\definecolor{darkblue}{rgb}{.8,.15,.15}
\definecolor{darkgreen}{rgb}{0.15,.4,.5}
\newcommand{\tetra}{\Delta'}
\renewcommand*{\backref}[1]{}
\renewcommand*{\backrefalt}[4]{%
\ifcase #1 %
 [Not cited]%
\or
 [Cited on page #2]%
\else
 [Cited on pages #2]%
\fi
}
\newtheorem{theorem}{Theorem}[section]
\newtheorem{corollary}[theorem]{Corollary}
\newtheorem{lemma}[theorem]{Lemma}
\newtheorem{proposition}[theorem]{Proposition}
\theoremstyle{definition}
\newtheorem{definition}[theorem]{Definition}
\newtheorem{example}[theorem]{Example}
\newtheorem{remark}[theorem]{Remark}
\numberwithin{theorem}{section}
\numberwithin{equation}{section}
\numberwithin{table}{section}
\newcommand{\R}{\mathbb{R}}
\newcommand{\Z}{\mathbb{Z}}
\newcommand{\N}{\mathbb{N}}
\newcommand{\conv}{\operatorname{conv}}
\title[Unimodular triangulations of dilated 3-polytopes]{
  Unimodular triangulations of dilated 3-polytopes}
\author[Santos]{Francisco Santos}
\address[Francisco Santos]{Facultad de Ciencias, Universidad de Cantabria, 
Av. de los Castros s/n, E-39005 Santander, Spain.}  
\email{francisco.santos@unican.es}
\thanks{Work of F. Santos is supported in part by 
 the Spanish Ministry of Science under grants MTM2011-22792 and 
by MICINN-ESF EUROCORES programme EuroGIGA -- ComPoSe -- IP04 (Project EUI-EURC-2011-4306). Part of this work was done while I was visiting FU Berlin in 2012 and 2013 supported by a Research Fellowship of the Alexander von Humboldt Foundation}
\author[Ziegler]{G\"unter M.~Ziegler}
\address[G\"unter M. Ziegler]{Inst.\ Mathematics, FU Berlin, Arnimallee 2, 14195 Berlin, Germany.} 
\email{ziegler@math.fu-berlin.de}
\thanks{Work of G. M. Ziegler is supported by the European Research
  Council under the European Union's Seventh Framework Programme (FP7/2007-2013)/\allowbreak ERC
  Grant agreement no.~247029-SDModels and by the DFG Research Center \textsc{Matheon} ``Mathematics
  for Key Technologies'' in Berlin.}
\date{April 26, 2013}
\begin{document}
\begin{abstract}
  A seminal result in the theory of toric varieties, due to Knudsen, Mumford and Waterman (1973), asserts that for every lattice polytope $P$ there is a positive integer $k$ such that the dilated polytope $kP$ has a unimodular triangulation. In dimension 3, Kantor and Sarkaria (2003) have shown that $k=4$ works for every polytope. But this does not imply that every $k>4$ works as well. We here study the values of $k$ for which the result holds showing that:
\begin{enumerate}
\item It contains all composite numbers.
\item It is an additive semigroup.
\end{enumerate}

These two properties imply that the only values of $k$ that may not work (besides $1$ and $2$, which are known not to work) are $k\in\{3,5,7,11\}$. With an ad-hoc construction we show that $k=7$ and $k=11$ also work, except in this case the triangulation cannot be guaranteed to be ``standard'' in the boundary. All in all,  the only open cases are $k=3$ and $k=5$.
\vspace{-3\baselineskip}
\end{abstract}
\maketitle

{\tiny
\tableofcontents}

\section{Introduction}

Let $\Lambda\subset \R^d$ be an affine lattice. A \emph{lattice polytope} (or an \emph{integral polytope}~\cite{KKMS3}) is a polytope $P$ with all its vertices in $\Lambda$. We are specially interested in lattice (full-dimensional) simplices. The vertices of a lattice simplex $\Delta$ are an affine basis for $\R^d$, hence they induce a sublattice $\Lambda_\Delta$ of $\Lambda$ of index equal to the \emph{normalized volume} of $\Delta$ with respect to $\Lambda$. A lattice simplex is \emph{unimodular} if its normalized volume is~$1$, that is, if its vertices are a lattice basis for $\Lambda$.

In many contexts it is interesting to triangulate a given lattice polytope $P$ into unimodular simplices, that is, to construct a \emph{unimodular triangulation} of $P$. This is not possible for all lattice simplices of dimension $d\ge3$, but there is the following important result of Knudsen, Mumford and Waterman~\cite{KKMS3} (a proof appears also in \cite{bgBook}):

\begin{theorem}[\protect{\cite{KKMS3}}]
\label{thm:KMW}
For every lattice polytope $P$ there is a constant $k$ such that the dilation $kP$ admits a unimodular triangulation.
\end{theorem}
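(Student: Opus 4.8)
The plan is to follow the classical strategy of Knudsen (in the combinatorial form developed by Bruns and Gubeladze): reduce to lattice simplices, and then use that a \emph{sufficiently large} dilate of any lattice simplex acquires non-vertex lattice points, which can be used to subdivide it into lattice simplices of strictly smaller normalized volume, iterating until all pieces are unimodular.

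\textbf{Step 1: reduction to simplices.} Fix any triangulation of $P$ into lattice simplices --- for instance a pulling triangulation with respect to the lattice points of $P$, or simply a triangulation using only the vertices of $P$, whose cells are automatically lattice simplices. It then suffices to find a single dilation factor $c$ together with, for each cell $\Delta$, a unimodular triangulation of $c\Delta$, arranged so that neighbouring cells receive matching triangulations along their common faces, so that the pieces glue to a global unimodular triangulation of $cP$. Compatibility is forced by making the triangulation induced on each dilated face depend only on the lattice geometry of that face, built up by induction on the dimension; and a single $c$ suffices because there are only finitely many cells, so it is enough that $c$ can be taken to depend only on $d$ and on an upper bound for the normalized volume of the simplices that occur.

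\textbf{Step 2: the subdivision engine.} The elementary fact driving everything is: if a lattice $d$-simplex $\Delta'$ contains a lattice point $p$ that is not one of its vertices, then pulling $\Delta'$ at $p$ (replacing it by the simplices $\conv(p,F)$ for facets $F$ of $\Delta'$ not through $p$) subdivides it into lattice simplices, each with normalized volume equal to that of $\Delta'$ scaled by the ratio of the lattice distance from $p$ to $\aff(F)$ over that of the opposite vertex; since $p$ is not a vertex, this ratio is $<1$ for every $F$, so every new cell has strictly smaller normalized volume. On the other hand, a lattice $d$-simplex need not contain any non-vertex lattice point --- already in dimension $3$ there are empty lattice simplices of arbitrarily large normalized volume (Reeve) --- so this move cannot be applied directly; but after a dilation by a factor depending only on $d$ it can, since, for instance, $(d+1)\Delta$ always contains $v_0+\cdots+v_d$ in its interior. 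Hence a bounded dilation lets one keep pulling until no non-vertex lattice points are left.

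\textbf{Step 3: the induction, and the main obstacle.} Because dilating multiplies normalized volume by $c^d$, the induction cannot be on total volume; instead one induces on the largest normalized volume $m$ occurring, exploiting that a unimodular simplex stays unimodularly triangulable under every further dilation (the dilated standard simplex carries its standard triangulation into unimodular simplices). The inductive step rests on a lemma of the shape: for every lattice $d$-simplex $\Delta$ with $\mu(\Delta)=m>1$ there is a dilation factor $c$ so that $c\Delta$ has a triangulation into lattice simplices of normalized volume $\le m-1$ agreeing on the boundary with the prescribed triangulation of $c(\partial\Delta)$; one then invokes the inductive hypothesis at level $m-1$ and composes the dilation factors. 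In low dimensions pulling at non-vertex lattice points (Step~2) already delivers this lemma, since there ``no non-vertex lattice points'' forces unimodularity; from dimension $3$ on, empty simplices of large volume must be handled by hand --- using their special structure --- and, moreover, the whole construction must be carried out so that the boundary triangulations match across \emph{all} cells of the initial triangulation of $P$. This twofold requirement --- the smaller-volume subdivision lemma in dimension $\ge 3$, and the global boundary compatibility --- is the technical heart and the step I expect to be the main obstacle; the remaining ingredients (an interior lattice point after a bounded dilation, the volume formula for the pulled cells, and termination of a strictly decreasing integer quantity) are routine.
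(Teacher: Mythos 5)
The paper does not prove this statement; it is quoted from Knudsen--Mumford--Waterman \cite{KKMS3} (with \cite{bgBook} cited for a modern account), so I am judging your proposal on its own terms. As a roadmap it is faithful to the classical strategy, but it is not a proof: the entire content of the theorem is concentrated in the lemma you state in Step~3 (``for every lattice $d$-simplex $\Delta$ with $\mu(\Delta)=m>1$ there is a dilation factor $c$ so that $c\Delta$ has a triangulation into lattice simplices of normalized volume $\le m-1$, agreeing on the boundary with a prescribed triangulation of $c(\partial\Delta)$''), and you explicitly leave it unproven, correctly flagging it as the main obstacle. The surrounding material (reduction to simplices, pulling at a non-vertex lattice point decreases volume, termination of a decreasing integer) is indeed routine, but it does not get you anywhere near the lemma: pulling $c\Delta$ at its lattice points terminates in a full triangulation whose cells are \emph{empty} lattice simplices, and from dimension $3$ on empty simplices have unbounded normalized volume (White/Reeve; this is the whole point of the present paper), so ``no non-vertex lattice points are left'' does not force unimodularity and the multiplicity need not drop below $m$ at all. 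Your Step~2 observation that $(d+1)\Delta$ contains $v_0+\cdots+v_d$ is true but useless for the induction: pulling there produces $d+1$ cells each of normalized volume $(d+1)^{d-1}m$, so relative to the ambient lattice nothing has improved.

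The missing idea, which is the actual engine of \cite{KKMS3}, is the use of \emph{box points}: if $\mu(\Delta)>1$, the half-open parallelepiped spanned by the (homogenized) vertices of $\Delta$ contains a nonzero lattice point $\sum\lambda_i v_i$ with $0\le\lambda_i<1$ and $\sum\lambda_i=h\in\{1,\dots,d\}$. Such a point does not in general lie in $\Delta$, but after a suitable dilation it determines lattice points inside every translated copy of $\Delta$ in the dilated complex, and one performs a \emph{stellar subdivision along an edge} of the whole complex simultaneously --- a global operation, not a cell-by-cell pulling --- which preserves the property of being a triangulation, keeps the boundary behaviour controlled, and strictly reduces the number of maximal-multiplicity boxes. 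Iterating this (with dilation factors composing multiplicatively, whence the enormous constant $k$) is what proves your key lemma. Without this construction, or some substitute for it, the proposal is an accurate table of contents for the proof rather than a proof.
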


The constant $k$ needed in the theorem can be arbitrarily large, as the following example shows:

\begin{example}
\label{exm:largek}
Let $\Delta=\conv\{\mathbf{0},e_1,\dots,e_d\}$ be the standard $d$-simplex in $\Z^d$, but consider it with respect to the lattice
$\Lambda=\Z^d\cup\big(\big(\frac{1}{2},\dots,\frac{1}{2}\big)+\Z^d\big)$, so that $\Delta$ has volume~$2$. For any $k<d/2$ we have $k\Delta\cap \Lambda =k\Delta\cap \Z^d$, which implies that $k\Delta$
contains no unimodular simplex with respect to $\Lambda$. In particular, it does not have a unimodular triangulation.
\end{example}

One question that is open, though, is whether a constant $k=k_d$ can be chosen that depends only on the dimension of~$P$. The above example provides a lower bound $k_d\ge\lceil d/2\rceil$ for this constant, in case it exists.
In this respect, Kantor and Sarkaria have shown the following.

\begin{theorem}[\protect{\cite{KantorSarkaria}}]
\label{thm:KS}
For every $3$-dimensional lattice polytope $P$ the dilation $4P$ admits a unimodular triangulation.
\end{theorem}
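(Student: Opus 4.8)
The plan is to reduce the assertion to the case of a single \emph{empty} lattice tetrahedron (one with no lattice points besides its vertices), settle that case by an explicit construction, and glue. For the reduction, fix a \emph{full} lattice triangulation $T$ of $P$, i.e.\ a lattice triangulation whose vertex set is all of $P\cap\Lambda$ (such triangulations exist, e.g.\ placing triangulations). Since every lattice point of $P$ is a vertex of $T$, no simplex of $T$ contains a lattice point other than its own vertices; in particular every triangle of $T$ is an empty lattice triangle, hence unimodular (area $\tfrac12$ by Pick's formula). Dilating, $\{4\Delta:\Delta\in T\}$ subdivides $4P$ into dilated empty tetrahedra, and every $2$-face shared by two of them, as well as every such $2$-face in $\partial(4P)$, is $4F$ for a unimodular triangle $F$ and hence carries a canonical \emph{standard} unimodular triangulation into $16$ triangles. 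Thus it suffices to prove: \emph{for every empty lattice tetrahedron $\Delta$, the dilation $4\Delta$ has a unimodular triangulation whose restriction to each of the four facets of $4\Delta$ is the standard triangulation} --- these triangulations of the pieces $4\Delta$ then agree on common faces and glue to a unimodular triangulation of $4P$.

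Next, normalize $\Delta$ via White's classification of empty lattice $3$-simplices (also attributed to Howe): such a $\Delta$ has lattice width $1$, so after a unimodular change of coordinates it lies in the slab $0\le z\le1$, with a primitive edge $\conv\{(0,0,0),(1,0,0)\}$ in the plane $\{z=0\}$ and the opposite primitive edge $\conv\{b,b+w\}$ in $\{z=1\}$, where $b$ is a lattice point of $\{z=1\}$ and $w$ is a primitive lattice vector parallel to $\{z=0\}$. Then $4\Delta$ lies in $0\le z\le4$, its four vertices lie only on $\{z=0\}$ and $\{z=4\}$, and the three interior lattice planes $z=1,2,3$ cut $4\Delta$ into four slabs $S_0,\dots,S_3$. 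Because $4\Delta$ is a tetrahedron with all vertices on $\{z=0\}\cup\{z=4\}$, each slab $S_t$ is exactly the convex hull of its two cross-sections $Q_t=4\Delta\cap\{z=t\}$ and $Q_{t+1}$; here $Q_0$ and $Q_4$ are lattice segments, $Q_1,Q_2,Q_3$ are lattice parallelograms with edge directions $(1,0)$ and $w$, and --- since lattice points have integer height --- no slab contains a lattice point off $Q_t\cup Q_{t+1}$. Moreover the four edges of each $Q_t$ ($t=1,2,3$) lie one in each facet of $4\Delta$, so they inherit the subdivision into primitive unit segments coming from the standard triangulations there.

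The heart of the proof is then to triangulate the slabs. First fix unimodular triangulations of the parallelograms $Q_1,Q_2,Q_3$ extending the prescribed primitive boundary subdivisions (always possible for a lattice polygon), and then fill each slab $S_t=\conv(Q_t\cup Q_{t+1})$ by tetrahedra that agree with the chosen triangulations of $Q_t$ and $Q_{t+1}$ and with the standard triangulations on the four facets of $4\Delta$. Because every lattice point of $S_t$ lies on one of its two bounding planes, every tetrahedron in such a triangulation has a triangular face on one plane and its apex on the other, and so it is unimodular \emph{if and only if} that triangular face has area $\tfrac12$. Hence the task is to interpolate, by ``upward'' and ``downward'' tetrahedra over unimodular triangles, between two unimodularly triangulated lattice parallelograms, with the end slabs $S_0,S_3$ being the degenerate case where one parallelogram collapses to a segment. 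I would do this by decomposing each slab further into standard shapes --- triangular prisms, which split into three unimodular tetrahedra once a linear order on the vertices is fixed, and wedges over segments --- and triangulating those consistently.

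The main obstacle is precisely this last step, which is where the ad-hoc construction lives. One must (i) choose the triangulations of $Q_1,Q_2,Q_3$ together with the standard triangulations of the four facets of $4\Delta$ so that they are mutually consistent along every shared edge, and (ii) carry out the slab decompositions so that every base triangle produced is unimodular, i.e.\ every tetrahedron has volume exactly $\tfrac16$. Verifying (ii) amounts to a finite but fiddly case analysis in terms of $w$ and $b$ reduced modulo the relevant side lengths $1,2,3$, and getting the global boundary bookkeeping to close up is the genuinely delicate part --- presumably also the reason the dilation factor has to be as large as $4$.
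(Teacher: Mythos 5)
Your reduction (triangulate $P$ into empty tetrahedra, demand standard boundary on each $4\Delta$, normalize $\Delta$ to width $1$ via White's theorem, and slice $4\Delta$ by the integer horizontal planes) is exactly the framework the paper uses. But there are two problems with what follows. First, a concrete error: it is not true that every tetrahedron in a triangulation of a slab $S_t=\conv(Q_t\cup Q_{t+1})$ has a triangular face on one bounding plane and its apex on the other. Since all vertices lie on the two planes, tetrahedra come in types $3{+}1$, $1{+}3$ and $2{+}2$, and the $2{+}2$ type is unavoidable (already a single triangular prism cannot be triangulated without one). These are precisely the tetrahedra where unimodularity can fail: a tetrahedron joining a primitive horizontal edge in direction $(1,0)$ at one height to an edge $\conv\{(a,a',\cdot),(b,b',\cdot)\}$ at the next height has normalized volume $|b'-a'|$, so the link of each bottom edge must be a monotone lattice path through \emph{all} the interior lattice points of the cross-section in their $y$-order (a ``maximal $Y$-path''), and symmetrically for the top edges. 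So the criterion ``unimodular iff the base triangle has area $\tfrac12$'' is not the right one, and the real constraints on the triangulations of $Q_1,Q_2,Q_3$ are much more rigid than ``any unimodular triangulation extending the boundary subdivision''.

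Second, and more importantly, the step you defer as ``a finite but fiddly case analysis'' is the entire content of the theorem, and the direct slab-filling you describe runs into a genuine obstruction rather than mere bookkeeping. The interior lattice points of each fundamental parallelogram form a $(q-1)\times(q-1)$ permutation-matrix pattern, and at the interface where the filling direction switches from $X$ to $Y$ one would need a single triangulation of that parallelogram containing both the maximal $X$-monotone and the maximal $Y$-monotone paths through these points; these two paths intersect improperly unless $p\equiv\pm1\pmod q$ (this is exactly why $2\Delta$ has no unimodular triangulation in general, and a direct layer construction for $k=4$ only yields a triangulation with a few non-standard boundary edges, which cannot be glued across facets of the triangulation of $P$). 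The missing idea is the factorization $4=2\cdot2$: one first triangulates $2\Delta$ with standard boundary into tetrahedra that are not unimodular but only \emph{tetragonal} (lattice-equivalent to some $\tetra(1,q')$), which is possible because suitably relaxed ``quasi-maximal'' $X$- and $Y$-paths, advancing one latitude or longitude at a time, can be made compatible inside one triangulation of the fundamental square; one then dilates by $2$ again and uses that the second dilation of a tetragonal tetrahedron does admit a unimodular triangulation with standard boundary. Without this (or some equivalent mechanism) the construction you outline cannot be completed.
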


The fact that $kP$ has a unimodular triangulation does not automatically imply that $k'P$ has one for every $k'>k$. The only general result in this direction is that this must hold whenever $k'$ is a multiple of $k$. In this paper we show the following.

\begin{theorem}
\label{thm:kP}
For every $3$-dimensional lattice polytope $P$ and every $k\ge 6$, $kP$ has a unimodular triangulation.
\end{theorem}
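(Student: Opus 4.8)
The plan is to derive Theorem~\ref{thm:kP} from three ingredients, each valid for every $3$-dimensional lattice polytope $P$: (I) if $k$ is composite, then $kP$ has a unimodular triangulation that is \emph{standard} on the boundary, i.e.\ restricts on every facet $G$ to the $k$-th dilate of one fixed, once-and-for-all chosen unimodular triangulation of the lattice polygon $G$; (II) the set $S^{\mathrm{std}}(P)$ of those $k\ge1$ for which $kP$ has such a boundary-standard unimodular triangulation --- a subset of the set $S(P)$ of all $k$ for which $kP$ has some unimodular triangulation --- is closed under addition; and (III) $7P$ and $11P$ have unimodular triangulations, but ones that cannot be guaranteed to be standard on the boundary. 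Granting these, Theorem~\ref{thm:kP} becomes elementary number theory. The numerical semigroup $\langle 4,6,9\rangle$ generated by $4$, $6$ and $9$ consists of all positive integers except $1,2,3,5,7,11$; since $4$, $6$ and $9$ are composite, (I) and (II) give $\langle 4,6,9\rangle\subseteq S^{\mathrm{std}}(P)\subseteq S(P)$, and adjoining $7$ and $11$ via (III) yields $S(P)\supseteq\N\setminus\{1,2,3,5\}$, so $kP$ has a unimodular triangulation for every $k\ge6$. (Concretely: a composite $k\ge6$ is covered by (I); $k\in\{7,11\}$ by (III); and a prime $k\ge13$ by writing $k=9+(k-9)$ with $k-9$ an even integer $\ge4$, hence composite, and invoking (I) and (II).)

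I would establish (II) by a nesting construction. Translate so that a vertex $v$ of $P$ sits at the origin; then $\mathbf{0}\in P$ and hence $k_1P\subseteq(k_1+k_2)P$. The closure of $(k_1+k_2)P\setminus k_1P$ decomposes, facet by facet, into the frustums $\conv\big(k_1G\cup(k_1+k_2)G\big)$, one over each facet $G$ of $P$ not containing $\mathbf{0}$. Refining each such $G$ into unimodular triangles (via the fixed boundary triangulation) splits the frustum over $G$ into the frustums over those triangles, which leaves the local problem: for a unimodular triangle $T$, unimodularly triangulate the frustum $\conv\big(k_1T\cup(k_1+k_2)T\big)$ --- the slice of the cone over $T$ from $\mathbf{0}$ between the parallel cross-sections $k_1T$ and $(k_1+k_2)T$ --- so that it restricts to the standard triangulation on each of the two triangular and the three trapezoidal faces. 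Since the standard triangulation of a dilated lattice polygon (or segment) is canonical, these local pieces automatically match along shared faces and along $\partial(k_1P)$, so together with the given boundary-standard triangulation of $k_1P$ they assemble into a boundary-standard unimodular triangulation of $(k_1+k_2)P$. Producing the triangulation of the single frustum --- whose shape is controlled by the normal form of the apex tetrahedron $\conv(\{\mathbf{0}\}\cup T)$ --- is the technical heart of this step.

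For (I) one starts from $4\in S^{\mathrm{std}}(P)$: this is Theorem~\ref{thm:KS}, once one checks that the Kantor--Sarkaria triangulation of $4P$ is, or can be modified near $\partial P$ to be, standard on the boundary. By (II) it then suffices to add $6$ and $9$ to $S^{\mathrm{std}}(P)$, since $\langle 4,6,9\rangle$ already contains every composite number. Reducing through a lattice triangulation of $P$ into tetrahedra, this follows in turn from boundary-standard unimodular triangulations of $6\Delta$ and $9\Delta$ for an arbitrary lattice $3$-simplex $\Delta$; here one exploits that, up to lattice equivalence, a lattice $3$-simplex is captured by a short normal form (such as $\conv\{\mathbf{0},e_1,e_2,(p_1,p_2,q)\}$), so that the $6$-fold and $9$-fold dilations leave enough lattice points for an explicit case-by-case triangulation. (A uniform treatment of all composite $k$, if one is available, would subsume this.)

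Finally, $k=7$ and $k=11$ in (III) I would handle ad hoc: nest $4P$, respectively $8P$, inside $7P$, respectively $11P$, and triangulate the remaining ``width-$3$'' shell by hand, matching the standard triangulation of the inner polytope along the inner boundary. A width-$3$ shell cannot in general be triangulated with the standard triangulation on \emph{both} of its boundary spheres, which is exactly why this does not give $7,11\in S^{\mathrm{std}}(P)$ and why $7$ and $11$ do not arise from the semigroup in (II). The main obstacles I anticipate are, first, the explicit small-factor constructions --- the ad-hoc cases $k=7,11$, and also $k=6,9$ --- which do not follow from soft general arguments and require detailed control of dilated lattice $3$-simplices; and second, the unimodular triangulation of the frustums in (II), on which the entire semigroup property rests.
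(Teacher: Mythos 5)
Your arithmetic skeleton (composite $k$ with standard boundary, additivity of the set of good $k$, ad-hoc treatment of $7$ and $11$) is exactly the paper's, and the numerics via $\langle 4,6,9\rangle$ are fine. But the three geometric ingredients you defer are precisely where all the difficulty lives, and at least one of them, as you set it up, has a genuine gap. Your version of (II) claims additivity for an arbitrary lattice $3$-polytope $P$ by nesting $k_1P\subseteq(k_1+k_2)P$ at a vertex and reducing to frustums $\conv\bigl(k_1T\cup(k_1+k_2)T\bigr)$ over unimodular triangles $T$ in the facets. The apex tetrahedron $\conv(\{\mathbf{0}\}\cup T)$ here is an \emph{arbitrary} lattice tetrahedron --- of arbitrarily large normalized volume, in no way empty --- so its dilation slabs carry lattice-point patterns for which no classification is available; unimodularly triangulating such a frustum with prescribed boundary is essentially the original problem again, not a ``technical heart'' one can expect to fill in routinely. (There is also a smaller unaddressed matching issue: adjacent frustums share lattice trapezoids $\conv\bigl(k_1e\cup(k_1+k_2)e\bigr)$, which have no canonical ``standard'' triangulation, so the asserted automatic compatibility needs an actual choice and argument.) The paper avoids all of this by proving additivity only for an empty tetrahedron $\tetra(p,q)$ in White normal form (Theorem~\ref{thm:k1+k2}): there the width-one structure lets one cut the two corner copies along the $z$-levels and fill the remainder with toblerone prisms triangulated by maximal X-paths, and the standard-boundary condition is what lets these per-tetrahedron triangulations glue across a triangulation of $P$ into empty tetrahedra. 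Additivity for general $P$ is then a corollary of the per-tetrahedron statement, not the other way around.

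The other two ingredients are also replaced by deferred constructions that hide real content. For (I), the paper does not prove the composite case by explicit triangulations of $6\Delta$ and $9\Delta$ (note that a ``case-by-case'' analysis over the normal forms $\Delta(p,q)$ is an infinite family); it uses Kantor--Sarkaria's quasi-maximal paths to triangulate $k_1\tetra(p,q)$ into \emph{tetragonal} empty tetrahedra with standard boundary for every $k_1$, and then dilates by $k_2$ using the fact that all dilates of tetragonal simplices have standard-boundary unimodular triangulations (Corollaries~\ref{coro:k=2} and~\ref{coro:tetragonal}); this handles every composite $k=k_1k_2$ uniformly. For (III), the paper's mechanism is not a hand-triangulated width-$3$ shell over $P$ (which would again hit the frustum problem) but a \emph{quasi-standard} boundary triangulation of $k\Delta$ for each empty $\Delta$ and each $k\ge7$: since that boundary triangulation has the full symmetry of the triangle and is the same on every facet, the per-tetrahedron pieces still glue across $kP$ even though they are not standard. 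So while your outline reproduces the theorem's logical structure, the proposal as written does not contain a workable proof of any of the three geometric steps, and the general-$P$ nesting in (II) is the step I would expect to fail outright.
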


This result, as well as the one from~\cite{KantorSarkaria}, is based on understanding \emph{empty} lattice simplices in dimension 3. A lattice simplex is called \emph{empty} if the only lattice points it contains are its vertices.\footnote{Kantor and Sarkaria~\cite{KantorSarkaria} use ``primitive'' and ``elementary'' for unimodular and empty, respectively. Scarf~\cite{Scarf} uses ``integral'' for empty.} 
Every unimodular simplex is empty, but the converse fails for empty simplices in all dimensions $d\ge3$, as exemplified by
$\conv\{e_1,\dots,e_d,\mathbf{1}\}$
 (which for $d=3$ yields an empty regular tetrahedron of normalized volume $2$ inscribed in the unit cube).%

A relation between empty and unimodular simplices comes from the following natural idea on how to triangulate $kP$. Every lattice $3$-polytope $P$ can trivially be triangulated into empty tetrahedra, so $kP$ can be triangulated into $k$-th dilations of empty tetrahedra. If, for a certain $k$, we have that every $k$-th dilation of an empty tetrahedron has a unimodular triangulation, then $kP$ has a decomposition into unimodular simplices that do not overlap one another. If, moreover, the triangulations of the individual dilated tetrahedra are made to agree in the facets they have in common, then we have a unimodular triangulation of $kP$. The simplest way to achieve this agreement of the individual triangulations is to make them \emph{standard on the boundary} in the following sense:

\begin{definition}
\label{defi:standard}
Let $\Delta\subset \R^3$ be an empty lattice tetrahedron and let $T$ be a lattice triangulation of $k\Delta$, for some $k$. We say that $T$ is \emph{standard on the boundary} if all the boundary edges in $T$ are parallel to edges of $\Delta$.
\end{definition}

This property uniquely defines the triangulation $T|_{\delta(\Delta)}$: Each facet of $T$ is an empty (hence unimodular) triangle $F$ in a $2$-dimensional lattice, so that $kF$ has a unique unimodular triangulation using only edges parallel to those of $F$ (see Figure~\ref{fig:standard}).
\begin{figure}
\includegraphics[scale=.5]{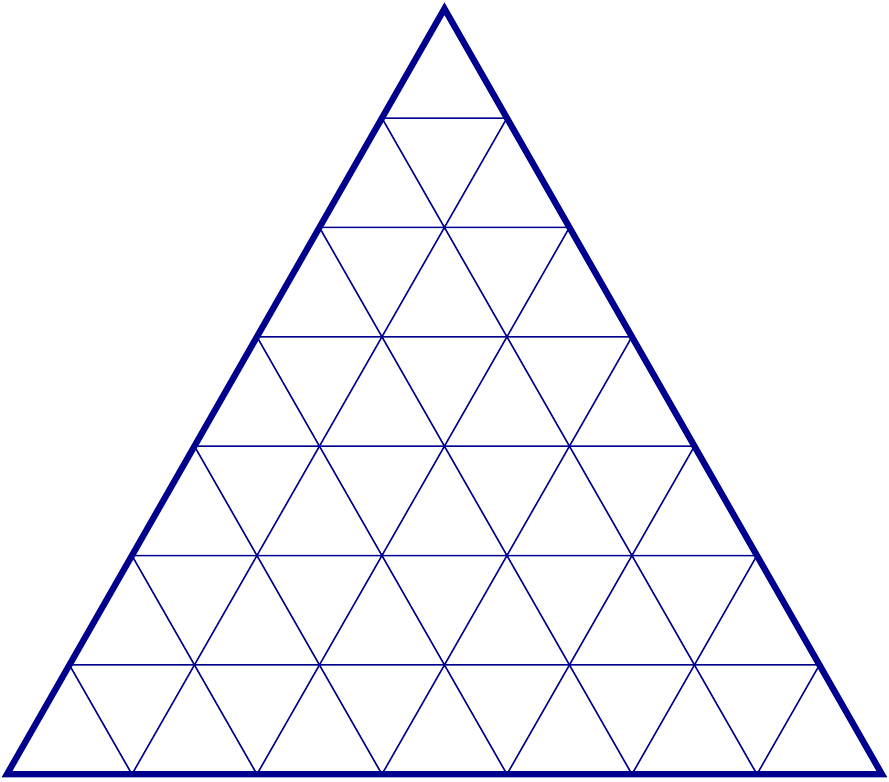}
\caption{Standard triangulation of a dilated triangle, for $k=7$.}
\label{fig:standard}
\end{figure}

With this in mind we can now be more precise about the statements in this paper:
\begin{compactitem}[ $\bullet$ ]
\item In Section~\ref{sec:k=2} we establish a characterization of the empty tetrahedra $\Delta$ for which $2\Delta$ admits a unimodular triangulation.  
For all of these the unimodular triangulation can be chosen to have standard boundary.
This result appears also in~\cite{KantorSarkaria}, but we repeat the proof since the techniques that appear in it will be useful later.

\item In Section~\ref{sec:non-standard} we show that $k\Delta$ has a unimodular triangulation for every empty tetrahedron $\Delta$ and every $k\ge4$. However, the triangulation we construct is not, in general, standard in the boundary.

\item In Section~\ref{sec:standard} we extend Kantor and Sarkaria's technique to apply to $k\Delta$ for every composite $k$; that is, for every empty tetrahedron $\Delta$ and every composite positive integer $k$ we construct a unimodular triangulation of $k\Delta$ with standard boundary. We also show that whenever $k_1\Delta$ and~$k_2\Delta$ have unimodular triangulations with standard boundary then so does $(k_1+k_2)\Delta$. This implies Theorem~\ref{thm:kP}, except for the cases $k=7$ and $k=11$.

\item In Section~\ref{sec:semi-standard} we define the concept of triangulation with \emph{semi-standard boundary} and show that $k\Delta$ has a unimodular triangulation with semi-standard boundary for every $k\ge 7$ and every empty tetrahedron $\Delta$. 
This finishes the proof of Theorem~\ref{thm:kP}.
\end{compactitem}

Some comments on the history of this paper are in order. The results in Sections~\ref{sec:k=2} and~\ref{sec:non-standard} were 
found by the second author in 1997 (based on discussions with Jeff Lagarias) 
during the special year on \emph{Combinatorics} at the Mathematical Sciences Research 
Institute (Berkeley, California). They were presented in several lectures (in Berkeley, New York, Oberwolfach, and Barcelona) that year.
Our Figures~\ref{fig:square_5_13},~\ref{fig:max-paths},~\ref{fig:half-2Delta},~\ref{fig:layers},~\ref{fig:toblerone}, and~\ref{fig:alternating} are reproduced from the overhead
slides that were used for those talks. The results of Sections~\ref{sec:standard} and~\ref{sec:semi-standard} were found by the first author in 2009 
during the workshops  \emph{Combinatorial Challenges in Toric Varieties} (American Institute of Mathematics, Palo Alto, California) 
and \emph{Combinatorial Geometry} (Institute for Pure and Applied Math, Los Angeles, California). We thank the three institutions 
and the organizers of these events for their support and the excellent scientific environments they provided.

\section{Classification of $3$-dimensional empty simplices}
\label{sec:prelim}

For any polytope $P\in \R^d$ and a linear functional $f:\R^d\to \R$, the \emph{width} of~$P$ with respect to $f$ is the difference between the maximum and minimum values taken by $f$ on $P$. If $P$ is a lattice polytope we are interested in $f$ 
belonging to the dual lattice, in which case the width of $P$ with respect to $f$ is an integer.
Thus, if we speak of width in the following, we always refer to \emph{lattice width}, which is the minimal width of the
lattice polytope with respect to any non-constant dual lattice functional.

All the results in this paper use the classification of $3$-dimensional empty simplices by White \cite{White}. But, more than that, they use in an essential way the property that every empty lattice $3$-simplex has width one (from which the classification is easy to derive, as we show below).

For any full-dimensional lattice polytope $P$ the minimum possible width is~$1$, which occurs if and only if all the vertices of $P$ lie in two consecutive $f$-constant lattice hyperplanes. In fact, we can also speak about the \emph{width with respect to a lattice hyperplane $H$}, by which we mean width with respect to the unique (modulo sign) primitive linear functional that is constant on~$H$. Similarly, the following theorem speaks of width of a tetrahedron with respect to two opposite edges, meaning ``with respect to the unique primitive functional that is constant on both edges.''
The theorem does not extend to higher dimensions.  
For example, Seb\H{o}~\cite{Sebo} has shown that in all dimensions $d\ge4$ there are empty simplices that have width at least $d-2$ with respect to any lattice hyperplane.

\begin{theorem}
\label{thm:width-one} 
Every empty lattice tetrahedron has width one with respect to (at least) one of its three pairs of opposite edges.
\end{theorem}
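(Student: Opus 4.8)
The plan is to use the classification of empty lattice tetrahedra in terms of their normalized volume and combinatorial type, but in fact to argue more directly. Recall that an empty lattice tetrahedron $\Delta$ of normalized volume $q$ induces a sublattice $\Lambda_\Delta$ of index $q$, and the quotient $\Lambda/\Lambda_\Delta$ is a cyclic group $\Z/q$ (this cyclicity is the crucial structural fact; it follows from $\Delta$ being empty). Fixing one vertex $v_0$ as origin, the other three vertices $v_1,v_2,v_3$ form a basis of $\Lambda_\Delta$, and there is a lattice point $w\in\Lambda$ whose class generates $\Lambda/\Lambda_\Delta$; writing $w\equiv \tfrac{1}{q}(a_1 v_1 + a_2 v_2 + a_3 v_3) \pmod{\Lambda_\Delta}$ with $a_i\in\{0,1,\dots,q-1\}$, one may normalize so that $a_1+a_2+a_3+a_0\equiv 0$ and, after White's normalization, two of the $a_i$ can be taken to be $1$ (say $a_1=1$), i.e. up to lattice isomorphism $\Delta = T(p,q) := \conv\{0, e_1, e_2, p e_1 + q e_2 + q e_3\}$ for suitable $p$ coprime to $q$. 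I would cite White's theorem for this normal form.

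From the normal form the proof is then a short computation. First I would handle the degenerate cases $q=1$ (unimodular, width one with respect to every pair of opposite edges) separately. For $q\ge 2$, consider the pair of opposite edges $E_1=\conv\{0,e_1\}$ and $E_2=\conv\{e_2,\,pe_1+qe_2+qe_3\}$. The functional that is constant on both of these edges is, up to sign, $f = e_3^{*}$ — it vanishes on $0$, $e_1$, $e_2$ and takes value $q$ on the apex. That's width $q$, not one, so this naive choice is wrong; the real content is that among the \emph{three} pairings one can always do better. So instead I would look for a primitive functional $f$ with $f(v_i)\in\{0,1\}$ for all four vertices, which is equivalent to finding a lattice hyperplane separating the vertices into two groups lying in consecutive parallel planes, with the additional requirement that within each group the vertices span an edge. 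The key claim to verify is: for $T(p,q)$ there is always such an $f$, and the two vertices on which $f$ agrees (on each side) form a genuine edge of $\Delta$ — automatic here since every pair of vertices of a tetrahedron spans an edge. Concretely, using $\gcd(p,q)=1$ pick integers so that the functional $f=(\text{something like } e_3^* \text{ adjusted by a multiple of } e_1^*,e_2^*)$... — more cleanly, I expect that after the White normalization one of the three coordinate-type functionals, or a small explicit modification, already takes only values $0$ and $1$ on the four vertices, and I would simply exhibit it.

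I expect the main obstacle to be organizational rather than deep: matching up "width one with respect to a \emph{pair of opposite edges}" with the plain statement "width one" requires checking that the width-one functional $f$ one produces is constant on two \emph{disjoint} pairs of vertices (one pair per value $0$ and $1$), and that these two pairs are opposite edges rather than sharing a vertex — but in a tetrahedron any $2$--$2$ split of the four vertices does give a pair of opposite edges, so this is free, and the whole difficulty collapses to producing the functional. The cleanest route, and the one I would actually write, is to bypass White entirely: prove the cyclic-quotient fact directly (if $\Lambda/\Lambda_\Delta$ were not cyclic it would contain $(\Z/p)^2$ for some prime $p$, forcing a nontrivial lattice point in $\Delta$ by a pigeonhole/averaging argument on the $p^2$ cosets inside the fundamental parallelepiped), then from cyclicity deduce that the generator $w$ can be chosen with barycentric-type coordinates $(0,1,a,b)/q$, and finally observe that the affine functional sending $v_0,v_1\mapsto 0$ and $v_2,v_3\mapsto 1$ — i.e. the one detecting the "$1$" in the $e_1$-slot, which is well-defined over $\Lambda$ precisely because that coordinate of $w$ is the integer $1/q\cdot q$... — realizes width one on the opposite edges $\{v_0v_1\}$, $\{v_2v_3\}$. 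The remark about Sebő's examples signals that cyclicity of the quotient is exactly the dimension-$3$ phenomenon being exploited, so I would foreground that lemma as the heart of the argument.
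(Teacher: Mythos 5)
The first thing to say is that the paper does not prove this theorem at all: it is quoted from White (1964) (with Scarf's attribution to Howe, and Reznick cited for a written proof), and the classification of Corollary~\ref{coro:classification} is then \emph{derived from} it. So the first route you describe --- invoking ``White's normal form'' or ``White's normalization'' --- is not a proof but a restatement of the theorem under another name, and within this paper's logical structure it is circular, since the normal form is obtained as a consequence of the width-one statement. Citing White outright, as the paper does, would be legitimate; half-citing him while presenting the argument as a proof is not.

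Your proposed self-contained route has a genuine gap at exactly its load-bearing step. Granting that $\Lambda/\Lambda_\Delta$ is cyclic of order $q$ (your pigeonhole sketch for this is itself not carried out, but let that pass), the passage to ``a generator can be normalized to have barycentric-type coordinates $(0,1,a,b)/q$,'' i.e.\ to have one coordinate equal to $1/q$, \emph{is} White's theorem; you assert it rather than prove it, and it does not follow from cyclicity. Indeed, an empty simplex of prime normalized volume automatically has cyclic quotient group, yet in dimension $4$ there are such simplices (e.g.\ of determinant $101$) of width greater than one, and Seb\H{o}'s examples quoted in the paper show width can grow with the dimension. So cyclicity is not ``the dimension-$3$ phenomenon being exploited''; the actual content is the existence of a generator with a coordinate equal to $1$, and that is what a proof must establish (White's and Reznick's arguments do this by a case analysis on the $q$ residues in the fundamental parallelepiped, using emptiness to exclude everything but $\pm1$). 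A symptom of the same confusion is that the normal form you write down, $T(p,q)=\conv\{0,e_1,e_2,\,pe_1+qe_2+qe_3\}$, does not visibly have width one with respect to a pair of opposite edges --- a short check shows that only the split $\{0,e_2\}$ versus $\{e_1,\text{apex}\}$ can ever realize width one, and only when $p\equiv 1\pmod q$ --- whereas the form in which width one is manifest is the paper's $\Delta(p,q)$ with fourth vertex $(p,q,1)$, where the third coordinate does the job. The one part of your write-up that is correct and genuinely free is the observation that any $2$--$2$ split of the four vertices of a tetrahedron is automatically a pair of opposite edges.
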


This has the following interpretation, which is implicitly used in what follows: 
Consider $\R^3$ tiled by translated copies of a  
tetrahedron and an octahedron, in the natural way (this is, for example, the Delaunay tiling of the face centered cubic lattice). Let $\Lambda$ be a lattice that contains the vertices of this tiling. Theorem~\ref{thm:width-one} says that if the tetrahedra are empty with respect to $\Lambda$ then the lattice points in each octahedron $P$ lie all (except of course for two of the vertices) in one of the three quadrilaterals formed with the vertices of $P$.

Theorem~\ref{thm:width-one} and the corollary below were first proved by White in 1964 \cite{White}, although Scarf~\cite{Scarf} attributes it to Howe (unpublished). A proof, and an extension, can also be found in~\cite{Reznick}. The tetrahedra that appear in the classification had been previously constructed by Reeve~\cite{Reeve}.

\begin{corollary}
\label{coro:classification}
Let $\Delta$ be an empty lattice tetrahedron in $\R^3$ of (normalized) volume $q>1$. Then, $\Delta$ is unimodularly equivalent to the following tetrahedron, for some $p\in\{1,2,\dots,q-1\}$:
\[
\Delta(p,q):=\conv\{(0,0,0), (1,0,0), (0,0,1), (p,q,1)\}
\]
Moreover,
\begin{enumerate}[ \rm(1) ]
\item $\gcd(p,q)=1$, and
\item $\Delta(p,q)\cong \Delta(p',q)$ if and only if $p'\equiv\pm p^{\pm1} \pmod q$.
\end{enumerate}
\end{corollary}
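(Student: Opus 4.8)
The plan is to use Theorem~\ref{thm:width-one} to reduce $\Delta$ to the stated normal form, and then to determine the unimodular symmetries of $\Delta(p,q)$ by an explicit, essentially finite, calculation.

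\emph{Normal form and item (1).} By Theorem~\ref{thm:width-one} the tetrahedron $\Delta$ has lattice width one with respect to some pair of opposite edges; composing with a unimodular transformation I may assume that the associated primitive functional is the third coordinate $z$, so that two vertices of $\Delta$ lie in $\{z=0\}$ and two in $\{z=1\}$. As $\Delta$ is empty each of its edges is primitive (a non-primitive edge would contain a non-vertex lattice point of $\Delta$), so the two bottom vertices span a primitive segment in the lattice plane $\{z=0\}$; a unimodular transformation preserving $z$ --- a unimodular map of that plane extended by the identity in the $z$-direction, composed with a translation --- brings them to $A=(0,0,0)$ and $B=(1,0,0)$. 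Writing the top vertices as $(x_i,y_i,1)$, the determinant formula for the volume gives $q=|y_2-y_3|$, so after relabelling I may assume $y_3-y_2=q$. The shears $(x,y,z)\mapsto(x,y+cz,z)$ and $(x,y,z)\mapsto(x+bz,y,z)$ both fix $A$ and $B$ and preserve $z$; a suitable composition normalizes the top edge to run from $C=(0,0,1)$ to $D=(p,q,1)$, and the shear $(x,y,z)\mapsto(x+ay,y,z)$ (which also fixes $A,B,C$) lets me reduce $p$ modulo $q$ into $\{0,1,\dots,q-1\}$. Thus $\Delta\cong\Delta(p,q)$. Finally the edge $CD$ has direction $(p,q,0)$, and primitivity of this edge forces $\gcd(p,q)=1$; in particular $p\neq0$, so $p\in\{1,\dots,q-1\}$.

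\emph{The ``if'' direction of item (2).} It suffices to realize the generators $p\mapsto -p$ and $p\mapsto -p^{-1}$ of the group $\{\pm p^{\pm1}\}$ by unimodular equivalences. The reflection $(x,y,z)\mapsto(x,-y,z)$, followed by the renormalizing shears from the previous step, sends $\Delta(p,q)$ to $\Delta(q-p,q)$, which gives $p\mapsto -p$. The level-swap $(x,y,z)\mapsto(x,y,1-z)$ interchanges the bottom edge $AB$ with the top edge $CD$; since $CD$ has primitive direction $(p,q,0)$ there is an $\mathrm{SL}_2(\Z)$-matrix (extended by the identity in $z$) carrying it to the standard bottom edge, and choosing this matrix so that its first row $(s,t)$ satisfies $sp+tq=1$ one checks, after a final renormalization, that the image is $\Delta(-p^{-1}\bmod q,\,q)$; this gives $p\mapsto -p^{-1}$. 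These two commuting involutions generate all of $p\mapsto\pm p^{\pm1}$, so $\Delta(p,q)\cong\Delta(p',q)$ whenever $p'\equiv\pm p^{\pm1}\pmod q$.

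\emph{The ``only if'' direction.} Let $\phi\colon\Delta(p,q)\to\Delta(p',q)$ be a unimodular equivalence. Since $\phi$ preserves lattice widths it sends the width-one pairs of opposite edges of $\Delta(p,q)$ onto those of $\Delta(p',q)$, so the key point is a width computation: in $\Delta(p,q)$ the pair $\{AB,CD\}$ always has width one, whereas the pairs $\{AC,BD\}$ and $\{AD,BC\}$ have widths $q/\gcd(p-1,q)$ and $q/\gcd(p+1,q)$. Hence for $p\notin\{1,q-1\}$ the pair $\{AB,CD\}$ is the \emph{unique} width-one pair, and likewise in $\Delta(p',q)$, so $\phi$ must carry the one to the other. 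Comparing $z$ with $z\circ\phi$ on $\Delta$ then shows $\phi$ either preserves $z$ or satisfies $z\circ\phi=1-z$; the second case reduces to the first by composing $\phi$ with the level-swap equivalence above, replacing $p$ by $-p^{-1}$. When $\phi$ preserves $z$ it has the form $v\mapsto Lv+t$ with $L=\bigl(\begin{smallmatrix}a&b&c\\d&e&f\\0&0&1\end{smallmatrix}\bigr)\in\mathrm{GL}_3(\Z)$ and $t=(t_1,t_2,0)\in\Z^3$; matching the images of $A,B,C,D$ with the vertices of $\Delta(p',q)$ pins down $L$ and $t$ up to a handful of sub-cases, each of which gives $p'\equiv\pm p\pmod q$. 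Combined with the level-swap reduction this yields $p'\equiv\pm p^{\pm1}\pmod q$. There remains the case $p\in\{1,q-1\}$: then $\Delta(p,q)\cong\Delta(1,q)$ by the ``if'' direction, and $\Delta(1,q)$ has (at least) two width-one opposite-edge pairs; since the number of such pairs is a unimodular invariant, $\Delta(p,q)\cong\Delta(p',q)$ forces $p'\in\{1,q-1\}$ as well, which is the assertion in this case.

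\emph{Main obstacle.} The normalization is routine once Theorem~\ref{thm:width-one} is available; the real content, and the only genuine subtlety, lies in item (2). The subtlety is that the minimal-width direction of $\Delta(p,q)$ is \emph{not} unique --- this happens exactly when $p\equiv\pm1\pmod q$ --- so the clean principle ``$\phi$ respects the distinguished opposite-edge pair'' applies only generically, and the boundary cases $p\in\{1,q-1\}$ have to be handled separately by the invariance count above. The finite matrix case-check is elementary but must be organized so that all sign and orientation choices are covered.
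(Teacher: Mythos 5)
Your proposal is correct and follows essentially the same route as the paper: Theorem~\ref{thm:width-one} puts the two bottom vertices at $z=0$ and the two top vertices at $z=1$, shears normalize the coordinates, and primitivity of the edge $cd$ gives $\gcd(p,q)=1$. For part~(2) the paper simply declares the claim ``easy to show,'' so your explicit argument --- realizing $p\mapsto -p$ and $p\mapsto -p^{-1}$ by a reflection and a level swap, and for necessity computing the widths $q/\gcd(p\mp1,q)$ of the other two opposite-edge pairs to isolate the distinguished pair (with the correct separate treatment of $p\equiv\pm1$) --- is a welcome and sound filling-in of that omitted detail.
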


\begin{proof}
Let $a$, $b$, $c$ and $d$ be the vertices of $\Delta$. Our assumption is that the edges $ab$ and $cd$ lie in consecutive lattice hyperplanes, and there is no loss of generality in assuming that these are the hyperplanes $z=0$ and $z=1$. Also, since the edge $ab$ is primitive, we can assume $a=(0,0,0)$ and $b=(1,0,0)$. Now, every integer translation of the plane $z=1$ extends to a unimodular isomorphism that restricts to the identity on $z=0$. So, we can also assume $c=(0,0,1)$ and call $p$ and $q$ the first and second coordinates of $d$. By a reflection on the $y=0$ plane we assume $q>0$. Finally, if $p\not\in[0,q]$ we consider the transformation $(x,y,z)\mapsto(x-\left \lfloor
\frac{p}{q}\right\rfloor y,y,z)$ to the whole tetrahedron, which fixes $a$, $b$ and $c$.

If $\gcd(p,q)\ne 1$, then the edge $cd$ is not primitive, hence $\Delta$ is not empty. That $p'\equiv\pm p^{\pm1} \pmod q$ is necessary and sufficient for $\Delta(p,q)\cong \Delta(p',q)$ is easy to show.
\end{proof}

If we extend the family of canonical tetrahedra to include $\Delta(0,1)$ (which is unimodular), the corollary gives a full set of representatives for the empty lattices $3$-simplices modulo unimodular equivalence. The representatives are not unique, however, as documented by part~(2) of the corollary.

It is now easy to describe the set of lattice points in a dilated standard simplex $k \Delta(p,q)$. For what follows, it is convenient to do so with respect to a more intrinsic system of coordinates.

\begin{lemma}
\label{lemma:lambda_pq}
Let $\ell_{p,q}:\R^3\to \R^3$ be the linear map 
\[
(x,y,z)\mapsto \left(qx -py , y ,z\right).
\]
Then
\begin{enumerate}[ \rm(1) ]
\item $\ell_{p,q}(\Delta(p,q))$ is the \emph{$q$-th right-angled tetrahedron}  
\[
\conv\{(0,0,0), (q,0,0), (0,0,1),(0,q,1)\};
\]
\item $\ell_{p,q}(\Z^3)$ is the lattice 
\[
\Lambda_{p,q}:=q \Z\times q\Z \times \Z + \Z \left\langle \left({p'}, 1,0\right) \right\rangle = q \Z\times q\Z \times \Z + \Z \left\langle \left(1, {p''},0 \right) \right\rangle,
\] 
where $p' = -p \bmod q$ and $p'' = -p^{-1} \bmod q$.
\end{enumerate}
\end{lemma}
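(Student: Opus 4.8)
The plan is to prove both parts by direct computation, exploiting that $\ell_{p,q}$ is linear, so that everything reduces to tracking a handful of generators.

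For part~(1), I would simply evaluate $\ell_{p,q}$ on the four vertices of $\Delta(p,q)=\conv\{(0,0,0),(1,0,0),(0,0,1),(p,q,1)\}$. One gets $(0,0,0)\mapsto(0,0,0)$, $(1,0,0)\mapsto(q,0,0)$, $(0,0,1)\mapsto(0,0,1)$, and $(p,q,1)\mapsto(qp-pq,q,1)=(0,q,1)$. Since $\ell_{p,q}$ is linear, it carries the convex hull of these points to the convex hull of their images, which is precisely the $q$-th right-angled tetrahedron. (The hypothesis $\gcd(p,q)=1$ from Corollary~\ref{coro:classification} is used only implicitly here, to justify restricting attention to the canonical form $\Delta(p,q)$; part~(1) as an identity holds for all $p,q$.)

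For part~(2), the image $\ell_{p,q}(\Z^3)$ is the $\Z$-span of $\ell_{p,q}(e_1)=(q,0,0)$, $\ell_{p,q}(e_2)=(-p,1,0)$ and $\ell_{p,q}(e_3)=(0,0,1)$, and the argument is a change of generators. First, since $p'\equiv -p\pmod q$, the vector $(-p,1,0)$ differs from $(p',1,0)$ by an integer multiple of $(q,0,0)$, so we may replace one by the other and obtain $\ell_{p,q}(\Z^3)=\Z\langle(q,0,0),(p',1,0),(0,0,1)\rangle$. Next I would verify the two inclusions against $\Lambda_{p,q}=q\Z\times q\Z\times\Z+\Z\langle(p',1,0)\rangle$: the generators $(q,0,0)$ and $(0,0,1)$ lie in $q\Z\times q\Z\times\Z$ and $(p',1,0)$ generates the second summand, which gives ``$\subseteq$''; conversely $(0,q,0)=q(p',1,0)-p'(q,0,0)$ together with $(q,0,0)$ and $(0,0,1)$ generate $q\Z\times q\Z\times\Z$, which gives ``$\supseteq$''. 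Finally, for the equality of the two presentations of $\Lambda_{p,q}$, observe that $p'p''\equiv(-p)(-p^{-1})\equiv 1\pmod q$ (using $\gcd(p,q)=1$), so modulo $q\Z\times q\Z\times\{0\}$ the vectors $p''(p',1,0)$ and $(1,p'',0)$ agree, and symmetrically $p'(1,p'',0)$ and $(p',1,0)$ agree; hence $q\Z\times q\Z\times\Z+\Z\langle(p',1,0)\rangle=q\Z\times q\Z\times\Z+\Z\langle(1,p'',0)\rangle$.

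I do not expect a genuine obstacle in this lemma: it is bookkeeping with integer linear combinations of lattice generators, and as a sanity check $\det\ell_{p,q}=q$ matches the normalized volume of $\Delta(p,q)$. The one point that deserves a moment's care is the last equality of the two presentations of $\Lambda_{p,q}$, which rests entirely on the fact that $p'$ and $p''$ are mutually inverse modulo~$q$.
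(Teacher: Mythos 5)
Your proof is correct; the only difference from the paper is in how part~(2) is closed off. The paper's proof also starts from explicit evaluations, but it finishes with an index argument: it notes that $\ell_{p,q}$ has determinant $q$ (triangular matrix), so $\ell_{p,q}(\Z^3)$ is a lattice of determinant $q$; it then exhibits preimages showing that $\ell_{p,q}(\Z^3)$ contains $q\Z\times q\Z\times\Z$ together with both $(p',1,0)$ and $(1,p'',0)$ (namely $\ell_{p,q}(1,1,0)=(p',1,0)$ and $\ell_{p,q}\bigl(\tfrac{1+pp''}{q},p'',0\bigr)=(1,p'',0)$), and concludes equality because a containment of lattices of equal determinant must be an equality. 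You instead compute $\ell_{p,q}(\Z^3)$ as the span of the images of the standard basis and verify both inclusions by hand, using the relation $(0,q,0)=q(p',1,0)-p'(q,0,0)$ for the reverse direction and $p'p''\equiv 1\pmod q$ to identify the two presentations of $\Lambda_{p,q}$. Your route is marginally longer but entirely self-contained; the paper's determinant argument saves the reverse inclusion at the cost of implicitly using that $\Lambda_{p,q}$ itself has determinant $q$ (which rests on $\gcd(p',q)=1$, i.e.\ on the same coprimality you invoke). Either way the content is the same bookkeeping, and your observation that $\det\ell_{p,q}=q$ matches the normalized volume is exactly the consistency check underlying the paper's shortcut.
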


\begin{proof}
For part (1) just evaluate $\ell_{p,q}$ at the four vertices of $\Delta(p,q)$. For part~(2) first observe that $\ell_{p,q}$ has determinant $q$ (its matrix is triangular with product $q$ on the diagonal) so that $\ell_{p,q}(\Z^3)$ is a lattice of determinant $q$. By part (1) this lattice contains $q \Z\times q\Z \times \Z$. If we show it contains both $\left(p', 1,0\right)$ and $\left(1,{p''},0\right)$ we are finished. This is so because
\[ 
\ell_{p,q}(1,1,0)=\left(p', 1,0\right)
\quad \text{ and}\quad
\ell_{p,q}\Big(\tfrac{1+pp''}{q},p'',0\Big)=(1,p'',0).\vspace{-7mm}
\]
\end{proof}

So, from now we denote by $\tetra(p,q)$ the tetrahedron  
\[
\tetra(p,q):=\conv\{(0,0,0), (q,0,0), (0,0,1),(0,q,1)\},
\]
which we consider with respect to the lattice 
\[
\Lambda_{p,q}:=q \Z\times q\Z \times \Z + \Z \left\langle \left({p'}, 1,0\right) \right\rangle = q \Z\times q\Z \times \Z  + \Z \left\langle \left(1, {p''},0 \right) \right\rangle,
\] 
where $p' = -p \bmod q$ and $p'' = -p^{-1} \bmod q$. The remarks above show that $\tetra(p,q)$ is (with respect to $\Lambda_{p,q}$) lattice equivalent to $\Delta(p,q)$ with respect to~$\Z^3$.

\section{Unimodular triangulations of $2\tetra(p,q)$ exist only for $p=\pm1$}
\label{sec:k=2}

Let $\tetra(p,q)$ be an empty tetrahedron. 
Then $2\tetra(p,q)$ contains exactly the following points of the lattice $\Lambda_{p,q}$:
\begin{compactitem}[ $\bullet$ ]
\item Its four vertices, $(0,0,0)$, $(2q,0,0)$, $(0,0,2)$, and $(0,2q,2)$ are lattice points.
\item The six mid-points of edges are lattice points. Four of them are at height~$1$ (here and in what follows we call height the third coordinate); these four, $(0,0,1)$, $(q,0,1)$, $(0,q,1)$ and $(q,q,1)$, are the vertices of a \emph{fundamental square} of the lattice $q\Z\times q\Z \times \{1\}$.
\item There are exactly $q-1$ lattice points in the interior of the fundamental square, one on each line $\{x =i\}$, $i=1,\dots, q-1$, and also one on each line $\{y=j\}$, $j=1,\dots q-1$. More precisely, we have
\[
(ip'\bmod q,i,1)\in \tetra(p,q)\qquad\text{for all }i,\ 1\le i\le q,
\]
and
\[
(j,jp''\bmod q,1)\in  \tetra(p,q)\qquad\text{for all }j,\ 1,\le j\le q, 
\]
where $p'=-p\bmod q$ and $p''=-p^{-1}\bmod q$.
\end{compactitem}
See Figure~\ref{fig:square_5_13} for a picture of the $16$ lattice points in the fundamental square of $\tetra(5,13)$.
Note that in general the lattice points in the interior of the fundamental square form the pattern of a $(q-1)\times(q-1)$ permutation matrix.

\begin{figure}
\includegraphics[scale=.5]{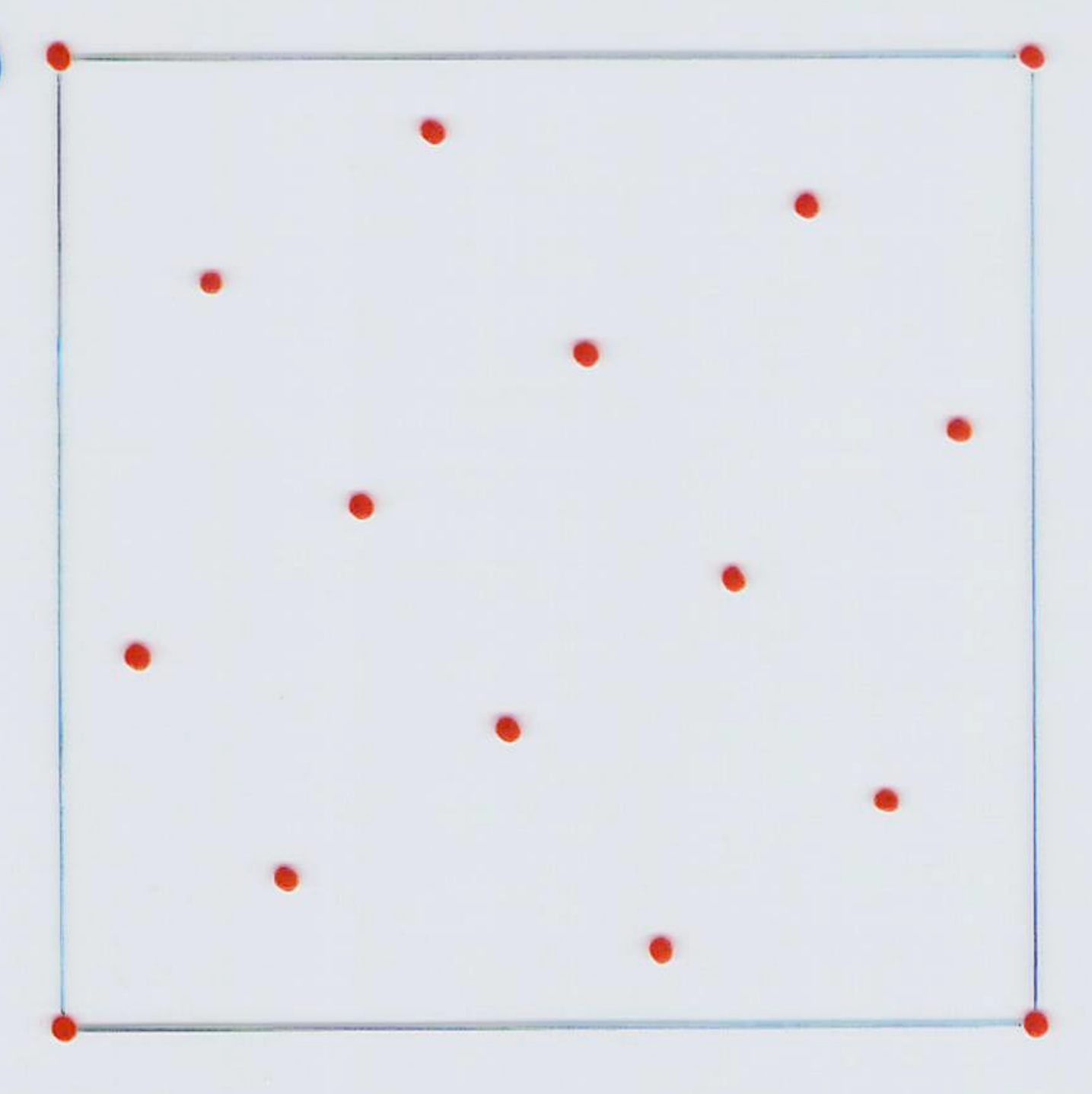}
\caption{The lattice points in the fundamental square for $\tetra(5,13)$.}
\label{fig:square_5_13}
\end{figure}

\begin{definition}
\label{defi:max-path}
We call \emph{maximal X-path} the X-monotone path that goes through the $q-1$ lattice points in the interior of the fundamental square. Similarly, we call \emph{maximal Y-path} the Y-monotone path that goes through the $q-1$ lattice points in the interior of the fundamental square.
See Figure~\ref{fig:max-paths}.
\end{definition}

\begin{figure}
\includegraphics[scale=.5]{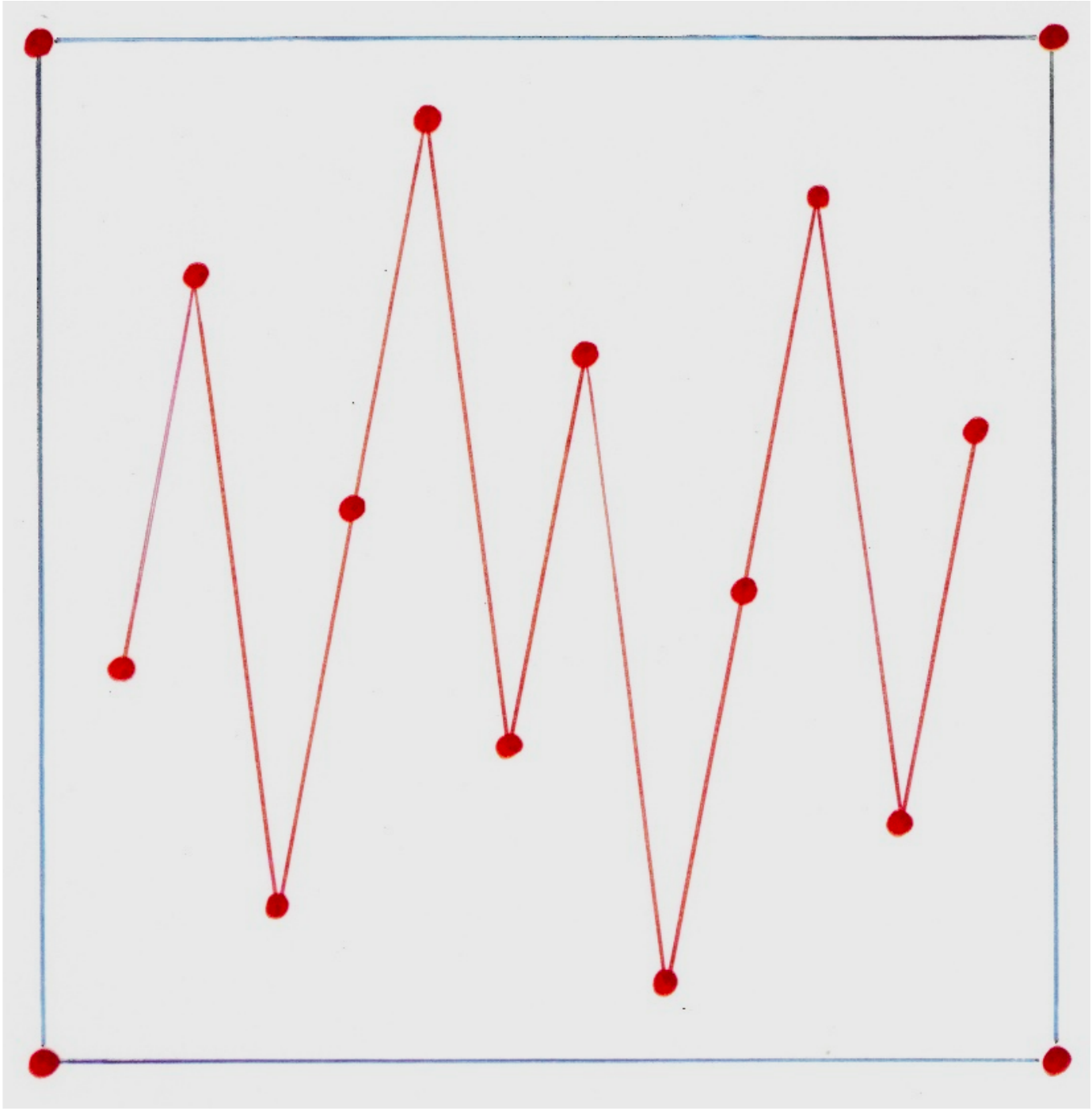}
\caption{The maximal X-path for $\tetra(5,13)$. In this case the maximal Y-path can be obtained by 90 degree rotation, but that is a coincidence, due to the fact that $-5=5^{-1} \bmod 13$.}
\label{fig:max-paths}
\end{figure}

\begin{lemma}\label{lemma:k=2-necessity} 
\begin{compactenum}[\rm(1)]
\item Suppose that the dilated simplex $2\tetra(p,q)$ has a unimodular triangulation.  Then $2\tetra(p,q)$ has a unimodular triangulation $T$ that does not contain any of the edges from height $0$ to height $2$; that is, $T$ triangulates both the half-tetrahedra $\tetra(p,q)\cap\{z\ge1\}$ and $\tetra(p,q)\cap\{z\le 1\}$. 
\item Any $T$ that satisfies the conditions in {\rm(1)} contains all the edges of the maximal X-path and of the maximal Y-path in the fundamental square.
\end{compactenum}
\end{lemma}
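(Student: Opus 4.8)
The key to both parts is that every lattice point of $2\tetra(p,q)$ lies at height $0$, $1$ or $2$ (by \emph{height} we mean the third coordinate). Hence a lattice triangulation $T$ of $2\tetra(p,q)$ contains an edge from height $0$ to height $2$ iff some tetrahedron of $T$ has a vertex at each of these two heights, which happens iff $T$ fails to refine the subdivision of $2\tetra(p,q)$ cut out by the plane $\{z=1\}$ into the half-tetrahedra $H_{\mathrm{b}}:=2\tetra(p,q)\cap\{z\le1\}$ and $H_{\mathrm{t}}:=2\tetra(p,q)\cap\{z\ge1\}$. So part~(1) amounts to turning a given unimodular triangulation $T_0$ into one refining $\{H_{\mathrm{b}},H_{\mathrm{t}}\}$. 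I would start by listing the possible ``long'' edges: one joins a point $(a,0,0)$ with $a\in\{0,q,2q\}$ to a point $(0,b,2)$ with $b\in\{0,q,2q\}$, and meets $\{z=1\}$ at $(a/2,b/2,1)$. When this point lies in $\Lambda_{p,q}$ the segment is already subdivided and cannot occur in any lattice triangulation; running through the nine cases, this excludes the four edges of $2\tetra(p,q)$ joining vertices of opposite heights (their midpoints are the four corners of the fundamental square). The survivors are exactly five: the four edges, one on each facet of $2\tetra(p,q)$, joining the vertex of that facet that is alone at its height to the midpoint of the opposite edge; and the edge $m_0m_2$ joining the midpoint $m_0=(q,0,0)$ of the bottom edge to the midpoint $m_2=(0,q,2)$ of the top edge, which crosses $\{z=1\}$ at the centre $(q/2,q/2,1)$ of the fundamental square (so it can occur only when $q$ is odd, the centre being a lattice point for $q$ even).

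I would then remove these edges by local surgery on $T_0$. Take a facet long edge, say $e=(0,0,0)$–$(0,q,2)$ on the facet $F=\{x=0\}$. Up to a lattice isomorphism $F$ is a triangle all of whose edges have lattice length $2$, and the only full triangulation of $F$ containing $e$ is the one violating the midline $\{z=1\}\cap F$. In that case, using Lemma~\ref{lemma:lambda_pq} --- which shows that the $\{x=\text{const}\}$ are lattice hyperplanes and that the only lattice point of $2\tetra(p,q)$ with $x=1$ is the interior square point $A_1$ at $x$-coordinate $1$ --- the two tetrahedra of $T_0$ whose $F$-facets are the two triangles of $T_0|_F$ adjacent to $e$ are forced to be $\conv\{(0,0,0),(0,q,1),(0,q,2),A_1\}$ and $\conv\{(0,0,0),(0,0,1),(0,q,2),A_1\}$, and $e$ lies in no other tetrahedron of $T_0$ (its link is the length-two path $(0,q,1)$–$A_1$–$(0,0,1)$). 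These two tetrahedra form a quadrilateral pyramid with apex $A_1$ over the planar convex quadrilateral $\conv\{(0,0,0),(0,q,1),(0,q,2),(0,0,1)\}\subset F$, with $e$ as a diagonal of the base; replacing them by the two tetrahedra cut off by the other diagonal $(0,0,1)$–$(0,q,1)$ deletes $e$ and alters nothing else. The four facet flips are mutually independent (each touches only tetrahedra incident to its own facet), so afterwards $T_0$ has standard boundary. If the central edge $m_0m_2$ still occurs, I would analyse the tetrahedra around it and flip it away as well. Controlling the link of $m_0m_2$ in an arbitrary triangulation and showing this flip can be carried out without recreating a facet long edge is, I expect, the main technical obstacle of part~(1).

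For part~(2), let $T$ satisfy the conditions of~(1), so $T$ restricts to unimodular triangulations of $H_{\mathrm{b}}$ and $H_{\mathrm{t}}$ and to a triangulation $T_Q$ of the fundamental square $Q$ using all of its $q+3$ lattice points --- hence automatically unimodular. The statement is invariant under the lattice symmetry interchanging $x$ and $y$ (which swaps the X-path with the Y-path) and under the one interchanging $H_{\mathrm{b}}$ with $H_{\mathrm{t}}$, so it suffices to show that the X-path edges $A_jA_{j+1}$, $1\le j\le q-2$, lie in $T_Q$, where $A_j$ is the interior lattice point of $Q$ with $x$-coordinate $j$. First I would record the structure forced by unimodularity of $H_{\mathrm{b}}$: each edge of $\partial Q$ is primitive, hence a single edge of $T_Q$, and the ear triangle it carries has its apex fixed by the area count (e.g.\ the bottom edge, of lattice length $q$, carries $\triangle(0,0,1)(q,0,1)B_1$ with $B_1$ the interior point at $y$-coordinate $1$). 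Then, to locate $A_jA_{j+1}$, I would sweep the vertical line $\{x=j+\tfrac12\}$ across $Q$ from the bottom ear to the top ear and track the triangles of $T_Q$ it meets: since $A_j$ (resp.\ $A_{j+1}$) is the only lattice point of $Q$ at $x=j$ (resp.\ $x=j+1$), a unimodular triangle met by the line has at most one vertex at $x=j$ and at most one at $x=j+1$, and following how the endpoints of the crossed edges descend the two ``staircases'' as the line rises forces, at the instant the line crosses the segment $A_jA_{j+1}$, a triangle having both $A_j$ and $A_{j+1}$ as vertices --- provided the triangulation does not ``skip over'' one of them. Excluding the skipping configurations is the crux, and this is where the three-dimensional hypothesis genuinely enters: $H_{\mathrm{b}}$ is the Cayley polytope of the subdivided segment $[v_0,v_1]$ (with marked point $m_0$) and the square $Q$, so by the Cayley trick a unimodular triangulation of $H_{\mathrm{b}}$ corresponds to $T_Q$ together with a colouring of its triangles by $v_0$, $m_0$ or $v_1$ (the triangle coloured $c$ being the base of the tetrahedron $\conv\{\text{triangle},c\}$), subject to the compatibility condition that no $v_0$-coloured triangle is adjacent to a $v_1$-coloured one (the segment $v_0v_1$ not being primitive). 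It is this compatibility that rules out the would-be skipping triangles and pins down $T_Q$ enough to contain every X-path edge; running the same argument with $x$ and $y$ interchanged then yields the Y-path edges.
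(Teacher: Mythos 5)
Your proposal is incomplete at precisely the two points where the real work lies. In part (1) your enumeration of the nine candidate long edges, the exclusion of the four vertex-to-vertex ones, and the flip that removes each of the four facet edges (the link of such an edge really is the two-step path through the unique lattice point at lattice distance one from that facet, so the flip is legitimate and the four flips are disjoint) all check out and match the paper's first case. But you then stop at the central edge $e=\conv\{(q,0,0),(0,q,2)\}$, which for odd $q$ is primitive and can genuinely occur, and you explicitly defer "the main technical obstacle" of analysing its link. That is not a proof. The paper closes this case by a counting argument rather than a flip: once the facet edges are gone, $e$ is the only long edge, so its link is a polygon $P$ at height $1$ with, say, $k$ lattice points, all on its boundary; the star of $e$ consists of $k$ unimodular tetrahedra, hence has normalized volume $k$, while retriangulating it by triangulating $P$ into $k-2$ triangles and coning to both endpoints of $e$ produces $2k-4$ lattice tetrahedra of volume at least one each, forcing $k\le4$; moreover each of the two cones is a lattice polytope of volume $k/2$, so $k$ is even, hence $k=4$ and the retriangulation is itself unimodular and free of long edges. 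Some such quantitative step is unavoidable, and your sketch contains nothing that would substitute for it.

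For part (2) you take a genuinely different and much heavier route --- restricting $T$ to a triangulation $T_Q$ of the fundamental square, sweeping a vertical line, and invoking the Cayley trick to encode the third dimension as a colouring of the triangles of $T_Q$ --- and again the decisive step ("excluding the skipping configurations") is asserted, not proved. You are right that the $2$-dimensional triangulation $T_Q$ alone cannot suffice (a unimodular triangulation of $Q$ need not contain either maximal path), so the compatibility condition must carry the whole weight, but you never show that it does. The paper's argument is far more direct and you should adopt it: consider the link in $T$ of the bottom edge $\conv\{(0,0,0),(q,0,0)\}$. Since $T$ has no long edges and no collinear triangles, this link is a path of height-$1$ lattice points running from $(0,0,1)$ or $(q,0,1)$ to $(0,q,1)$ or $(q,q,1)$, and the tetrahedron spanned by the bottom edge and two height-$1$ points $(a,a',1)$, $(b,b',1)$ has normalized volume $|b'-a'|$; unimodularity therefore forces consecutive link vertices to differ by exactly one in their $y$-coordinate, so the path visits every interior point of the fundamental square in $Y$-order and contains the maximal Y-path. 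The symmetric argument applied to a top edge yields the maximal X-path. This replaces your entire sweep-and-Cayley apparatus with a one-line volume computation.
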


\begin{proof}
Let us first prove part (2). For this, we look at the link of one of the bottom edges of $T$, say that of the edge from $(0,0,0)$ to $(q,0,0)$. This link must form a path in the fundamental square, going from one of the vertices $(0,0,1)$ or $(q,0,1)$ to one of $(0,q,1)$ or $(q,q,1)$. For the triangulation to be unimodular, two consecutive vertices in this path must have their second coordinate differing by exactly one. Hence, all the interior lattice points appear, and they do so in their Y order so that the maximal Y-path appears in full the link. The same argument, for one of the top edges, shows that the maximal X-path appears also. Figure~\ref{fig:half-2Delta} illustrates this.
\begin{figure}
\includegraphics[scale=.5]{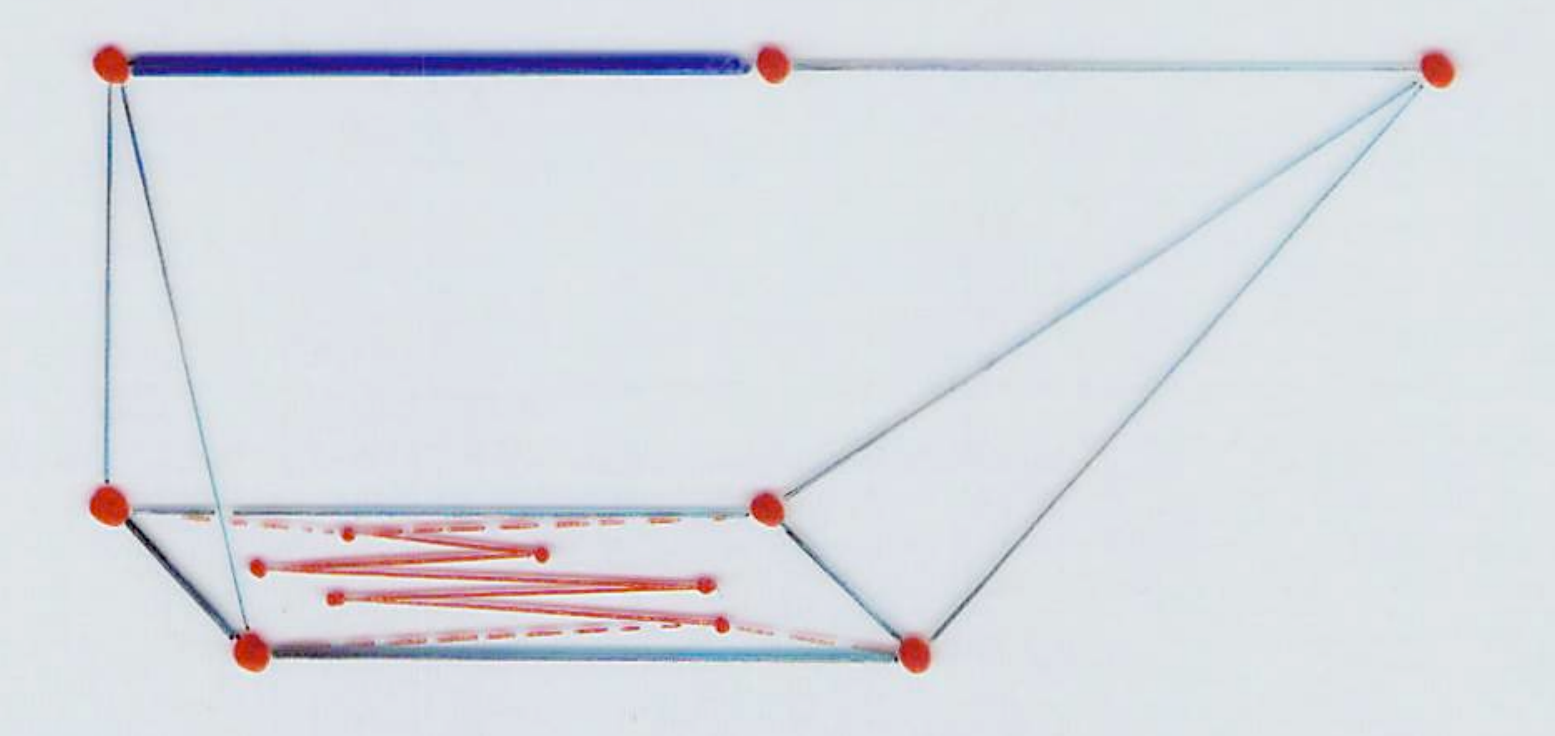}
\caption{Triangulating the upper half of the dilated tetrahedron $2\tetra(p,q)$.}
\label{fig:half-2Delta}
\end{figure}

Let us now prove part (1). For this, let $T$ be a unimodular triangulation that does contain one edge from height $0$ to height $2$ and let us construct another unimodular triangulation $T'$ that contains at least one less such edge. Let $e$ be one of those edges in $T$, which joins $(a,0,0)$ to $(0,b,2q)$ with $a,b\in\{0,q,2q\}$. For $e$ to be primitive we need $q$ to be odd and at last one of $a$ and $b$ to equal $1$. Say $a=1$.
\begin{compactitem}[ $\bullet$ ]
\item If $b=0$ (or $b=2$, which is a symmetric case) then the facet $y=0$ of $2\tetra(p,q)$ must be triangulated joining the edge 
$e$ to the vertices $(0,0,1)$ and $(q,0,1)$ of the fundamental square. The two triangles so formed must in turn be joined to the 
same interior point in the fundamental quadrilateral, namely the point $(p',1,1)$. A flip can be performed on these two tetrahedra 
that changes the edge $e$ to the edge $e'=\conv\{(0,0,1),(q,0,1)\}$.
\item So, only the case $a=b=1$ remains, that is, $e=\conv\{(q,0,0), (0,q,2)\}$, and $q$ odd. Moreover, by what we said in the previous case, we assume $e$ to be the only edge going from height~$0$ to height~$2$; that is, the link of $e$ is a (perhaps non-convex) polygon $P$ contained in the fundamental square. Let $k$ be the number of lattice points in~$P$, which are all in the boundary. We can retriangulate the star of $e$ by triangulating $P$ with $k-2$ triangles and coning to both ends of~$P$. This gives $2k-4$ tetrahedra instead of the $k$ in $T$, and, since $T$ was unimodular, $k$ must be at most four. But, also, $k$ must be even, since each of the halves in this retriangulated star has volume $k/2$. The only possibility is $k=4$, in which case this retriangulation procedure gives a new unimodular triangulation without any edges going from height~$0$ to height~$2$.\vspace{-5.5mm}
\end{compactitem}
\end{proof}

We now focus in the following particular class of empty tetrahedra. 

 \begin{definition}
 \label{defi:tetragonal}
We call the empty tetrahedra of the form $\tetra(\pm 1,q)$ \emph{tetragonal}.
 \end{definition} 

\begin{remark}
\label{rem:tetragonal}
As we show in Corollary~\ref{coro:k=2} below, tetragonal tetrahedra are precisely the ones whose second dilation has a unimodular triangulation. But they can also be characterized 
via their symmetries and via their width:
\begin{compactitem}[ $\circ$ ]
\item  $\tetra(p,q)$ is tetragonal if, and only if, the group of  lattice automorphisms preserving it has order at least eight. (More precisely, the automorphism group of a tetragonal simplex  contains the dihedral group of order eight, acting as the point group of type $\bar{4} 2 m$).
\item $\tetra(p,q)$ if, and only if, it has width~$1$ with respect to at least two of its three pairs of paralell edges.
\end{compactitem}
In both respects the simplices $\tetra(0,1)$ and $\tetra(1,2)$ are special: They are the ones that have width~$1$ with respect to \emph{the three} pairs of opposite edges, and the ones whose symmetry group is the whole tetrahedral group $\bar{4} 3 m$, isomorphic to~$S_4$.%
\end{remark}

\begin{corollary}
\label{coro:k=2}
The following properties are equivalent, for a non-unimodular empty simplex $\tetra(p,q)$:
\begin{compactenum}[ \rm(1) ]
\item $p=\pm 1 \pmod q$. That is,  $\tetra(p,q)$ is tetragonal.
\item All the lattice points in the fundamental square (except of course for two of the vertices of the square) lie in one of the two diagonals.
\item The maximal X-path and the maximal Y-path in the fundamental square are compatible, that is, they intersect properly.
\item $2\tetra(p,q)$ has unimodular triangulations.
\item $k\tetra(p, q)$ has unimodular triangulations for all $k\ge 2$. 
\end{compactenum}
Moreover, if this happens, then unimodular triangulations of\/ $2\tetra(p,q)$ (and of~$k\tetra(p,q)$, for all $k\ge2$)
exist that are standard in the boundary.
\end{corollary}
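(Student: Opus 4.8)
The plan is to establish the cycle $(1)\Rightarrow(5)\Rightarrow(4)\Rightarrow(3)\Rightarrow(1)$ together with $(1)\Leftrightarrow(2)$, and to read off the final ``moreover'' sentence from the construction used in $(1)\Rightarrow(5)$. Two of these links are immediate: $(5)\Rightarrow(4)$ is just the case $k=2$, and $(1)\Leftrightarrow(2)$ is a one-line computation, since the interior lattice points of the fundamental square are $(ip'\bmod q,\,i,\,1)$ for $i=1,\dots,q-1$ with $p'=-p\bmod q$: these lie on the main diagonal exactly when $p'=1$, i.e.\ $p\equiv-1\pmod q$, and on the anti-diagonal exactly when $p'=q-1$, i.e.\ $p\equiv1\pmod q$, and in every other case on neither.

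For $(4)\Rightarrow(3)$ I would invoke Lemma~\ref{lemma:k=2-necessity}: a unimodular triangulation of $2\tetra(p,q)$ produces one, $T$, triangulating both half-tetrahedra, and such a $T$ contains all the edges of both the maximal X-path and the maximal Y-path; being edges of a single triangulation of the slice at height~$1$, the two paths must intersect properly, which is~$(3)$.

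For $(3)\Rightarrow(1)$ I would prove the contrapositive: if $2\le p'\le q-2$, then the maximal X-path and the maximal Y-path cannot both be edge-sets of one triangulation of the fundamental square, because some edge of one meets some edge of the other in their relative interiors. The arithmetic behind this is that the pattern of interior points is the graph of $\sigma\colon i\mapsto ip'\bmod q$, which for $p'\notin\{1,q-1\}$ increases by $p'$ between consecutive wraparounds and drops by $q-p'\ge2$ at each wraparound; locating such a descent of the Y-path and the stretch of the X-path running across the $x$-interval jumped over, and checking that two of the resulting segments meet transversally, is an elementary but somewhat delicate case analysis. \textbf{This is the step I expect to be the main obstacle}, since it is the only point at which one must exclude all triangulations simultaneously rather than produce one.

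Finally, for $(1)\Rightarrow(5)$ with the standard-boundary claim, I would use that a tetragonal simplex is, after a lattice change of coordinates (and using $\Delta(1,q)\cong\Delta(-1,q)$), the Cayley polytope $C:=\conv\bigl(S_0\times\{0\}\cup S_1\times\{1\}\bigr)\subset\R^3$ of two primitive lattice segments $S_0,S_1\subset\R^2$ whose union affinely spans the plane; e.g.\ $S_0=[\mathbf{0},e_1]$ and $S_1=[\mathbf{0},e_1+qe_2]$ realizes $\Delta(1,q)$. Then $kC$ meets the plane of height $j\in\{0,\dots,k\}$ in the parallelogram $(k-j)S_0+jS_1$ (a segment for $j\in\{0,k\}$), and the slab of $kC$ between heights $j$ and $j+1$ is the Minkowski sum of the parallelogram $(k-j-1)S_0+jS_1$ with a translated copy of $C$. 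One triangulates these slabs unimodularly and compatibly --- so that adjacent slabs agree on the parallelogram slice they share and so that the restriction to $\partial(kC)$ is standard, the latter being possible because that portion of $\partial(kC)$ consists of dilated facets of $C$, each a dilated unimodular triangle carrying its own unique standard triangulation. Concatenating over $j=0,\dots,k-1$ yields, for every $k\ge1$, a unimodular triangulation of $k\tetra(p,q)$ standard on the boundary, proving $(5)$ and the last sentence of the corollary. (Carrying out the compatible choice on the intermediate parallelograms is the routine-but-technical part of this step.)
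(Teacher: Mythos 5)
Your logical skeleton is sound and largely matches the paper's: $(5)\Rightarrow(4)$ and $(1)\Leftrightarrow(2)$ are immediate, and $(4)\Rightarrow(3)$ via Lemma~\ref{lemma:k=2-necessity} is exactly the paper's route. But the two implications that carry all the content are left as sketches, and in each case what you defer is precisely where the work is. For $(3)\Rightarrow(1)$ you announce a ``somewhat delicate case analysis'' on the descents of $i\mapsto ip'\bmod q$ and do not carry it out; this is a genuine gap, but it can be closed with no case analysis at all. The paper's argument: assume without loss of generality $p'<q/2$ and, for contradiction, $p'>1$ (note also $p''\le q-2$, since $p''=q-1$ would force $p=1$), and consider the four interior points $a=(p',1,1)$, $b=(2p',2,1)$, $c=(1,p'',1)$, $d=(p'+1,p''+1,1)$ (here $d$ lies in the square because $p'p''\equiv 1\pmod q$). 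Then $ab$ and $cd$ are edges of the maximal Y-path, $ad$ is an edge of the maximal X-path, and a Y-monotone path containing both $ab$ and $cd$ would have to travel from $b$ (which lies to the right of the vertical strip $p'\le x\le p'+1$) to $c$ (which lies to its left) without crossing the segment $ad$, which is impossible. You should replace your sketch by this, or by a completed version of your own argument.

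For $(1)\Rightarrow(5)$ with standard boundary, your Cayley/Minkowski slab decomposition is essentially the paper's decomposition into horizontal layers in different clothing, but the assertion that each slab admits a unimodular triangulation, compatible across slabs and standard on $\partial(kC)$, \emph{is} the theorem in this case and cannot be waved through as ``routine but technical'': every $\tetra(p,q)$ admits the same slab structure, and for $p\ne\pm1\pmod q$ the conclusion is false already at $k=2$, so tetragonality must enter in the triangulation of the slabs, not merely in setting them up. The paper proves $(1)\Rightarrow(4)$ by an explicit join --- the $q+1$ diagonal points $a_0,\dots,a_q$ of the fundamental square joined segmentwise to the $8$-cycle $b_1,\dots,b_8$ of vertices and edge midpoints of $2\tetra(1,q)$, giving $8q$ tetrahedra and hence unimodularity by a volume count --- and even this triangulation is \emph{not} standard on the boundary: four explicit flips (Table~\ref{table:tetragonal}) are needed to fix it. The general case $k\ge2$ is deliberately postponed to Corollary~\ref{coro:tetragonal}, where the point is that for tetragonal simplices the maximal X- and Y-paths coincide, so the layer/toblerone triangulations of Section~\ref{sec:non-standard} can all be made standard. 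To make your version complete you must exhibit the unimodular triangulation of a single slab, show adjacent slabs can be made to agree on the intermediate parallelograms, and verify (not just assert) that the restriction to the boundary can be taken standard.
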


\begin{proof}
The implications $(1)\Leftrightarrow(2)\Rightarrow(3)$ are obvious: If $p=\pm 1$ the two maximal paths coincide and run along the diagonal of the fundamental square. 
$(5)\Rightarrow(4)\Rightarrow(3)$ are also obvious, the latter in the light of the previous lemma; some unimodular triangulation must contain both maximal paths, hence they are compatible. We now prove $(3)\Rightarrow(1)\Rightarrow(4)$, and postpone the proof of $(3)\Rightarrow(5)$ until Section~\ref{sec:non-standard} (Corollary~\ref{coro:tetragonal}).

For $(3)\Rightarrow(1)$, let $p'=-p\bmod q$ and $p''=-p^{-1}\bmod q$. Without loss of generality assume $p'<q/2$
(that is, $p>q/2$) and, to seek a contradiction, assume $p'>1$. Observe also that $p''\le q-2$ since $p''=q-1$ would imply $p=-{p''}^{-1}\bmod q=1$.

We consider the following four lattice points in the fundamental quadrilateral (see Figure~\ref{fig:compatible}): 
\[\begin{array}{cc}
a=(p',1,1),     & b=(2p',2,1),\\
c=(1,p'',1),& d=(p'+1,p''+1,1).
\end{array}
\]
\begin{figure}
\includegraphics[scale=.5]{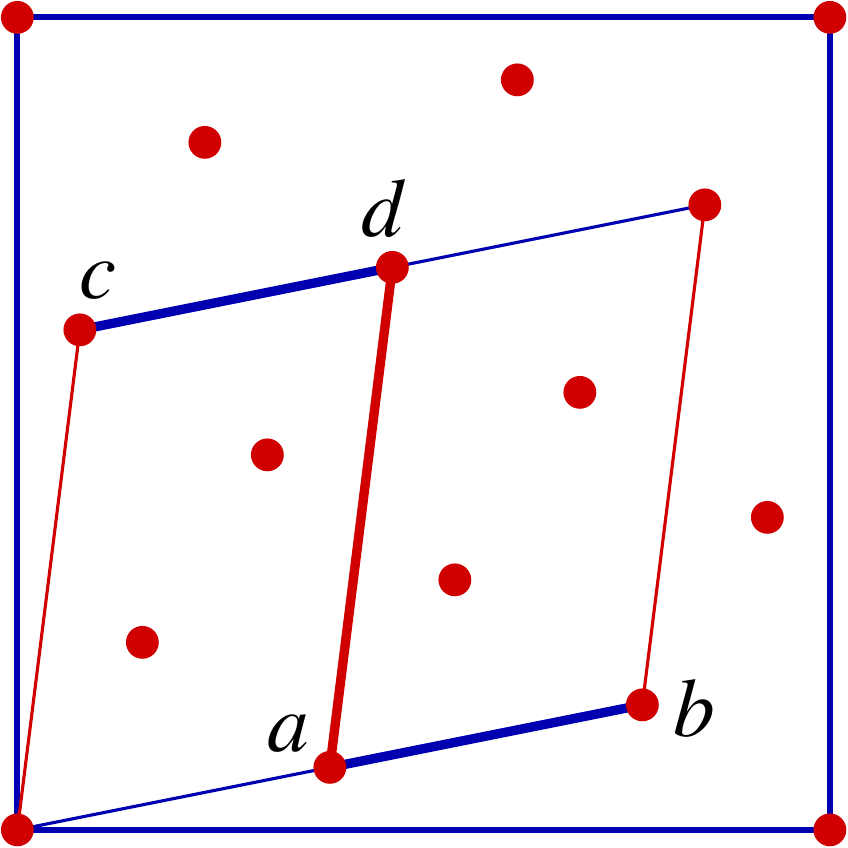}
\caption{If the maximal X and Y paths are compatible in $\tetra(p,q)$ then $p=\pm 1$}
\label{fig:compatible}
\end{figure}
The edges $ab$ and $cd$ are part of the maximal Y-path, while the edge $ad$ is part of the maximal X-path. If the two paths were compatible, the maximal Y-path should go from $b$ to $c$ in a Y-monotone fashion without crossing $ad$, which is impossible.

We finally prove $(1)\Rightarrow(4)$. For this, we consider the following alternative description of the $q+9$ lattice points in $2\tetra(1,q)$, as indicated in Figure~\ref{fig:tetragonal}: 
\begin{compactitem}[ $\bullet$ ]
\item $q+1$ collinear points along one of the diagonals of the fundamental square, starting and finishing with the mid-points of two opposite edges of $2\tetra(1,q)$. We denote these points $a_0,\dots, a_q$.
\item The four vertices of $2\tetra(1,q)$ and the mid-points of the other four edges of $2\tetra(1,q)$. We denote them $b_1,\dots,b_8$, in the circular order in which they appear around the edge $a_0a_q$, and in such a way that $a_0$ is the mid-point of $b_1b_5$ and $a_q$ that of $b_3b_7$.
\end{compactitem}
\begin{figure}
\includegraphics[scale=.5]{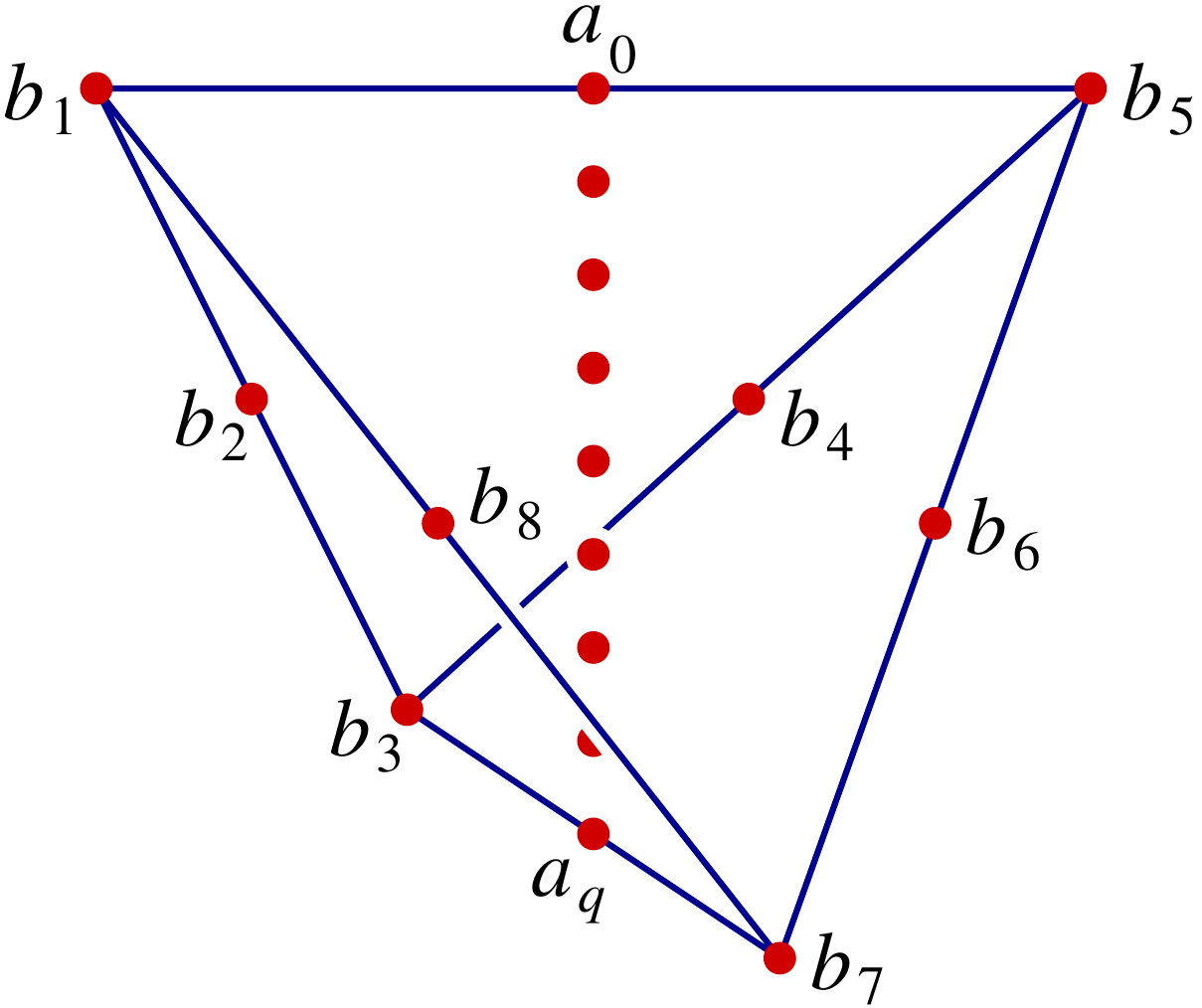}
\caption{The $q+9$ lattice points in $2\tetra(1,q)$}
\label{fig:tetragonal}
\end{figure}
Then an easy way to triangulate  $2\tetra(1,q)$ is to join each of the $q$ segments $a_ia_{i+1}$ with each of the eight segments $b_ib_{i+1}$ (including $b_8b_1$ among the latter). Since the number $8q$ of tetrahedra so obtained equals the normalized volume of $2\tetra(1,q)$, this triangulation, call it $T$, is unimodular.

The only problem that remains is that $T$ is not standard in the boundary, but that can easily be solved. Substitute each pair of tetrahedra form the first column of Table~\ref{table:tetragonal} for the two tetrahedra in the same row of the second column. Each substitution flips between one and the other triangulation of a certain square pyramid, so we get another unimodular triangulation $T'$, which is now standard in the boundary.
\begin{table}
\begin{tabular}{cc|cc}
\multicolumn{2}{c|}{remove  tetrahedra} & \multicolumn{2}{c}{insert  tetrahedra}\\
\hline
$a_0a_1b_2b_3$& $a_0a_1b_3b_4$ & $a_0a_1b_2b_4$& $a_1b_2b_3b_4$ \\
\hline 
$a_{q-1}a_qb_4b_5$& $a_{q-1}a_qb_5b_6$ & $a_{q-1}a_qb_4b_6$& $a_{q-1}b_4b_5b_6$ \\
\hline 
$a_0a_1b_6b_7$& $a_0a_1b_7b_8$ & $a_0a_1b_6b_8$& $a_1b_6b_7b_8$ \\
\hline 
$a_{q-1}a_qb_8b_1$& $a_{q-1}a_qb_1b_2$ & $a_{q-1}a_qb_8b_2$& $a_{q-1}b_8b_1b_2$ \\
\end{tabular}
\vspace{2mm}
\caption{Making the triangulation standard in the boundary}
\label{table:tetragonal}
\end{table}
\end{proof}
 
A triangulation of a (not necessarily lattice) polytope $P$ is called \emph{regular} (or \emph{coherent}, or \emph{convex}) if its simplices are the domains of linearity of a piece-wise linear and convex map $P\to \R$. 
(see, e.g.,~\cite{bgBook}, \cite{deLoeraRambauSantos}, \cite{KKMS3}.) 
Regularity of triangulations is an extremely important property for the applications in toric geometry, which were the original motivation for~\cite{KKMS3}. In particular, it is proved in~\cite{KKMS3} that the triangulations whose existence follows from Theorem~\ref{thm:KMW} can be taken regular. In the constructions of the following sections we do not see an easy way to guarantee that our triangulations are regular, but in the case of tetragonal simplices we do. 

\begin{proposition}
\label{prop:tetragonal_regular}
$2\tetra(1,q)$ has regular unimodular triangulations with standard boundary, for every $q$.
\end{proposition}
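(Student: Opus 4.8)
The plan is to certify that the standard-boundary unimodular triangulation $T'$ of $2\tetra(1,q)$ produced in the proof of Corollary~\ref{coro:k=2} (or one very close to it) is regular, by exhibiting a convex lifting. First I would isolate the combinatorial skeleton. Write $A:=\conv\{a_0,a_q\}$ for the diagonal of the fundamental square; it carries all interior lattice points $a_1,\dots,a_{q-1}$ of $2\tetra(1,q)$, and $a_0,a_q$ are the midpoints of one pair $\{b_1b_5,b_3b_7\}$ of opposite edges of the tetrahedron $2\tetra(1,q)=\conv\{b_1,b_3,b_5,b_7\}$, while $b_2,b_4,b_6,b_8$ are the midpoints of its other four edges. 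The linear projection $\pi(x,y,z):=(x+y,z)$ collapses $A$ to the center of the rectangle $R:=[0,2q]\times[0,2]$ and carries $b_1,\dots,b_8$ onto the four corners and four edge-midpoints of $R$. Apart from the four boundary flips of Table~\ref{table:tetragonal}, the triangulation $T$ of Corollary~\ref{coro:k=2} refines the decomposition of $2\tetra(1,q)$ into the eight tetrahedral slabs $S_j=\conv\{a_0,a_q,b_j,b_{j+1}\}$ ($j$ mod $8$), and inside each slab it is the fan over $A$, with cells $\conv\{a_i,a_{i+1},b_j,b_{j+1}\}$.

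Next I would check that the un-flipped triangulation $T$ is regular. The quickest route is to observe that $T$ is a placing (pushing) triangulation, for the insertion order $b_1,b_3,b_5,b_7$, then $a_0,a_q$, then $b_2,b_4,b_6,b_8$, then $a_1,\dots,a_{q-1}$: inserting the four vertices gives the simplex; inserting $a_0$ and $a_q$ cuts it into the four ``double slabs'' $\conv\{a_0,a_q,b_h,b_k\}$; inserting $b_2,b_4,b_6,b_8$ splits these into the eight slabs $S_j$; and inserting $a_1,\dots,a_{q-1}$ in order subdivides each slab into the fan over $A$, reproducing $T$ exactly. Since placing triangulations are regular, $T$ is regular. (Equivalently one can give a two-step lifting $\omega_1+\varepsilon\omega_2$, with $\omega_1=\psi\circ\pi$ for the convex piecewise-linear $\psi$ on $R$ which is the lower envelope of ``corners of $R$ at height $1$, edge-midpoints at $1-\eta$, center at $0$'', and $\omega_2(a_i)=i^2$, $\omega_2(b_j)=0$.)

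It remains to pass from $T$ to a standard-boundary triangulation. The four flips of Table~\ref{table:tetragonal} occur in four pairwise disjoint neighborhoods of the four vertices of $2\tetra(1,q)$; each is the bistellar flip that exchanges the two diagonals of a boundary quadrilateral $Q_k$ lying in a facet, replacing the ``central'' diagonal used in $T$ by the midline required for standardness, and each is a single edge of the secondary polytope at the vertex of $T$. Because the four flips have disjoint support, I expect a lifting realizing all four of them simultaneously can be obtained either by adding to $\omega_1+\varepsilon\omega_2$ four disjointly supported convex bumps, one near each vertex of $2\tetra(1,q)$ and tuned so that the $k$-th bump flips only the pyramid over $Q_k$ with apex the first interior point $a_1$ (or $a_{q-1}$), or by replacing $\omega_1$ from the outset by a lifting whose slab decomposition already uses the standard diagonal on each $Q_k$. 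The resulting regular triangulation then differs from $T$ only by the four flips, so it is still unimodular, and by construction it is standard on the boundary.

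The hard part is precisely this last step: controlling the perturbation finely enough to guarantee that it performs exactly the four intended boundary flips and disturbs nothing else, the subtlety being that the vertices involved in each flip (such as $b_2,b_3,b_4,a_0$) also lie in many other tetrahedra of $T$, so the bumps must be small with carefully chosen supports, and one must make sure no further cells degenerate. A cleaner but more computational alternative, which avoids the secondary-polytope bookkeeping altogether, is to write the lifting $\omega_1+\varepsilon\omega_2$ (suitably modified near the four vertices) explicitly in coordinates and simply verify the affine-support certificate for $T'$ cell by cell, organized by the slab structure $S_j=\conv\{a_0,a_q,b_j,b_{j+1}\}$; along $A$ the strict convexity of $\omega_2$ forces the fan over $A$ in each slab, so only the four vertex regions require real checking.
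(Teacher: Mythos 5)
Your regularity argument for the \emph{unflipped} triangulation $T$ of Corollary~\ref{coro:k=2} (as a placing triangulation, or via the lifting $\omega_1+\varepsilon\omega_2$) is plausible and could be made to work. The genuine gap is exactly where you flag it: the passage from $T$ to the standard-boundary triangulation $T'$ by realizing the four flips of Table~\ref{table:tetragonal} inside a single convex lifting. Saying that the four flips have ``disjoint support'' understates the difficulty: each flip is a diagonal switch in a boundary quadrilateral such as $a_0b_2b_3b_4$ coned over $a_1$, and the vertices $a_0,a_1,b_2,b_3,b_4$ each lie in many other cells of $T$ (indeed $a_0$ and $a_1$ lie in cells of all eight slabs $S_j$), so any height perturbation that reverses the sign of the circuit $(a_0b_3;b_2b_4)$ also changes the lifted position of these points relative to every other cell containing them. ``I expect a lifting \dots can be obtained'' is not a proof that $T'$ is regular; performing a prescribed set of bistellar flips on a regular triangulation does not in general preserve regularity, and the burden is precisely to exhibit heights for which exactly these four circuits flip and nothing else degenerates. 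Your proposal identifies this as the hard part and then leaves it open, so as written the proof is incomplete.

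The paper avoids the issue entirely by \emph{not} starting from the fine triangulation $T$. It first takes the coarse subdivision $T_0$ determined by only six points --- the four vertices of $2\tetra(1,q)$ together with $a_1$ and $a_{q-1}$ --- which is regular for free because $d+3$ points in dimension $d$ admit no non-regular triangulations \cite[Theorem 5.5.1]{deLoeraRambauSantos}. Four of the eight cells of $T_0$ are cones over entire facets of $2\tetra(1,q)$ with apex $a_1$ or $a_{q-1}$, so a single refining weight vector $\omega$ (e.g.\ Delaunay weights), applied uniformly to all cells as in \cite[Lemma 2.3.6]{deLoeraRambauSantos}, can be chosen to produce the standard triangulation on each boundary facet directly; the cells containing $a_1a_{q-1}$ are refined to a full triangulation along the diagonal. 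Regularity of the refinement is then automatic from the refinement lemma, and no flip ever has to be realized. This is essentially your second, undeveloped alternative (``replace $\omega_1$ from the outset by a lifting whose coarse decomposition already uses the standard diagonal''), except that the correct coarse decomposition is the $6$-point one, not the slab decomposition into the $S_j$. If you want to salvage your route, I would recommend switching to this two-stage (coarse regular subdivision $+$ uniform regular refinement) argument rather than trying to control four simultaneous wall-crossings in the secondary fan of an $8q$-cell triangulation.
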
 

\begin{proof} 
The triangulation that we construct is the same as in the proof of Corollary~\ref{coro:k=2}, except we describe it differently.

We first consider the following subconfiguration containing only $6$ of the $q+9$ lattice points in $2\tetra(1,q)$: the first and last interior points in the maximal path ($a_1$ and $a_{q-1}$, in the notation of the proof of Corollary~\ref{coro:k=2}), plus the four vertices of $2\tetra(1,q)$. This configuration has a unique triangulation that uses both interior points, consisting of the following 8 tetrahedra: the edge $a_1a_{q-1}$ joined to the four edges of $2\tetra(1,q)$ that are not coplanar with it, plus each of $a_1$ and $a_{q-1}$ joined to the two facets $2\tetra(1,q)$ closest to it. This first triangulation is necessarily regular, since $d+3$ points in dimension $d$ never have non-regular triangulations~\cite[Theorem 5.5.1]{deLoeraRambauSantos}. 

Let us call $T_0$ this first triangulation, but considered as a regular \emph{subdivision} of the point configuration $2\tetra(1,q)\cap \Lambda(1,q)$. We now use the idea of \emph{regular refinements}~\cite[Lemma 2.3.6]{deLoeraRambauSantos}: 
If $T_0$ is a regular subdivision of a point configuration and each cell of $T_0$ is refined using a weight
vector  $\omega$ (the same in all cells) then the refinements of the different cells agree on the intersections and the subdivision obtained is regular.
 
In our case, the perturbing weights $\omega$ are easy to obtain: For the cells containing $a_1a_{q-1}$ any choice that creates a \emph{full} triangulation (one using all the intermediate points as vertices) does the job. For the other four tetrahedra, any choice that creates the standard triangulation in the boundary is valid. For example, taking as $\omega$ the Delaunay weights works in both cases.
 \end{proof}

\section{Unimodular triangulations of $k\tetra(p,q)$ exist for all $k\ge 4$}%
\label{sec:non-standard}

Here we show that for every empty tetrahedron $\tetra(p,q)$ and every $k\ge 4$ there are regular triangulations of $k\tetra(p,q)$. However, the triangulations constructed in this section are not guaranteed to be standard in the boundary. Hence they do not (automatically) provide unimodular triangulations of $kP$ for a lattice polytope $P$.

First observe that the set of lattice points in $k\tetra(p,q)$ has a structure very similar to those in  $2\tetra(p,q)$:
\begin{compactitem}[ $\bullet$]
\item It contains ${k+3 \choose 3}$ points of the lattice $q\Z\times q\Z\times \Z$, namely the points $(aq,bq,c)$ for $c=0,\dots,k$, $a=0,\dots,k-c$, $b=0,\dots,c$.
\item These points define ${k+2 \choose 3}$ translates of the fundamental square, namely 
\[
(aq,bq,c)+\conv\{(0,0,0), ((q,0,0), (0,q,0), (q,q,0)\}, \\
\]
for all $ c=1,\dots,k-1$, $a=0,\dots,k-c-1$, $b=0,\dots,c-1$.
In each of these fundamental squares we have $q-1$ interior lattice points, translates of those in the initial fundamental square.
\end{compactitem}
Our technique to triangulate $k\tetra(p,q)$ generalizes what we did in the previous section. Thus we first decompose $k\tetra(p,q)$ into $k$ horizontal layers, where the $c$-th layer equals
\[
\tetra(p,q)\cap \{(x,y,z)\in \R^3:c\le z\le c+1\}.
\]
See Figure~\ref{fig:layers} for an illustration.
\begin{figure}
\includegraphics[scale=.5]{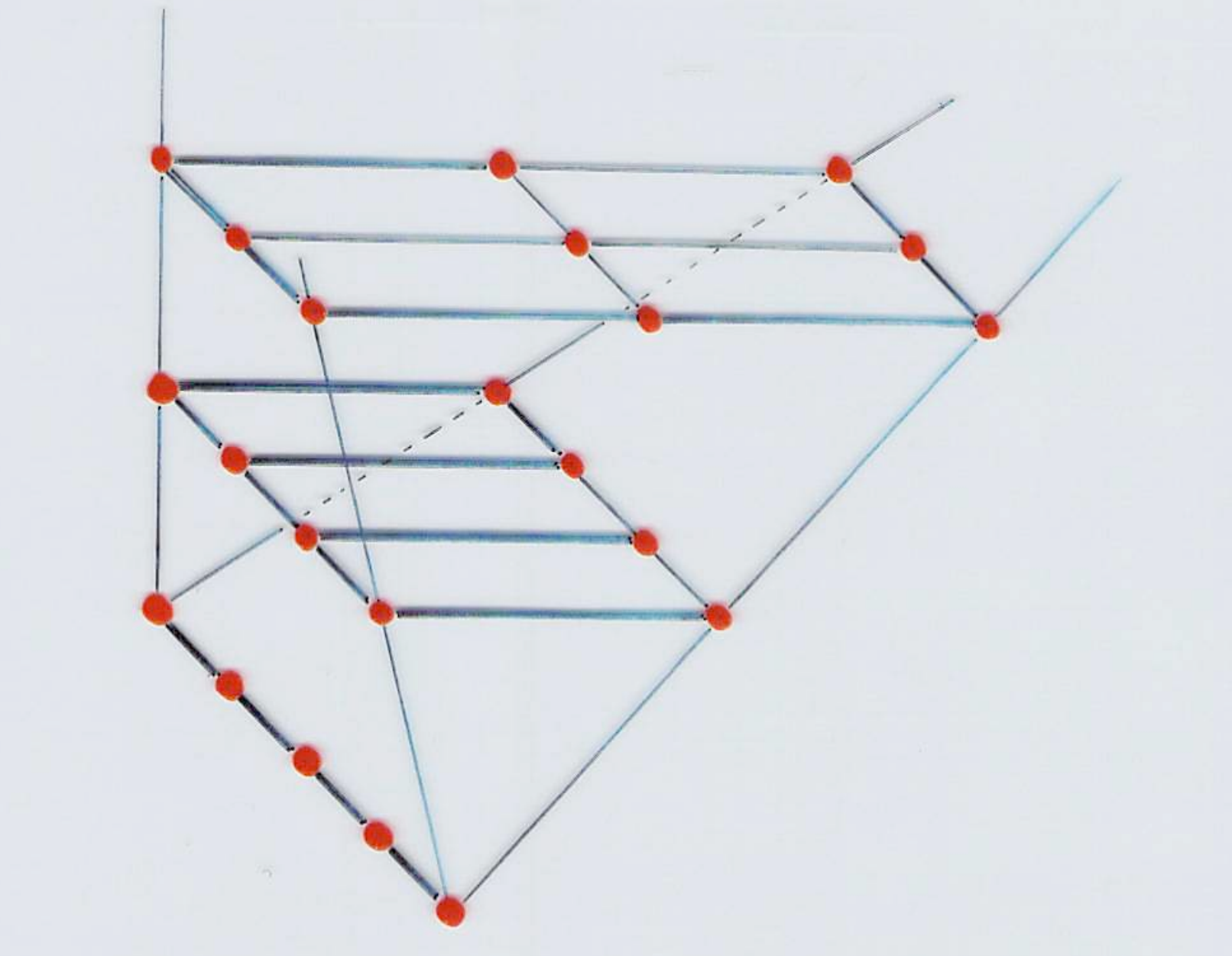}
\caption{Decomposition of $4\tetra(p,q)$ into layers.}
\label{fig:layers}
\end{figure}
The $c$-th layer ($c=2,\dots,k-1$) can then be decomposed into either $2c-1$ triangular prisms in the X direction, or into $2(k-c)-1$ prisms in the Y direction; we call these  \emph{toblerone prisms}.  
The first and last layers are themselves a toblerone prism each, in the X and the Y direction, respectively (see Figure~\ref{fig:toblerone}).
\begin{figure}
\includegraphics[scale=.6]{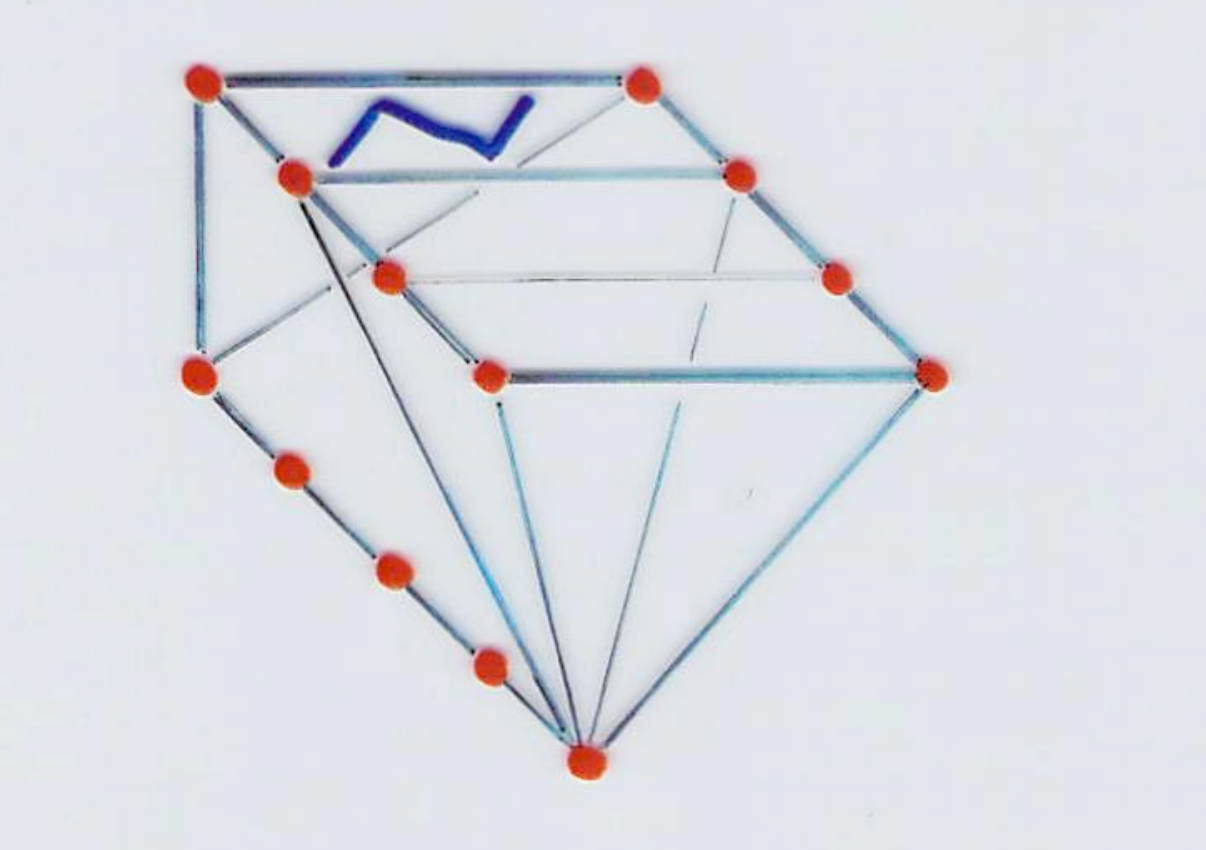}
\caption{The bottom toblerone prism in $4\tetra(p,q)$.}
\label{fig:toblerone}
\end{figure}

The following lemma describes all the possible triangulations of a toblerone prism, and generalizes the construction of Corollary~\ref{coro:k=2}:

\begin{lemma}
\label{lemma:toblerone}
Let $\Delta$ be a toblerone prism in the dilated simplex $k\tetra(p,q)$. That is, let $\Delta$ be (lattice congruent to) the convex hull of the following six points, for certain natural numbers $c,d\in \N$:
\[
(0,0,1),(cq,0,1),(0,q,1),(cq,q,1), (0,0,0),(dq,0,0).
\]
(In $k\tetra(p,q)$ we have $d=c\pm1$, but that will not be used for this lemma.)%

\noindent
Suppose that a triangulation $T_0$ of the top rectangle 
\[
\conv\{(0,0,1),(cq,0,1),(0,q,1),(cq,q,1)\}
\]
of $\Delta$ is given. Then every triangulation $T$ of $\Delta$ that extends $T_0$ has the following description:
\begin{compactitem}[ $\bullet $]
\item For each $i=0,\dots,d-1$, there is a  Y-monotone path in $T_0$ going from a vertex of the form $(x_i,0,1)$ to one of the form $(x'_i,q,1)$ joined to the edge $\conv\{(qi,0,0), (q(i+1),0,0)\}$ and these $d$ paths are each above the previous one in the X direction (in the weak sense; parts of one path can coincide with the next one. In fact, all the paths could well be the same~one).%
\item The point $(qi,0,0)$ is joined to the part of $T_0$ lying between the $(i-1)$-th and the $i$-th paths.
\end{compactitem}
Conversely, for any choice of $d$ paths in $T_0$ as above there is a unique triangulation $T$ of $\Delta$ obtained from them.

Moreover, the triangulation $T$ so obtained is unimodular if, and only if, all the Y-monotone paths used are maximal (that is, they have length $q$).
\end{lemma}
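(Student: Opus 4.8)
The plan is to exploit that $\Delta$ is sandwiched between the hyperplanes $z=0$ and $z=1$. First I would record the lattice points and the combinatorics of $\Delta$: since $\Lambda_{p,q}$ meets only the hyperplanes $\{z\in\Z\}$, every lattice point of $\Delta$ is either one of the $d+1$ points $v_i:=(qi,0,0)$ ($i=0,\dots,d$) on the bottom edge $e_0=\conv\{(0,0,0),(dq,0,0)\}$, or a lattice point of the top rectangle $R=\conv\{(0,0,1),(cq,0,1),(0,q,1),(cq,q,1)\}$; in particular every tetrahedron of $T$ has exactly one or two vertices on $e_0$. The two facets of $\Delta$ through $e_0$ are the trapezoids $F_0=\Delta\cap\{y=0\}$ and $F_1=\Delta\cap\{y=qz\}$, which satisfy $e_0=F_0\cap F_1$ and contain no lattice points other than those on their two horizontal edges, and the inequality $0\le y\le qz$ holds on all of $\Delta$. (The stated description is cleanest, and is all that is needed later, when every $v_i$ is a vertex of $T$; this is automatic when $T$ is unimodular, since $e_0$ has lattice length $d$ and so cannot be an edge of a unimodular simplex, and I will focus on that case.)

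Next, the forward direction. Let $e'=\conv\{v_i,v_{i+1}\}$ be a unit bottom segment; it is an edge of $T$ lying in the ridge $e_0=F_0\cap F_1$, so $\link_T(e')$ is a path. I claim this path lies in $T_0$ and is $Y$-monotone. Its two ends are the links of $e'$ inside the $2$-dimensional triangulations $T|_{F_0}$ and $T|_{F_1}$; since those trapezoids have no interior lattice points and $e'$ sits on a boundary edge, each such link is a single vertex on the top edge of the trapezoid, i.e.\ a vertex $(x_i,0,1)$, resp.\ $(x'_i,q,1)$, of $R$. Any intermediate vertex of the path is a lattice point of $\Delta$ lying neither on $e_0$ nor on $F_0\cup F_1$, hence a lattice point of $R$ with $0<y<q$, and consecutive vertices of the path are joined by edges of $T$ contained in $R$, i.e.\ by edges of $T_0$; so the path lies in $T_0$. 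For monotonicity: a tetrahedron $\conv\{v_i,v_{i+1},P,P'\}$ of $\Star_T(e')$ has Euclidean volume $\tfrac{q}{6}\,|y_P-y_{P'}|$, so consecutive vertices of the path have distinct $y$-coordinates; and since the tetrahedra of $\Star_T(e')$ tile, page by page, the wedge $\{0\le y\le qz\}$ around $e'$, sweeping it in angular order from the $F_0$-side ($y=0$) to the $F_1$-side ($y=q$), the $y$-coordinates along the path must increase monotonically from $0$ to $q$. Thus we obtain $d$ $Y$-monotone paths $P^{(0)},\dots,P^{(d-1)}$ in $T_0$. That they are weakly nested in the $X$-direction, and that $v_i$ is coned to the part of $T_0$ between $P^{(i-1)}$ and $P^{(i)}$, then follows by inspecting $\Star_T(v_i)$: its tetrahedra either contain an adjacent bottom segment (the curtains over $P^{(i-1)}$ and $P^{(i)}$) or are pyramids with apex $v_i$ over triangles of $T_0$, and the requirement that these tile a neighbourhood of $v_i$ without overlap forces the nesting and pins down the coned region.

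For the converse and uniqueness: given weakly nested $Y$-monotone paths $P^{(0)},\dots,P^{(d-1)}$ in $T_0$, build $T$ by coning each $P^{(i)}$ to the edge $\conv\{v_i,v_{i+1}\}$ and coning each $v_i$ to the subcomplex of $T_0$ lying between $P^{(i-1)}$ and $P^{(i)}$; the nesting makes the in-between subcomplexes well defined, makes the pyramids over distinct $v_i$'s have disjoint interiors whose union is the cone over $R$, and makes pyramids and curtains meet face to face, so that together with the lattice-point count of the first step the union is all of $\Delta$ and $T$ is a triangulation extending $T_0$; uniqueness is then immediate, since by the forward direction the only possible paths are the ones just constructed. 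Finally, unimodularity: recall that $\Lambda_{p,q}$ has covolume $q$ (Lemma~\ref{lemma:lambda_pq}), so a tetrahedron is unimodular exactly when its Euclidean volume is $q/6$. A curtain tetrahedron $\conv\{v_i,v_{i+1},P,P'\}$ has volume $\tfrac q6\,|y_P-y_{P'}|$, hence is unimodular iff the step $PP'$ raises $y$ by $1$; over a whole path this says precisely that the path is maximal (length $q$). A pyramid $\conv\{v_i,\Theta\}$ has volume $\tfrac13\cdot\operatorname{area}(\Theta)$, which equals $q/6$ exactly when $\Theta$ is a unimodular triangle of $T_0$; since $T_0=T|_R$ is unimodular whenever $T$ is (restriction to a face), we conclude that, granting $T_0$ unimodular, $T$ is unimodular if and only if all $d$ paths are maximal, a non-maximal path producing a curtain tetrahedron of normalized volume $\ge 2$.

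The step I expect to be the main obstacle is the gluing in the converse direction: checking rigorously that the curtains and pyramids cut out by a nested family of paths assemble, face to face and with no gaps or overlaps, into a triangulation of the whole prism. The angular-sweep argument in the forward direction is the other delicate point, and it is exactly where the thinness of $\Delta$ is used: the inequality $0\le y\le qz$ together with the absence of lattice points interior to $\Delta$ or to its slanted facets is what forces the link of each $e'$ to be a single monotone path inside $T_0$.
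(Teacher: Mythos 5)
Your proof is correct and follows essentially the same route as the paper's: you identify the links of the unit bottom segments as $Y$-monotone paths in $T_0$, recognize the two tetrahedron types (curtains over the paths and pyramids over $T_0$), and use the same volume computation (normalized volume $|y_P-y_{P'}|$ for a curtain) to get the unimodularity criterion. You simply supply more detail than the paper does — in particular you are more careful than the original about the implicit assumption that all points $(qi,0,0)$ are vertices of $T$ and that $T_0$ is a full (hence unimodular) triangulation of the rectangle.
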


\begin{proof}
If $T$ is a triangulation that extends $T_0$ then the link of each edge $\conv\{(qi,0,0),(q(i+1),0,0)\}$ is a Y-monotone path crossing the top rectangle from one side to the other, and these $d$-paths are necessarily each above the previous one in the X-direction. Also, the point $(qi,0,0)$ must be joined to all the triangles in $T_0$ lying between the  $(i-1)$-th and the $i$-th paths, if any. Conversely, any choice of such paths clearly produces a triangulation.

For the unimodularity, observe that all the tetrahedra obtained joining a triangle in the top rectangle to a point in the bottom edge are automatically unimodular. On the other hand, a tetrahedron obtained joining  $\conv\{(qi,0,0),$ $(q(i+1),0,0)\}$ to two points
 $(a,a',1)$ and $(b,b',1)$ has volume $b'-a'$. Hence, the star of $\conv\{(qi,0,0),(q(i+1),0,0)\}$ is unimodular if, and only if, the corresponding Y-monotone path is maximal.
\end{proof}
 
In the following statement we call a \emph{dissection} of a polytope any covering by a union of simplices that are contained in it and whose relative interiors are disjoint. That is, a dissection is ``almost'' a triangulation, except that facets (and lower dimensional faces) of different simplices may intersect improperly.

\begin{corollary}
\label{coro:dissections}
For every lattice $3$-polytope and every dilation factor $k\ge 2$, $kP$ has a unimodular dissection.
\end{corollary}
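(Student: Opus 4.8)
The plan is to reduce the statement first to a single empty tetrahedron and then to the toblerone prisms already analysed in Lemma~\ref{lemma:toblerone}. Begin by triangulating $P$ into empty lattice tetrahedra: any lattice triangulation of $P$ whose vertex set is the whole of $P\cap\Lambda$ consists of empty cells, since any lattice point lying in such a cell is a vertex of the triangulation contained in that cell, hence a vertex of the cell itself; triangulations of this kind exist, for instance any pulling triangulation. Dilating by $k$ turns this into a decomposition of $kP$ into dilated empty tetrahedra, each of which, by Corollary~\ref{coro:classification} together with Lemma~\ref{lemma:lambda_pq}, is lattice equivalent to $k\tetra(p,q)$ for suitable $p$ and $q$. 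Since a dissection is permitted to have cells meeting improperly along lower-dimensional faces, it now suffices to construct, for every $p,q$ and every $k\ge 2$, a unimodular dissection of $k\tetra(p,q)$: the dissections obtained for the individual dilated tetrahedra need not match along their shared facets.

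Next I would invoke the decomposition of $k\tetra(p,q)$ described in the paragraphs preceding Lemma~\ref{lemma:toblerone}: cut $k\tetra(p,q)$ into its $k$ horizontal layers, and cut each layer into toblerone prisms. This too is only used as a dissection, so the slicing direction of each layer may be chosen freely and independently of the others. Thus the whole problem reduces to producing a unimodular triangulation of a single arbitrary toblerone prism $\Delta=\conv\{(0,0,1),(cq,0,1),(0,q,1),(cq,q,1),(0,0,0),(dq,0,0)\}$.

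Here Lemma~\ref{lemma:toblerone} does almost all the work: a unimodular triangulation of $\Delta$ is obtained from any triangulation $T_0$ of the top $cq\times q$ rectangle together with a weakly increasing family of $d$ maximal (that is, length-$q$) Y-monotone paths contained in $T_0$. So it is enough to exhibit one triangulation $T_0$ containing a single maximal Y-monotone path and then use that path $d$ times, a constant family being trivially weakly increasing. Such a $T_0$ is easy to build: the top rectangle is a strip of $c$ copies of the fundamental square, and inside the first copy the maximal Y-path of Definition~\ref{defi:max-path} --- which meets each of the rows $y=1,\dots,q-1$ in exactly one lattice point, and which we extend by two opposite corners at $y=0$ and $y=q$ --- is a non-self-intersecting Y-monotone path of length $q$. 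It cuts the first fundamental square into two lattice polygons whose only lattice points lie on their boundaries, so each of these triangulates into empty (hence unimodular) triangles; triangulating the remaining $c-1$ fundamental squares arbitrarily completes $T_0$. Feeding this $T_0$ and this single path into Lemma~\ref{lemma:toblerone} yields a unimodular triangulation of $\Delta$, and gluing all these triangulations over every prism, every layer, and every dilated tetrahedron produces the desired unimodular dissection of $kP$.

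The steps needing the most care are the two places where the combinatorics of the decomposition has to be pinned down precisely: that every horizontal layer of $k\tetra(p,q)$ really does decompose into toblerone prisms of exactly the shape covered by Lemma~\ref{lemma:toblerone} (the pieces at a corner of a layer being degenerate prisms with $c=0$ or $d=0$, i.e.\ simplices, which are harmless), and that the path constructed above genuinely lies in $T_0$. Both rely on the fact, established in Section~\ref{sec:k=2}, that the lattice points of a fundamental square form the pattern of a permutation matrix; everything else is routine bookkeeping.
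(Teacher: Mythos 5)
Your proposal is correct and follows essentially the paper's own (one\nobreakdash-sentence) proof: triangulate $P$ into empty tetrahedra, cut each dilated cell into horizontal layers and then into toblerone prisms, and triangulate each prism independently via Lemma~\ref{lemma:toblerone} using maximal Y\nobreakdash-paths, the point being that a dissection requires no compatibility across shared facets. The only slip is the parenthetical claim that degenerate prisms with $c=0$ would be ``harmless simplices'' --- such a piece would be a copy of $\tetra(p,q)$ itself, hence empty but not unimodular --- but no such pieces arise in the decomposition you invoke (the first and last layers are single toblerones and the intermediate layers slice into prisms with $c,d\ge 1$), so this does not affect the argument.
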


\begin{proof}
Consider first a triangulation $T$ of $P$ into empty tetrahedra. Then, partition each dilated simplex $k\Delta$, $\Delta\in T$, into toblerone prisms as above, and triangulate each prism independently into unimodular tetrahedra.
\end{proof}

In order to achieve true triangulations this way we need to ensure that the triangulations of the individual prisms agree on their boundaries. For this we observe that, with the notation of Lemma~\ref{lemma:toblerone}, the triangulation obtained in the two non-horizontal quadrilaterals of each toblerone prism are determined by the following property: Each edge $\conv\{(qi,0,0),(q(i+1),0,0)\}$ is joined to the points $(x_i,0,1)$ and $(x'_i,q,1)$ at which the $i$-th Y-path starts and ends. With this in mind we consider the following two cases, in which we have some control on the boundary triangulation:

\begin{lemma}
\label{lemma:arbitrary_boundary}
In the conditions of Lemma~\ref{lemma:toblerone} suppose that $T_0$ contains a maximal Y-path joining every point of the form $(x,0,1)$ to every point of the form $(x',q,1)$. Then a unimodular triangulation $T$ of the toblerone can be obtained extending any arbitrarily chosen triangulation of the non-horizontal boundary of the toblerone.
\end{lemma}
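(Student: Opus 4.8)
The plan is to reduce everything to Lemma~\ref{lemma:toblerone}, which says that a triangulation $T$ of the toblerone $\Delta$ extending $T_0$ is the same thing as a choice of $d$ maximal $Y$-monotone paths $P_0,\dots,P_{d-1}$ in $T_0$, each weakly to the right (in the $X$-direction) of the previous one, with $P_i$ running from a vertex $(x_i,0,1)$ to a vertex $(x_i',q,1)$; and that such a $T$ is automatically unimodular. So the task is: given any prescribed triangulation $T_\partial$ of the non-horizontal boundary of $\Delta$, produce a nested family of maximal $Y$-paths of $T_0$ whose starting and ending vertices make the induced triangulation of the non-horizontal boundary equal to $T_\partial$.

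First I would record what the induced boundary triangulation actually sees. The non-horizontal boundary consists of two triangular ``end'' faces together with two slanted quadrilateral faces lying in the planes $y=0$ and $y=qz$. A direct check shows the two end triangles contain no lattice point besides their three vertices, so they are single simplices in any triangulation; and each slanted face is a thin lattice trapezoid (contained between two consecutive lattice lines), whose unimodular triangulations are exactly parametrized by nondecreasing sequences $0\le x_0\le\dots\le x_{d-1}\le cq$ of vertices along its upper edge (at height $z=1$). By the explicit description in Lemma~\ref{lemma:toblerone}, the triangulation of the $y=0$ face induced by the $P_i$ is governed by the starting vertices $x_i$ and that of the $y=qz$ face by the ending vertices $x_i'$, the two sequences being prescribable independently since the faces share only the uniformly subdivided bottom edge. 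Hence from $T_\partial$ I can read off two nondecreasing sequences $(x_i)$ and $(x_i')$, and the problem becomes: find maximal $Y$-paths $P_0\le\dots\le P_{d-1}$ in $T_0$ with $P_i$ joining $(x_i,0,1)$ to $(x_i',q,1)$.

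The heart of the matter is a closure property of $T_0$: if $P$ and $Q$ are maximal $Y$-paths in $T_0$, then the path $P\vee Q$ that at each height $y=0,1,\dots,q$ uses the rightmost of the two vertices of $P$ and $Q$ at that height is again a maximal $Y$-path in $T_0$, running from the rightmost of the two starting vertices to the rightmost of the two ending vertices. To prove this I would check the edges of $P\vee Q$ one height-step at a time; the only nontrivial case is a step where the rightmost vertex switches from one path to the other between heights $i$ and $i+1$. There the $i$-to-$(i+1)$ edges of $P$ and of $Q$ would cross in their relative interiors unless they share an endpoint, which is impossible in the straight-line triangulation $T_0$; so they share a vertex, and then the needed edge of $P\vee Q$ is literally an edge of $P$ or of $Q$. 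Granting this, I pick (using the hypothesis on $T_0$) for each $i$ an arbitrary maximal $Y$-path $R_i$ in $T_0$ from $(x_i,0,1)$ to $(x_i',q,1)$ and set $P_i:=R_0\vee\dots\vee R_i$; since $(x_i)$ and $(x_i')$ are nondecreasing, $P_i$ starts at $x_i$ and ends at $x_i'$, and $P_{i+1}=P_i\vee R_{i+1}$ lies weakly to the right of $P_i$, so the $P_i$ form the desired nested family. Feeding them into Lemma~\ref{lemma:toblerone} yields a unimodular triangulation of $\Delta$ whose restriction to the non-horizontal boundary is, by the first paragraph, exactly $T_\partial$.

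I expect the closure property $P\vee Q\in T_0$ to be the only real obstacle; the rest is bookkeeping, except that one must be a little careful in identifying the non-horizontal boundary faces and verifying that the end triangles are empty and the slanted faces thin, so that reading $(x_i)$ and $(x_i')$ off an arbitrary $T_\partial$ is both well-defined and captures all of $T_\partial$.
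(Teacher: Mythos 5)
Your proof is correct and follows the same route as the paper's, whose entire proof is the single sentence ``Here we have complete freedom to choose the points $(x_i,0,1)$ and $(x'_i,q,1)$ for each Y-path,'' relying on the observation stated just before the lemma that these endpoints determine the triangulation of the non-horizontal boundary. The additional work you do --- in particular the join/closure argument producing a \emph{nested} family of maximal Y-paths with prescribed endpoints --- fills in a point the paper's one-liner skips: nestedness is automatic for the intended $T_0$ (all fundamental-square vertices coned to a single maximal Y-path, so that all maximal Y-paths share their interior vertices), but for a general $T_0$ satisfying the hypothesis it genuinely requires an argument like yours.
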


One way to obtain a $T_0$ in the conditions of the statement is to choose a single maximal Y-path to which all the vertices of the fundamental squares are joined. Figure~\ref{fig:arbitrary_boundary} is an illustration of this.
\begin{figure}
\includegraphics[scale=.5]{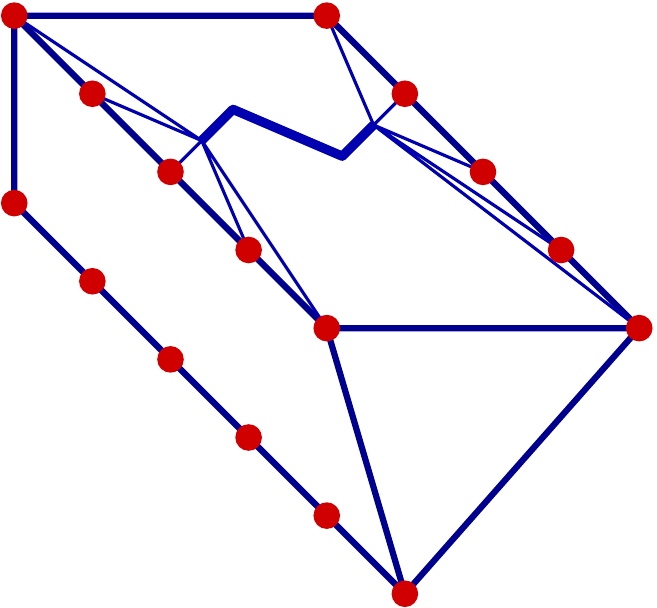}
\caption{Choosing a $T_0$ that contains maximal Y-paths joining every point $(x,0,1)$ to every point $(x',q,1)$ gives complete freedom for the triangulation of the sides of the toblerone.}
\label{fig:arbitrary_boundary}
\end{figure}

\begin{proof}
Here we have complete freedom to choose the points $(x_i,0,1)$ and $(x'_i,q,1)$ for each Y-path.
\end{proof}

\begin{lemma}
\label{lemma:alternating}
In the conditions of Lemma~\ref{lemma:toblerone} suppose that $d=c+1$, so that it makes sense to say whether $T$ has standard boundary. Suppose that $T_0$ triangulates the fundamental squares alternating between using the maximal X-path and the maximal Y-path from one to the next. Then, a unimodular triangulation $T$ of the toblerone can be obtained having standard boundary except for four boundary edges, one on either extreme of either side of the toblerone.
\end{lemma}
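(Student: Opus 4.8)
The plan is to invoke Lemma~\ref{lemma:toblerone}: we must exhibit $d=c+1$ maximal Y-paths $\pi_0\le\pi_1\le\cdots\le\pi_c$ inside $T_0$ whose associated triangulation $T$ is as claimed. Write the top rectangle as the left-to-right union of fundamental squares $Q_0,\dots,Q_{c-1}$, and call $Q_j$ of \emph{Y-type} if $T_0$ triangulates it with its maximal Y-path and of \emph{X-type} otherwise; by hypothesis the types alternate, so for every $i\in\{1,\dots,c-1\}$ exactly one of $Q_{i-1},Q_i$ is of Y-type.

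First I would determine which maximal Y-paths $T_0$ contains. Such a path has length $q$, hence meets a single lattice point at each of the $y$-coordinates $0,1,\dots,q$; for $y=1,\dots,q-1$ these are interior points of fundamental squares, and no edge of $T_0$ joins interior points of two distinct squares, since the squares are triangulated separately. Thus the path runs, for $y=1,\dots,q-1$, through the $q-1$ interior points of one Y-type square $Q_j$ in their Y-order, and it begins and ends at corners of $Q_j$. Moreover the edge of $Q_j$ at $y=0$ is a boundary edge of the top rectangle, so it lies in exactly one triangle of $T_0$; for that triangle to be unimodular its apex must have $y$-coordinate $1$, i.e.\ must be the unique interior point of $Q_j$ with $y=1$. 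Hence that interior point is joined in $T_0$ to \emph{both} bottom corners of $Q_j$, and symmetrically the interior point with $y=q-1$ is joined to both top corners. Consequently, for each Y-type $Q_j$ and each choice of one of its bottom corners and one of its top corners, $T_0$ contains the maximal Y-path through the interior of $Q_j$ with those two endpoints.

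Now I would choose the paths. For $1\le i\le c-1$ let $\pi_i$ be the maximal Y-path through the interior of whichever of $Q_{i-1},Q_i$ is of Y-type, with both of its endpoints at the corners of that square with $x$-coordinate $iq$. For $i=0$: if $Q_0$ is of Y-type, take $\pi_0$ through $Q_0$ with endpoints its left corners $(0,0,1),(0,q,1)$; otherwise $Q_1$ is of Y-type and we set $\pi_0:=\pi_1$. Symmetrically: if $Q_{c-1}$ is of Y-type, take $\pi_c$ through $Q_{c-1}$ with endpoints its right corners $(cq,0,1),(cq,q,1)$; otherwise $Q_{c-2}$ is of Y-type and we set $\pi_c:=\pi_{c-1}$. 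A maximal Y-path through $Q_j$ has all $x$-coordinates in $[jq,(j+1)q]$, and the ``left-corner'' and ``right-corner'' variants through a fixed $Q_j$ coincide for $y=1,\dots,q-1$ and are ordered at $y=0$ and $y=q$; running through the few possibilities for adjacent $\pi_i,\pi_{i+1}$ one checks $\pi_0\le\cdots\le\pi_c$. Since all $\pi_i$ are maximal, Lemma~\ref{lemma:toblerone} yields a unimodular triangulation $T$ of the toblerone extending $T_0$.

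Finally I would inspect the boundary of $T$. The two triangular faces of the toblerone carry only three lattice points each and span unimodular triangles whose edges are parallel to edges of $\tetra(p,q)$, so $T$ restricts there to the standard (and only) triangulation. On each trapezoidal face, Lemma~\ref{lemma:toblerone} describes the induced triangulation explicitly: the triangle cut off by $S_i$ has apex the endpoint of $\pi_i$ on that face, and between two such triangles one cones the intervening bottom vertex $(iq,0,0)$ to the arc of the top edge of the face lying between the endpoints of $\pi_{i-1}$ and $\pi_i$. When each of these endpoints has $x$-coordinate $iq$ — which, by our choice of the $\pi_i$, happens exactly when both corner squares are of Y-type — this is precisely the standard triangulation of the face. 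At an end whose corner square is of X-type one has $\pi_0=\pi_1$ (resp.\ $\pi_{c-1}=\pi_c$), and the only change from standard on each trapezoidal face is that the extreme quadrilateral at that end is split by the other diagonal: one non-standard edge. So each ``bad'' end contributes exactly one non-standard edge to each of the two trapezoidal faces, and with two ends this gives at most four non-standard edges, one at either extreme of either trapezoidal face. The step requiring most care is precisely this last one: tracking how the alternation pattern determines which ends are bad and verifying that a bad end causes only a single diagonal flip on each trapezoidal face and disturbs nothing else; one also has to be attentive to the claim in the second paragraph that the extreme interior point of a Y-type square is joined in $T_0$ to \emph{both} neighbouring corners, since this is what legitimates all four endpoint choices and forces the triangular faces to come out standard.
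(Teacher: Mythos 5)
Your proposal is correct and follows essentially the same route as the paper: choose the $i$-th maximal Y-path with both endpoints at $x$-coordinate $iq$, which the alternation hypothesis makes possible for all interior $i$ and fails at an extreme exactly when the corresponding corner square is of X-type, costing one non-standard diagonal on each trapezoidal face there. The paper states this in three sentences; you additionally verify (correctly) that the needed paths with all four endpoint choices really lie in $T_0$, via unimodularity of the triangles on the top and bottom edges of each Y-type square.
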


\begin{proof}
The boundary is standard if we can choose  $x_i=x'_i=qi$ as the extremes for the $i$-th maximal Y-path. In the conditions of the statement that is possible except perhaps for $i=0$ and $i=c$. In fact, we can get $x_0=x'_0=0$ if and only if the first fundamental square uses its maximal Y-path and we can have $x_c=x'_c=qc$ if and only if the last fundamental square uses its maximal Y-path.
\end{proof}

\begin{figure}
\includegraphics[scale=.5]{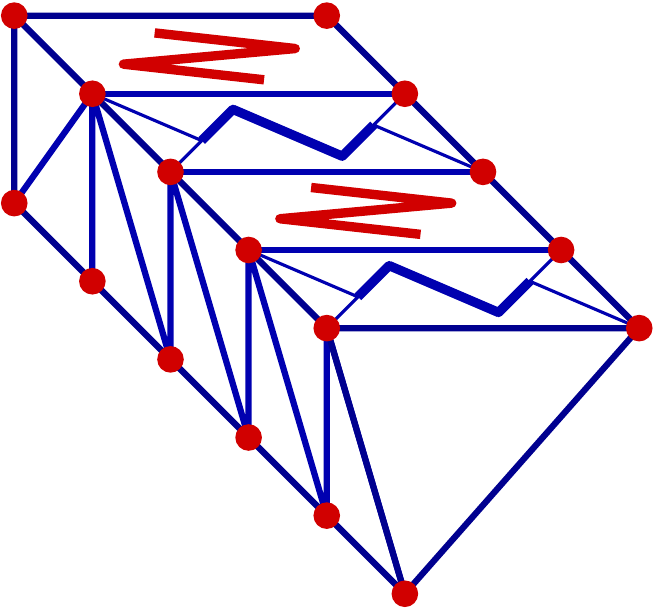}
\caption{Choosing a $T_0$ that alternates between maximal Y-paths and maximal X-paths in consecutive fundamental squares allows to get an almost standard boundary.}
\label{fig:alternating_toblerone}
\end{figure}

\begin{corollary}
\label{coro:non-standard}
$k\tetra(p,q)$ has a unimodular triangulation, for all $p,q$ and for all $k\ge 4$. Moreover, this triangulation can be made to have no more than $4$ non-standard edges in the boundary.
\end{corollary}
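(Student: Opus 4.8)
The plan is to build the triangulation by refining the coarse decomposition of $k\tetra(p,q)$ into $k$ horizontal layers and then into toblerone prisms, choosing on each prism one of the triangulations permitted by Lemma~\ref{lemma:toblerone} in such a way that (i) adjacent prisms agree on their common faces, (ii) every tetrahedron is unimodular — equivalently, every monotone path that is used is maximal — and (iii) all but at most four boundary edges are parallel to an edge of $\tetra(p,q)$.

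First I would isolate the data to be chosen. Two prisms in the same layer meet along a vertical lattice quadrilateral lying in a plane $\{x=iq\}$ or $\{y=iq\}$, and by the discussion following Lemma~\ref{lemma:toblerone} the triangulation induced there is pinned down by the endpoints of the relevant extreme maximal path of each incident prism. Two prisms in consecutive layers meet along part of the horizontal cross-section $\{z=c\}$, which is a grid of fundamental squares triangulated by a triangulation $T_c$. So the free data are the triangulations $T_c$ for $c=1,\dots,k-1$ (each one into maximal $X$- or $Y$-paths) together with, for each prism, a stack of maximal paths inside the ambient $T_c$ with prescribed endpoints on the two slanted sides.

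The main device is Lemma~\ref{lemma:arbitrary_boundary}: if the horizontal rectangle of a prism is triangulated by a ``rich'' $T_c$ — one containing a maximal path joining every bottom grid vertex to every top grid vertex, for instance the one obtained by coning all grid vertices to a single maximal path — then that prism can be completed by \emph{any} prescribed triangulation of its two slanted sides. I would use this for the layers away from the top and bottom, taking the cross-sections there to be rich: this meets all the compatibility constraints across slanted faces at once and lets us impose the standard triangulation on the part of $\partial(k\tetra(p,q))$ that is made up of slanted sides of prisms. There remain the first and last layers, which are single toblerone prisms (in the $X$- and in the $Y$-direction) to which Lemma~\ref{lemma:alternating} applies: taking $T_1$ and $T_{k-1}$ to alternate between the maximal $Y$- and the maximal $X$-path on consecutive fundamental squares yields a triangulation that is standard on $\partial(k\tetra(p,q))$ except at the two ends of the two slanted facets of that layer, and the count of exceptional edges there is controlled entirely by the parity of the alternating pattern — it is zero when the extreme fundamental square uses the ``right'' path and two otherwise, at each of the two end layers. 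In every case this totals at most four.

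The subtle point, and where I expect the real work to lie, is the book-keeping at the cross-section(s) where the decomposition direction has to flip: the first layer is an $X$-toblerone and the last is a $Y$-toblerone, so somewhere in between the layers switch from being cut into $X$-prisms to being cut into $Y$-prisms, and at that switch the same grid must simultaneously support the richness demanded by the prisms below and by the prisms above, which are ``transposed'' to one another. Checking that this can be arranged — and, conversely, seeing why it breaks down for too few layers — is exactly what forces the hypothesis $k\ge4$: one needs enough middle layers to separate the bottom (``$Y$-rich'') regime from the top (``$X$-rich'') regime and to keep the constraints coming from the two edges of $k\tetra(p,q)$ joining $(0,0,0)$ to $(kq,0,0)$ and joining $(0,0,k)$ to $(0,kq,k)$ from interfering with one another. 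Once this is in place, the at most four exceptional boundary edges are precisely the ones clustered near those two edges, and the same regular-refinement argument as in the proof of Proposition~\ref{prop:tetragonal_regular} — refine the coarse regular subdivision into layers and prisms by a single generic weight vector realizing all of the local choices — shows that the triangulation obtained can moreover be taken to be regular.
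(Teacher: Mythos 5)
Your scaffolding is the paper's: decompose $k\tetra(p,q)$ into horizontal layers and toblerone prisms, use Lemma~\ref{lemma:arbitrary_boundary} (rich cross-sections) to decouple the compatibility constraints on slanted faces, and use Lemma~\ref{lemma:alternating} where standardness must be sacrificed. But there is a genuine gap exactly at the point you flag as ``where I expect the real work to lie'': the cross-section at the height $c$ where the decomposition switches from $X$-prisms to $Y$-prisms. You propose to make that grid simultaneously ``rich'' for the row-prisms below and the (transposed) column-prisms above, but this is impossible for non-tetragonal simplices: row-richness forces each row's grid vertices to be coned to a single square's maximal $Y$-path, column-richness forces the analogous coning to a maximal $X$-path, and by Corollary~\ref{coro:k=2} ((3)$\Rightarrow$(1)) a fundamental square of $\tetra(p,q)$ cannot carry both its maximal $X$-path and its maximal $Y$-path unless $p\equiv\pm1$. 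The paper's actual resolution is to place the alternating pattern \emph{at that interface} (which is why one needs a height $c$ with $2\le c\le k-2$, i.e.\ $k\ge4$, so that every row and every column of the $c\times(k-c)$ grid contains squares of both types), apply Lemma~\ref{lemma:alternating} to the prisms on both sides of it, and absorb the non-standard edges there: a priori eight, reduced to four because each corner square of the interface produces a defect in only one of its two incident layers. Correspondingly, your placement of the alternation at $T_1$ and $T_{k-1}$ is in the wrong place: the first and last layers can be kept fully standard (all squares at heights $1,\dots,c-1$ carry maximal $Y$-paths, all at heights $c+1,\dots,k-1$ carry maximal $X$-paths), while the unavoidable defects live at the switching height. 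As written, your construction would leave the interface untriangulated (or force extra defects there on top of the ones you already spend at the two ends), so the bound of four is not established.

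A secondary point: the concluding claim of regularity is neither part of the statement nor justified. The regular-refinement lemma you invoke requires one and the same weight vector $\omega$ in every cell of the coarse subdivision, whereas here different prisms are refined by genuinely different combinatorial choices (different maximal paths with different endpoints), and the paper explicitly declines to claim regularity for the constructions of this section. I would drop that sentence.
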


\begin{proof}
Choose a height $c$ between $2$ and $k-2$. Let $\tetra(p,q)_{[0,c]}:=\tetra(p,q)\cap \{(x,y,z)\in \R^3: z\le c\}$ and $\tetra(p,q)_{[c,k]}:=\tetra(p,q)\cap \{(x,y,z)\in \R^3: z\ge c\}$ the lower and upper parts. Divide the lower part into toblerones in the X direction and the upper into toblerones in the Y direction. Triangulate all the fundamental squares at height less than $c$ using their maximal Y-path, and those at height greater than $c$ using the maximal X-path. Triangulate the fundamental squares alternating the 
the maximal X- and Y-paths, as shown in Figure~\ref{fig:alternating}.

\begin{figure}
\includegraphics[scale=.5]{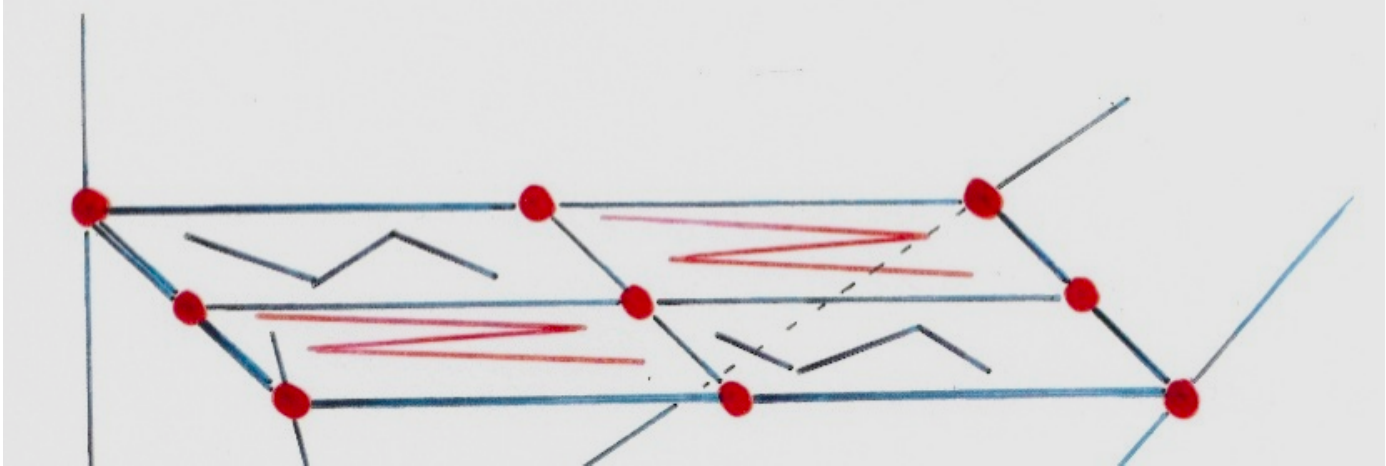}
\caption{Alternating X-chains and Y-chains.}
\label{fig:alternating}
\end{figure}

We now triangulate the toblerones in a way compatible with the triangulations of the fundamental squares, and starting with those incident to the fundamental squares at height $c$. For these toblerones, the triangulations of the fundamental squares are in the conditions of Lemma~\ref{lemma:alternating}. In particular, we triangulate them with at most four non-standard edges in the boundary of each, eight in total in the boundary of $k\tetra(p,q)$. However, this eight can be turned to a four if we take into account that each of the four corner fundamental squares at height $c$ produces a non-standard edge only in one of the two layers incident to it, not both.

Once this is done, since the rest of the toblerones are in the conditions of Lemma~\ref{lemma:arbitrary_boundary}, we can triangulate them so that their boundary triangulations is on the one hand standard in the boundary of $k\tetra(p,q)$ and on the other hand compatible with the toblerones adjacent to them if they were triangulated earlier.
See Figure~\ref{fig:non-standard} for a picture illustrating the whole process.
\end{proof}

\begin{figure}
\includegraphics[scale=.50]{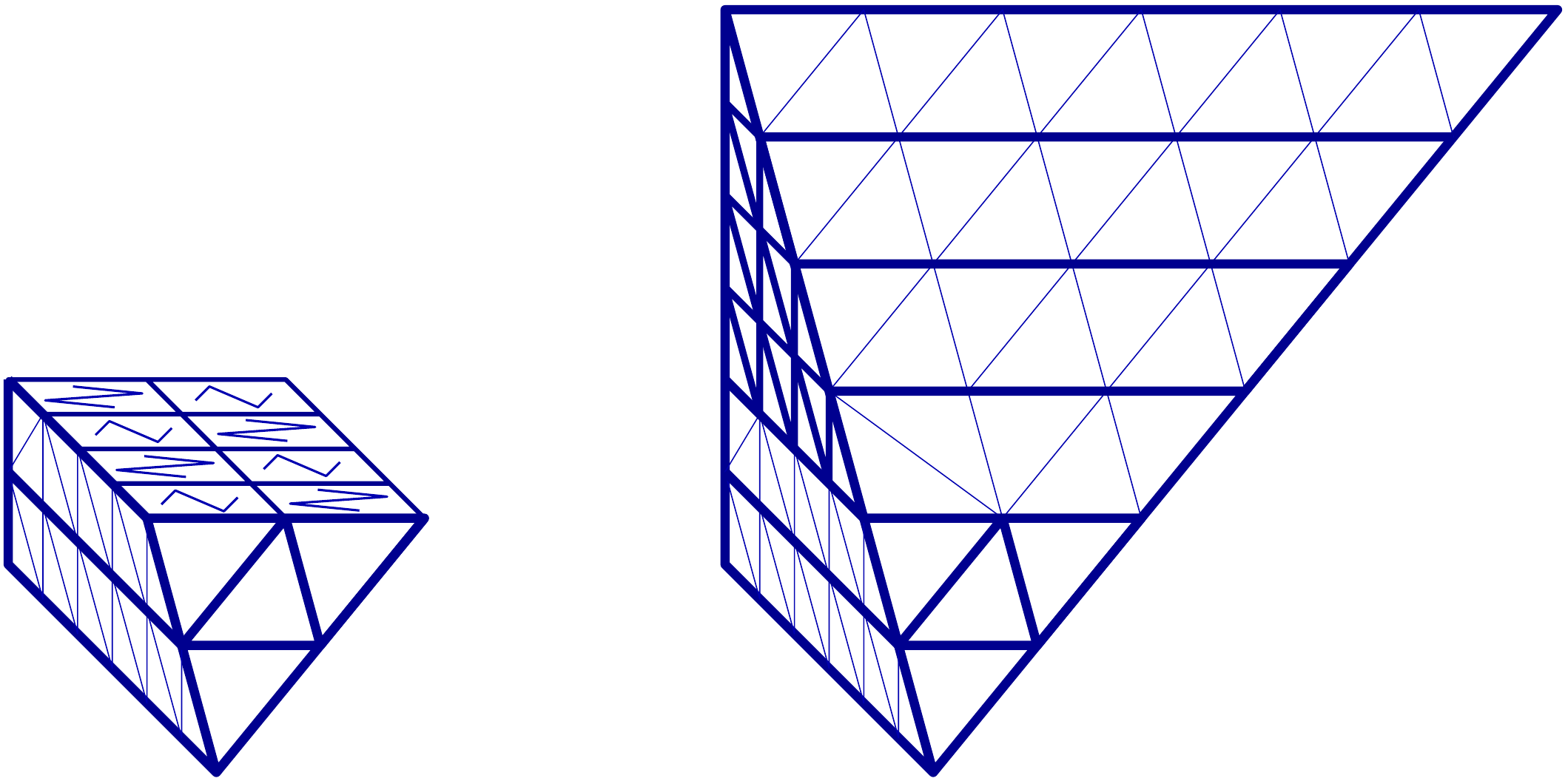}
\caption{Illustration of the proof of Corollary~\ref{coro:non-standard}, for $k=6$ and $c=2$. The bottom part (left) is divided into prisms in the X direction and the top part into prisms in the Y direction.}
\label{fig:non-standard}
\end{figure}

One very special case of the procedure of Lemma~\ref{lemma:alternating} and Corollary~\ref{coro:non-standard} arises for tetragonal empty simplices $\tetra(1,q)$. Since in a tetragonal simplex the maximal X-path and the maximal Y-path coincide, the construction of Lemma~\ref{lemma:alternating} gives the following:
 
\begin{corollary}
\label{coro:tetragonal}
$k\tetra(1,q)$ has a unimodular triangulation with standard boundary, for all $q$ and for all $k\ge 2$.
\qed
\end{corollary}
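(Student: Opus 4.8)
The construction is the one in the proof of Corollary~\ref{coro:non-standard}, simplified by the tetragonal hypothesis, and it will work uniformly for all $k\ge2$. The plan is to decompose $k\tetra(1,q)$ into its $k$ horizontal layers, fix a height $c$ with $1\le c\le k-1$, cut the lower part $\{z\le c\}$ into toblerone prisms in the $X$-direction and the upper part $\{z\ge c\}$ into toblerone prisms in the $Y$-direction (exactly as in Corollary~\ref{coro:non-standard}), and triangulate \emph{every} fundamental square, at every height, by its main diagonal. Because $\tetra(1,q)$ is tetragonal the maximal $X$-path and the maximal $Y$-path of each fundamental square both coincide with this diagonal, and the diagonal is at once $X$-monotone, $Y$-monotone and of length $q$. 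Hence this single choice of $T_0$ places every toblerone, whichever of the two directions it runs in, in the hypotheses of Lemma~\ref{lemma:toblerone}: each prism receives a unimodular triangulation extending $T_0$, and two prisms that share a fundamental square agree on it, since both see it triangulated by the diagonal. This dispenses with the ``alternating'' device of Lemma~\ref{lemma:alternating} and with the restriction $2\le c\le k-2$ of Corollary~\ref{coro:non-standard} (the only reason for that restriction was to leave room for the alternation), so $c=1$ is admissible and the cases $k=2,3$ are covered.

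It remains to make the boundary standard. Inside each toblerone one still has to choose, for its $i$-th link-path, a maximal monotone path of $T_0$ with prescribed endpoints $(x_i,0,1)$ and $(x'_i,q,1)$; to make the non-horizontal boundary quadrilaterals standard one wants $x_i=x'_i=iq$. The key observation is that the diagonal triangulation of a single fundamental square $[jq,(j+1)q]\times[0,q]$ contains, besides the diagonal itself, the ``left-hugging'' maximal $Y$-path
\[
(jq,0)\to(jq+1,1)\to\cdots\to(jq+q-1,q-1)\to(jq,q)
\]
and, symmetrically, a ``right-hugging'' one beginning and ending at the column $x=(j+1)q$ (and, by the $X\leftrightarrow Y$ symmetry, the corresponding maximal $X$-paths for the $Y$-oriented prisms). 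Using these one may take $x_i=x'_i=iq$ throughout; the weak-monotonicity requirement of Lemma~\ref{lemma:toblerone} between consecutive link-paths is then automatic, since consecutive link-paths lie in the same fundamental square or in two adjacent ones and the hugging paths interleave in the required way. With this choice each non-horizontal face of each toblerone is triangulated using only edges parallel to edges of $k\tetra(1,q)$ — i.e.\ standardly — and since the standard triangulation of a lattice triangle or trapezoid is unique, these boundary triangulations agree with one another and with the standard triangulations of the facets of $k\tetra(1,q)$. This is exactly where the tetragonal hypothesis earns back the four edges that were non-standard in Lemma~\ref{lemma:alternating}: there the extreme fundamental squares could fail to use their maximal $Y$-path, whereas here every square uses the diagonal, which is simultaneously the maximal $X$-path and the maximal $Y$-path.

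The whole content of the argument is the coincidence of the two maximal paths; what is left is bookkeeping. The step I expect to require the most care is pinning down the layer-and-toblerone decomposition for every $k\ge2$ — in particular verifying that the lower and upper parts can genuinely be cut into toblerone prisms (a middle layer is not itself a single prism, so it must be subdivided, and for $k=3$ this is outside the range of Corollary~\ref{coro:non-standard}) — together with checking that the endpoint choices $x_i=x'_i=iq$ are globally consistent across all prisms and all shared faces. I expect all of this to be routine but somewhat tedious.
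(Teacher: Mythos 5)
Your construction is correct and is essentially the paper's own argument: the paper likewise derives this corollary by running the layer-and-toblerone decomposition of Corollary~\ref{coro:non-standard} and Lemma~\ref{lemma:alternating}, observing that for $\tetra(1,q)$ the maximal X-path and the maximal Y-path of every fundamental square coincide with its diagonal, which is exactly what removes the four non-standard boundary edges. Your explicit treatment of $k=2,3$ --- where the restriction $2\le c\le k-2$ of Corollary~\ref{coro:non-standard} fails and one must note that no alternation (hence no interface layer) is needed --- is a welcome extra degree of care, since the paper's one-line justification glosses over precisely that point.
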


\section{Unimodular triangulations of $k\tetra(p,q)$ with standard boundary exist for $k\in \N\setminus\{1,2,3,5,7,11\}$}
\label{sec:standard}

In this section we combine two ideas which together will lead to the following.

\begin{theorem}
\label{thm:standard}
Let $\tetra(p,q)$ be any empty simplex. Then for every $k\in \N\setminus\{1,2,3,5,7,11\}$, \ $k\tetra(p,q)$ has a 
unimodular triangulation with standard boundary.
\end{theorem}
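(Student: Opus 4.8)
The plan is to derive Theorem~\ref{thm:standard} by combining two facts with an elementary arithmetic argument. The two facts are: \textbf{(I)} for every \emph{composite} integer $k$ and every empty tetrahedron $\tetra(p,q)$, the dilate $k\tetra(p,q)$ has a unimodular triangulation with standard boundary; and \textbf{(II)} if both $k_1\tetra(p,q)$ and $k_2\tetra(p,q)$ have unimodular triangulations with standard boundary, then so does $(k_1+k_2)\tetra(p,q)$. Fact~(I) is the promised extension of the Kantor--Sarkaria technique (Theorem~\ref{thm:KS}) from $k=4$ to all composite $k$; fact~(II) is the additivity statement.

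Granting (I) and (II), fix $\tetra(p,q)$ and let $S\subseteq\N$ be the set of $k$ for which $k\tetra(p,q)$ admits a unimodular triangulation with standard boundary. Fact~(I) says that $S$ contains every composite number, and fact~(II) says that $S$ is closed under addition. Now every even integer $k\ge4$ is composite; every odd integer $k\ge13$ can be written as $k=9+(k-9)$ with $k-9$ even and $\ge4$, hence composite, so that $k\in S+S\subseteq S$; and among the integers with $4\le k\le12$ the only odd ones that are neither composite nor covered by the previous case are $5$, $7$, and $11$. Therefore $\N\setminus\{1,2,3,5,7,11\}\subseteq S$, which is exactly Theorem~\ref{thm:standard}. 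Since this holds for every empty $\tetra(p,q)$, we are done.

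It remains to prove (I) and (II); I expect (I) to be the main obstacle. For (II), the key observation is that ``standard on the boundary'' pins down the induced triangulation of the boundary \emph{uniquely} (see Definition~\ref{defi:standard} and the discussion after it), so that any two standard-boundary triangulations of polytopes sitting inside $(k_1+k_2)\tetra(p,q)$ agree automatically along shared faces. I would therefore cut $(k_1+k_2)\tetra(p,q)$ into a homothetic copy of $k_1\tetra(p,q)$ placed at one vertex, a homothetic copy of $k_2\tetra(p,q)$ placed at a second vertex, and a leftover ``collar,'' and check that the collar should decompose into toblerone prisms of the type handled in Section~\ref{sec:non-standard}. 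Using the given triangulations on the two small copies and Lemma~\ref{lemma:arbitrary_boundary} on the collar prisms --- with a $T_0$ built from maximal Y-paths through \emph{all} vertices of the fundamental squares, which leaves the non-horizontal sides of each prism completely free --- one obtains a triangulation that is standard on $\partial\big((k_1+k_2)\tetra(p,q)\big)$ and matches the canonical boundary triangulations of the two small copies on the internal interfaces. The routine-but-careful part is the explicit geometry of this decomposition and the verification that the collar really does split into prisms of the required shape.

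For (I), writing $k=ab$ with $a,b\ge2$, I would run the layered construction of Section~\ref{sec:non-standard} but exploit the factorization to remove the (at most four) non-standard boundary edges produced by Corollary~\ref{coro:non-standard}. Those bad edges all sit near a single cut height; the extra thickness supplied by the factor $b\ge2$ should allow one to arrange the alternation between maximal X-paths and maximal Y-paths (as in Lemma~\ref{lemma:alternating}) over $a$ consecutive slabs so that in each slab the relevant fundamental squares behave as in the tetragonal case (Corollary~\ref{coro:tetragonal}), where the two maximal paths coincide and no non-standard edge arises. The crux --- and where I expect the real difficulty to lie --- is in orchestrating the many individual toblerone triangulations so that they simultaneously match each other on every shared facet and restrict to the standard triangulation on the outer boundary; this is precisely the extension of Kantor and Sarkaria's bookkeeping that the theorem calls for.
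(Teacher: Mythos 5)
Your overall architecture is exactly the paper's: establish the result for composite $k$, establish additivity under $+$, and finish with the arithmetic observation that these two facts cover all of $\N\setminus\{1,2,3,5,7,11\}$ (your reduction of every odd $k\ge13$ to $9+(k-9)$ is correct and is the intended argument). Your treatment of (II) is also essentially the paper's proof of Theorem~\ref{thm:k1+k2}: cut homothetic copies of $k_1\tetra(p,q)$ and $k_2\tetra(p,q)$ at two vertices and fill the remaining collar with toblerone prisms; the paper runs all these prisms in the Y-direction and uses the maximal X-path in every fundamental square, which gives standard boundary directly, whereas you invoke the arbitrary-boundary freedom of Lemma~\ref{lemma:arbitrary_boundary} --- a minor and workable variation. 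The key point you correctly isolate, that ``standard on the boundary'' determines the boundary triangulation uniquely and hence makes the pieces glue, is indeed what makes both (II) and the final assembly work.

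The genuine gap is in (I), and your proposed mechanism there does not work. You suggest that the extra thickness coming from a factor $b\ge2$ lets one arrange the X/Y alternation of Lemma~\ref{lemma:alternating} so that ``the relevant fundamental squares behave as in the tetragonal case.'' But the fundamental squares of $k\tetra(p,q)$ are all translates of the \emph{same} square determined by $(p,q)$ alone; whether the maximal X-path and maximal Y-path are compatible in that square is a property of $(p,q)$, not of $k$, and by Corollary~\ref{coro:k=2} they are compatible \emph{only} in the tetragonal case $p\equiv\pm1$. So no amount of layering makes a non-tetragonal square tetragonal, and the four non-standard edges of Corollary~\ref{coro:non-standard} cannot be eliminated this way. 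The paper's actual idea is different in kind: using the latitudes/longitudes and \emph{quasi-maximal} paths (Definition~\ref{defi:lati_long}), together with the Kantor--Sarkaria Lemma~\ref{lemma:compatible} asserting that a quasi-maximal X-path and a quasi-maximal Y-path always coexist in one triangulation of the fundamental square, one first triangulates $k_1\tetra(p,q)$ with standard boundary into empty tetrahedra that are \emph{tetragonal but not unimodular} (Lemma~\ref{lemma:quasi-maximal}); only then does one dilate by the second factor $k_2\ge2$ and apply Corollary~\ref{coro:tetragonal} to each tetragonal piece. This two-stage dilation through an intermediate non-unimodular tetragonal triangulation is the essential ingredient missing from your proposal.
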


We will show this be dealing first with all the composite values of $k$ and then with all the values that can be written as a sum of two smaller ones.

\subsection{Composite $k$}
Kantor and Sarkaria~\cite{KantorSarkaria} have shown that $4\tetra(p,q)$ has a unimodular triangulation with standard boundary. 
In this section we rework their ideas and show that they are sufficient to establish the result for any for any composite $k$
in place of~$4$.

Consider the fundamental square corresponding to an empty simplex $\tetra(p,q)$. We recall the following definitions from~\cite{KantorSarkaria}. As in previous sections, we let $p':=-p \bmod q$ and $p'':=-p^{-1} \bmod q$. We also introduce $p''':=p^{-1} \bmod q = -p'' \bmod q$.

\begin{definition}
\label{defi:lati_long}
We call \emph{latitudes} the lattice lines of direction $(p',1,0)$ if $p'<q/2$ or those of direction $(-p,1,0)$ otherwise.
We call \emph{longitudes} the lattice lines of direction $(1,p'',0)$ if $p''<q/2$ or $(1,-p''',0)$ otherwise.
\end{definition}

\begin{figure}
\includegraphics[scale=.5]{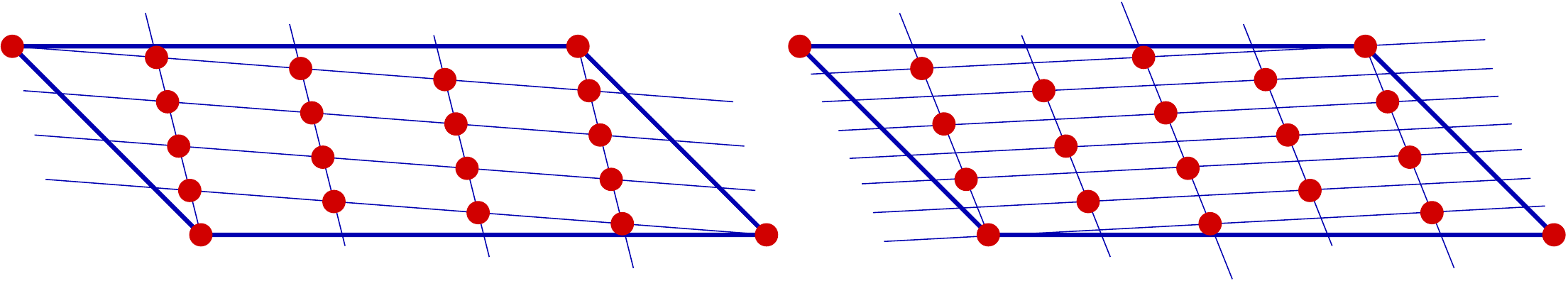}
\caption{Latitudes and longitudes for $\tetra(4,17)$ (left) and for $\tetra(5,17)$ (right).}
\label{fig:lati_long}
\end{figure}

See Figure~\ref{fig:lati_long} for some examples. An X-monotone (resp., Y-monotone) path is called \emph{quasi-maximal} if consecutive points in the path are either consecutive in the X-order (resp., Y-order) of lattice points in the fundamental square or they lie in consecutive longitudes (resp., latitudes). 

The significance of this concept is the following. Recall that a tetragonal empty tetrahedron is one equivalent to $\tetra(1,q)$.

\begin{lemma}
\label{lemma:quasi-maximal}
In the conditions of Lemma~\ref{lemma:toblerone}, if the Y-monotone paths chosen as links of the edges $\conv\{(qi,0,0),(q(i+1),0,0)\}$ are quasi-maximal, then all the tetrahedra in the triangulation $T$ are tetragonal.
\end{lemma}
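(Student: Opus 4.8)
The plan is to analyze the shape of the tetrahedra produced by the construction in Lemma~\ref{lemma:toblerone} when the chosen $Y$-monotone paths are quasi-maximal, and check that each such tetrahedron is lattice-equivalent to some $\tetra(1,q')$. Recall that a tetrahedron in the triangulation $T$ of the toblerone prism is of one of two kinds: either it is the join of a triangle of $T_0$ in the top rectangle to a vertex $(qi,0,0)$ on the bottom edge, or it is the join of an edge $\conv\{(qi,0,0),(q(i+1),0,0)\}$ to two consecutive lattice points of the $i$-th $Y$-path. For both kinds I would compute the normalized volume and then exhibit the required unimodular equivalence.

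First I would handle the second kind, since that is where the quasi-maximality hypothesis really enters. If $(a,a',1)$ and $(b,b',1)$ are two consecutive points of a quasi-maximal $Y$-path, then by definition either $b'-a'=1$ (they are consecutive in the $Y$-order of the lattice points in the fundamental square) or they lie in consecutive latitudes. In the first case the tetrahedron with apex-edge $\conv\{(qi,0,0),(q(i+1),0,0)\}$ has volume $b'-a'=1$, so it is unimodular, hence tetragonal (equivalent to $\tetra(0,1)=\tetra(1,1)$). In the second case, the two points differ by a vector parallel to a latitude direction, which is $(p',1,0)$ or $(-p,1,0)$; in either case the difference vector is primitive and has $y$-coordinate $\pm 1$, so again the volume of the tetrahedron is $1$. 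So in fact I expect \emph{all} the tetrahedra of the second kind to be unimodular, and the real content of the lemma is about the tetrahedra of the first kind.

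For a tetrahedron of the first kind — the join of a triangle $F$ of $T_0$ to the point $(qi,0,0)$ — the volume equals the lattice area of $F$ in the plane $z=1$ (with respect to the induced planar lattice), which equals some $q'$. To see it is equivalent to $\tetra(1,q')$ I would use the characterization in Remark~\ref{rem:tetragonal}: a non-unimodular empty tetrahedron is tetragonal if and only if it has width one with respect to at least two of its pairs of opposite edges. One pair of opposite edges consists of a chosen edge of $F$ together with a ``vertical'' segment from $(qi,0,0)$ — but the crucial point is that these triangles $F$ are not arbitrary: being faces of $T_0$ refined by quasi-maximal paths, each $F$ has two of its edges along longitudes and latitudes respectively. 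Translating $F$ so one vertex is at the origin, its other two vertices are lattice combinations of a longitude vector $(1,p'',0)$ (or $(1,-p''',0)$) and a latitude vector $(p',1,0)$ (or $(-p,1,0)$); since these two vectors together with $e_3$ form a lattice basis of $\Lambda_{p,q}$ with appropriate normalization, the induced transformation sends $F$, $(qi,0,0)$ and the relevant directions to the standard picture. Carrying this linear change of coordinates through, the tetrahedron becomes $\conv\{\mathbf 0, e_1, e_3, (1,q',1)\}$ up to translation and unimodular equivalence, which is exactly $\Delta(1,q')\cong\tetra(1,q')$.

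The main obstacle I anticipate is bookkeeping the linear algebra cleanly: one must verify that the edges of the triangles $F$ occurring in a quasi-maximal refinement really do point along longitude/latitude (or $X$/$Y$-unit) directions, and that the resulting change of basis is unimodular on $\Lambda_{p,q}$ and sends everything to the canonical $\tetra(1,q')$. A secondary subtlety is the case distinction in Definition~\ref{defi:lati_long} between $p'<q/2$ and $p'\ge q/2$ (and likewise for $p''$), which means the latitude/longitude directions come in two flavors; I would note that in every case the relevant direction vector has a coordinate equal to $\pm 1$, which is all that is needed both for the volume computation and for exhibiting the unimodular equivalence, so the case distinction does not actually cause branching in the argument.
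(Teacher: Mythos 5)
Your proposal rests on a misreading of Definition~\ref{defi:lati_long} and of ``quasi-maximal'', and this inverts the entire logic of the lemma. Two consecutive points of the path that ``lie in consecutive latitudes'' are points lying on two \emph{adjacent parallel lattice lines} of latitude direction; they do \emph{not} differ by a latitude vector $(p',1,0)$ --- if they did, they would lie on the \emph{same} latitude. Writing the planar lattice at height $1$ as $\Z(q,0,0)\oplus\Z\ell$ with $\ell$ the latitude vector, two points on consecutive latitudes differ by $\pm(q,0,0)+m\ell$ for some integer $m$, so their $y$-coordinates differ by $m$, which can be much larger than $1$. Since (by the volume computation in the proof of Lemma~\ref{lemma:toblerone}) the tetrahedron spanned by $\conv\{(qi,0,0),(q(i+1),0,0)\}$ and two such points has normalized volume equal to that $y$-difference, your conclusion that all tetrahedra of the ``second kind'' are unimodular is false. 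Indeed, were it true, Lemma~\ref{lemma:compatible} together with Lemma~\ref{lemma:toblerone} would produce a unimodular triangulation of $2\tetra(p,q)$ for every $p,q$, contradicting Corollary~\ref{coro:k=2}. These edge-to-edge tetrahedra are exactly the ones that are tetragonal but in general not unimodular; they are the whole content of the lemma.

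Symmetrically, the tetrahedra you single out as ``the real content'' --- a triangle $F$ of $T_0$ joined to a vertex $(qi,0,0)$ --- are the harmless ones: as already noted in the proof of Lemma~\ref{lemma:toblerone}, a triangle of a full triangulation of the top rectangle is empty, hence unimodular in the planar lattice, and its join with a lattice point at lattice distance one is unimodular; no hypothesis on the paths is needed there, and your claim that $F$ has two edges along a longitude and a latitude is neither true in general nor required. The argument you are missing is the following: for a tetrahedron spanned by the primitive bottom edge $u=(q,0,0)$ and a top edge joining two points on consecutive latitudes, the top edge vector equals $\pm u+m\ell$, so one diagonal of the fundamental parallelogram of the doubled tetrahedron is the vector $m\ell$, parallel to the latitude direction. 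Since no latitude line passes strictly between the two latitudes involved, every lattice point of that parallelogram other than the two remaining corners lies on this diagonal, which is characterization (2) of Corollary~\ref{coro:k=2} and shows the tetrahedron is tetragonal.
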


\begin{proof}
As in the proof of Lemma~\ref{lemma:toblerone}, the tetrahedra obtained joining a triangle in a horizontal plane to a vertex in the 
next one are automatically unimodular. For the others, the condition that the Y-path is almost maximal clearly implies that every tetrahedron obtained, if dilated, has all lattice points in the diagonals of its own fundamental squares. This is one of the characterizations of tetragonal empty tetrahedra, as seen in Corollary~\ref{coro:k=2}.
\end{proof}

We now give without proof  
a fundamental result of Kantor and Sarkaria.
\begin{lemma}[\cite{KantorSarkaria}]
\label{lemma:compatible}
For every $p$ and $q$, the fundamental square has a triangulation that contains both a quasi-maximal X-path and a quasi-maximal Y-path.
\end{lemma}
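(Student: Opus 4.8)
The plan is first to strip the statement down to a problem about two non-crossing paths, and then to build those two paths by an explicit construction following the Euclidean algorithm. For the reduction I use the two-dimensional fact that any set of pairwise non-crossing lattice segments inside a lattice polygon extends to a full (hence, here, unimodular with respect to the relevant planar lattice of determinant $q$) triangulation of the polygon. So it suffices to produce \emph{one} quasi-maximal X-path $\mathcal X$ and \emph{one} quasi-maximal Y-path $\mathcal Y$ in the fundamental square that do not cross each other transversally; any triangulation refining $\mathcal X\cup\mathcal Y$ then proves the lemma. Observe that an X-monotone path from the left to the right edge and a Y-monotone path from the bottom to the top edge of the square must meet, so ``not crossing transversally'' already forces $\mathcal X$ and $\mathcal Y$ to agree along a connected sub-path: the real content of the lemma is to arrange that they can do so.

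For the construction, normalize first: using Corollary~\ref{coro:classification}(2) and the symmetries of the square, assume $p':=-p\bmod q$ satisfies $1\le p'\le q/2$, and list the $q-1$ interior lattice points as $P_i=(ip'\bmod q,\,i)$, $i=1,\dots,q-1$ (one in each row, one in each column). I would take $\mathcal Y$ to be the maximal Y-path $(0,0),P_1,\dots,P_{q-1},(q,q)$: being strictly increasing in $y$ it is automatically simple, hence quasi-maximal. The path $\mathcal X$, and the triangulation carrying it, I would construct by induction on $q$. The base case $p'=1$ is the tetragonal case: the maximal X-path coincides with the maximal Y-path along the diagonal, so we are done by Corollary~\ref{coro:k=2}. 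For the inductive step perform one Euclidean step $q=ap'+r$ (with $0\le r<p'$, so $a\ge 2$): the interior points lie on exactly $p'$ ``special'' latitudes of direction $(p',1)$, each carrying a run of $a$ or $a+1$ of them, and collapsing these runs identifies the combinatorial pattern (up to a controlled number of boundary points) with that of a strictly smaller empty tetrahedron $\tetra(\bar p,p')$ with $\bar p\equiv\pm r^{\pm1}\pmod{p'}$. By the inductive hypothesis the smaller square carries a triangulation with a quasi-maximal X-path and a quasi-maximal Y-path; one lifts this data back, refining each band between consecutive special latitudes by the evident unimodular triangulation of its latitude/longitude parallelograms, takes as $\mathcal X$ the lift of the smaller X-path (interleaved with longitude segments inside the bands), and arranges the lift of the smaller Y-path to be precisely the maximal Y-path $\mathcal Y$ above.

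The main obstacle is the bookkeeping of this descent, which I expect to separate into two verifications. First, the lift of a quasi-maximal path must again be quasi-maximal: a step of $\tetra(\bar p,p')$ that is X-consecutive, or between consecutive longitudes, has to lift to a step of $\tetra(p,q)$ of the same type. This is exactly where Definition~\ref{defi:lati_long} has built-in case distinctions — the latitude, respectively longitude, direction flips according to whether $p'$, respectively $p''$, is below or above $q/2$ — so the argument will fork into the several cases of the Euclidean recursion; this is precisely the flexibility for which the quasi-maximal (rather than maximal) notion was introduced, in the spirit of Lemma~\ref{lemma:quasi-maximal}. Second, one must know that after the lift $\mathcal X$ still does not cross $\mathcal Y$; the cleanest way to guarantee this is to carry along, at every stage of the recursion, an explicit triangulation $T$ of the \emph{whole} fundamental square (the bands triangulated by their parallelogram grids, the inner square by the inductive triangulation) and simply observe that $\mathcal X$ and $\mathcal Y$ are both subcomplexes of $T$, so that compatibility is automatic. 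Once the triple $(T,\mathcal X,\mathcal Y)$ has been produced, the lemma follows.
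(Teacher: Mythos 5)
First, a point of reference: the paper does not prove this lemma at all --- it is stated ``without proof'' as a citation to Kantor--Sarkaria --- so your proposal cannot be measured against an argument in the text; it has to stand on its own. Its overall shape (reduce to two mutually compatible paths and extend to a triangulation; organize the $q-1$ interior points into the $p'$ latitude runs and descend by the Euclidean algorithm) is reasonable and is indeed close in spirit to what Kantor and Sarkaria do. But the proposal has a concrete error and, separately, leaves unproved exactly the steps that carry the content of the lemma.

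The error is the decision to fix $\mathcal Y$ to be the \emph{maximal} Y-path. This is not always possible. Take $q=17$, $p=12$, so $p'=5$ and $p''=7$, both below $q/2$; the latitude direction is $(5,1)$ and the longitude direction is $(1,7)$. The longitude-level functional $g(x,y)=7x-y$ satisfies $g(\Lambda_{p,q})=17\Z$ but $g(5,1)=p'p''-1=34=2q$, so a single latitude step advances \emph{two} longitudes (in general it advances $(p'p''-1)/q$ longitudes, which is $1$ only when $p'p''=q+1$). Consequently no latitude segment can appear in a quasi-maximal X-path, and the maximal Y-path's four ``wrap-around'' edges $(15,3)\text{--}(3,4)$, $(13,6)\text{--}(1,7)$, $(16,10)\text{--}(4,11)$, $(14,13)\text{--}(2,14)$ form barriers: an exhaustive check of the admissible quasi-maximal X-steps (consecutive in $x$, or between consecutive longitudes) shows that every edge which would carry an X-monotone path past these barriers crosses one of them transversally, so the points with $x\ge 10$ are unreachable and no quasi-maximal X-path compatible with the maximal Y-path exists in this square. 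This also exposes an internal inconsistency of your induction: the inductive hypothesis only hands you a \emph{quasi}-maximal Y-path in the smaller square, and its lift skips entire latitude runs whenever that path skips a point, so it cannot be ``arranged to be precisely the maximal Y-path.'' Beyond this, the two ``verifications'' you defer --- that a quasi-maximal step downstairs lifts to a quasi-maximal step upstairs through the case distinctions of Definition~\ref{defi:lati_long}, and that the lifted X- and Y-paths remain non-crossing --- are precisely where the difficulty of the lemma lives; asserting that you ``expect'' them to work out is not a proof. The fix would be to run the Euclidean descent with \emph{both} paths merely quasi-maximal (e.g.\ the X-path visiting one suitably chosen point per longitude), which is what the cited source actually does.
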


\begin{corollary}
For every empty simplex $\tetra(p,q)$ and every dilation factor $k\in \N$, $k\tetra(p,q)$ admits a triangulation into tetragonal tetrahedra and with standard boundary.
\end{corollary}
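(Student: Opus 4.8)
The plan is to rerun the proof of Corollary~\ref{coro:non-standard} almost verbatim, making one substitution: wherever that proof used maximal X- or Y-paths, I instead use \emph{quasi-maximal} ones, drawn from the Kantor--Sarkaria compatible triangulation of the fundamental square guaranteed by Lemma~\ref{lemma:compatible}. Lemma~\ref{lemma:quasi-maximal} then upgrades the conclusion from ``unimodular'' to ``tetragonal'' for every tetrahedron produced, and the extra slack in quasi-maximal paths is exactly what is needed to eliminate the up-to-four stray boundary edges that Corollary~\ref{coro:non-standard} had to tolerate. (Here I assume $k\ge 2$; for $k=1$ an empty simplex admits no proper lattice subdivision, so the statement holds there only when $\tetra(p,q)$ is already tetragonal.)

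Concretely: I first fix one triangulation $T_\star$ of the fundamental square containing both a quasi-maximal X-path and a quasi-maximal Y-path (Lemma~\ref{lemma:compatible}), and impose the suitably rotated or reflected copy of $T_\star$ on every fundamental square occurring in the layered decomposition of $k\tetra(p,q)$ from Section~\ref{sec:non-standard}. I then pick a middle height $c$, slice the part $z\le c$ into toblerone prisms in the X-direction and the part $z\ge c$ into toblerone prisms in the Y-direction, treating the degenerate cases $k\in\{2,3\}$ by the same toblerone mechanism. Prism by prism I extend $T_\star$ via Lemma~\ref{lemma:toblerone}: in each X-toblerone every link of a bottom edge is taken to be the quasi-maximal Y-path of $T_\star$ (Lemma~\ref{lemma:toblerone} permits all links to be the same path), and symmetrically in each Y-toblerone. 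By Lemma~\ref{lemma:quasi-maximal} each tetrahedron thereby obtained is tetragonal (the remaining tetrahedra, a triangle of $T_\star$ coned to a bottom vertex, are even unimodular and hence tetragonal).

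It remains to check that the prism triangulations fit together and are standard on the boundary of $k\tetra(p,q)$. They agree on shared horizontal squares because all carry $T_\star$. On a shared non-horizontal quadrilateral the induced triangulation is, as observed in Section~\ref{sec:non-standard} right after Lemma~\ref{lemma:toblerone}, completely determined by the two vertices to which each bottom edge is coned; routing the quasi-maximal paths so that these vertices are corners of the square forces the standard diagonal triangulation there, which makes the two ``vertical'' facets of $k\tetra(p,q)$ standard. The two ``slanted'' facets are triangulated by the staircase of path data on their fundamental squares; this is precisely where Corollary~\ref{coro:non-standard} lost up to four edges, and the gain is that a quasi-maximal path --- unlike a maximal one --- may be made to begin and end at the corners of its fundamental square, so the boundary paths of the bottom layer, of the top layer, and across the equator $z=c$ can all be taken to run along the facet.

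I expect the main obstacle to be exactly this last piece of boundary bookkeeping: verifying that one and the same choice of $T_\star$, together with the quasi-maximal paths inside it, can be arranged to meet all of the corner and endpoint constraints simultaneously --- at the top layer, at the bottom layer, and across the height-$c$ equator --- so that every edge lying on one of the four facets is parallel to an edge of $\tetra(p,q)$. The other ingredients (interior compatibility, tetragonality via Lemma~\ref{lemma:quasi-maximal}, and the normalized-volume count confirming that one really has a triangulation and not just a dissection) transcribe directly from the arguments of Section~\ref{sec:non-standard}.
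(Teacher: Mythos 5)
Your proposal follows the paper's own route exactly: rerun the construction of Corollary~\ref{coro:non-standard} with every fundamental square carrying the Kantor--Sarkaria triangulation of Lemma~\ref{lemma:compatible}, take all path links quasi-maximal, and invoke Lemma~\ref{lemma:quasi-maximal} for tetragonality, with the slack in quasi-maximal paths absorbing the four stray boundary edges. The paper states this in two sentences where you supply the boundary bookkeeping (and the harmless $k=1$ caveat) explicitly, but the argument is the same.
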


\begin{proof}
Triangulate as in Corollary~\ref{coro:non-standard} except in each fundamental square we choose the triangulation of the previous lemma and choosing all the X-paths and Y-path quasi-maximal. Then by Lemma~\ref{lemma:quasi-maximal} the triangulation obtained has only tetragonal tetrahedra and standard boundary.
\end{proof}

\begin{corollary}
For every empty simplex $\tetra(p,q)$ and every dilation factor $k\in \N$ that is composite, $k\tetra(p,q)$ admits a unimodular triangulation with standard boundary.
\end{corollary}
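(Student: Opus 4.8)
The plan is to reduce to the two facts already in hand: the Corollary just proved, which triangulates $m\tetra(p,q)$ into \emph{tetragonal} empty tetrahedra with standard boundary for every $m$, and Corollary~\ref{coro:tetragonal}, which provides, for every $n\ge 2$, a unimodular triangulation with standard boundary of the $n$-th dilation of any tetragonal tetrahedron. Writing the composite number $k$ as $k=mn$ with $m,n\ge 2$, I would view $k\tetra(p,q)$ as the $n$-th dilation of $m\tetra(p,q)$ and triangulate it piece by piece.

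Concretely, first apply the preceding Corollary to get a triangulation $T$ of $m\tetra(p,q)$ whose cells $\Delta_1,\dots,\Delta_N$ are tetragonal empty tetrahedra and whose restriction to $\partial\big(m\tetra(p,q)\big)$ is standard. Dilating by $n$, the polytope $k\tetra(p,q)=n\cdot\big(m\tetra(p,q)\big)$ is covered by the cells $n\Delta_1,\dots,n\Delta_N$, and these still form a polyhedral complex, meeting face to face since $n\Delta_i\cap n\Delta_j=n(\Delta_i\cap\Delta_j)$ and $T$ is a triangulation. Each $n\Delta_j$ is lattice-equivalent to $n\tetra(1,q_j)$ for a suitable $q_j$ — this is what ``tetragonal'' means, using Corollary~\ref{coro:classification}(2) to identify $\tetra(-1,q)$ with $\tetra(1,q)$ — so, because $n\ge 2$, Corollary~\ref{coro:tetragonal} yields a unimodular triangulation $T_j$ of $n\Delta_j$ with standard boundary.

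Next I would glue the $T_j$. A face shared by $n\Delta_i$ and $n\Delta_j$ is the $n$-dilation of a face $F$ shared by $\Delta_i$ and $\Delta_j$ in $T$; when $F$ is a triangle it is an empty, hence unimodular, lattice $2$-simplex (the only lattice points of the empty tetrahedron $\Delta_i$ lying on $F$ are the three vertices of $F$), so $nF$ has a \emph{unique} unimodular triangulation using only edges parallel to those of $F$, and standardness of $T_i$ and of $T_j$ on the boundary forces both to restrict to it on $nF$. Hence the $T_j$ agree on overlaps and assemble into one unimodular triangulation $T'$ of $k\tetra(p,q)$. Finally, $T'$ is standard on the boundary: a facet of $k\tetra(p,q)$ equals $n(mG)$ for a facet $G$ of $\tetra(p,q)$; in $T$ the dilated facet $mG$ is cut into unimodular triangles with edges parallel to those of $G$, and the standard boundary of each relevant $T_j$ subdivides the $n$-dilation of such a triangle using only edges parallel to its own, hence parallel to those of $G$, hence parallel to edges of $\tetra(p,q)$. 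Thus every boundary edge of $T'$ is parallel to an edge of $\tetra(p,q)$.

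I expect the only delicate point — and it is the ``standard boundary'' gluing philosophy of the introduction rather than a new obstacle — to be the bookkeeping that the individual triangulations $T_j$ are mutually compatible and that standardness of each $\partial(n\Delta_j)$ propagates to standardness of $\partial\big(k\tetra(p,q)\big)$; both follow once one notes that every shared face, and every piece of a boundary facet, is a dilated unimodular triangle carrying a forced standard triangulation.
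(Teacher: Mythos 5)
Your proposal is correct and follows essentially the same route as the paper: factor $k=mn$ with $m,n\ge 2$, triangulate $m\tetra(p,q)$ into tetragonal tetrahedra with standard boundary via the preceding corollary, dilate by $n$, and apply Corollary~\ref{coro:tetragonal} (equivalently, part (5) of Corollary~\ref{coro:k=2}) to each tetragonal piece. The gluing and boundary-standardness bookkeeping you spell out is exactly what the paper leaves implicit, relying on the uniqueness of the standard triangulation of a dilated unimodular triangle.
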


\begin{proof}
Let $k=k_1k_2$ be a factorization of $k$. By the previous Corollary, $k_1\tetra(p,q)$ has a triangulation into tetragonal empty tetrahedra. By a second dilation of factor $k_2$, and applying Corollary~\ref{coro:k=2} to each of the tetragonal tetrahedra obtained, we get a unimodular triangulation of $k\tetra(p,q)$ with standard boundary,
\end{proof}

\subsection{$k=k_1 + k_2$}

\begin{theorem}
\label{thm:k1+k2}
Let $\tetra(p,q)$ be any empty simplex, and let $k=k_1+k_2$. If $k_1\tetra(p,q)$ and $k_2\tetra(p,q)$ both have a 
unimodular triangulation with standard boundary then so does $k\tetra(p,q)$.
\end{theorem}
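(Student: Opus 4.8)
The plan is to slice $k\tetra(p,q)$ horizontally into a lower part of height $k_1$ and an upper part of height $k_2$, triangulate each separately using the given triangulations with standard boundary, and then check that the two triangulations agree on the horizontal cross-section at height $k_1$, so that they glue to a global triangulation of $k\tetra(p,q)$, which is standard on the (vertical-ish) boundary because each piece already is. The subtlety is that the lower part $\tetra(p,q)_{[0,k_1]}$ is \emph{not} itself a dilated simplex — it is a frustum — so we cannot directly invoke the hypothesis on it. What we can do is work with the genuine dilated simplices: the lower part together with the upward cone from the cross-section is (a translate of) $k_1\tetra(p,q)$, and similarly the upper part reflected is $k_2\tetra(p,q)$.

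Concretely, I would proceed as follows. First, observe that the cross-section $k\tetra(p,q)\cap\{z=k_1\}$ is a rectangle which, in the $\Lambda_{p,q}$-coordinates, is lattice-equivalent to a $k_2 q \times k_1 q$ grid of fundamental squares (its side lengths are $k_2 q$ in the $x$-direction and $k_1 q$ in the $y$-direction, since the tetrahedron $\tetra(p,q)$ has $x$-extent decreasing and $y$-extent increasing with height). The standard triangulation of each such fundamental square is uniquely determined (Definition~\ref{defi:standard} and the remark after it), so if both the lower triangulation of $k_1\tetra(p,q)$ and the upper triangulation of $k_2\tetra(p,q)$ restrict to the \emph{standard} triangulation on every fundamental square of this cross-section, they automatically agree there and we may glue. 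So the real content is: a unimodular triangulation of $k_i\tetra(p,q)$ that is standard on the boundary restricts to the standard triangulation on every lattice hyperplane section of the form $\{z=j\}$ as well, or at least on the one relevant section.

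The key step, and the main obstacle, is precisely this last point: standardness on the \emph{boundary} does not obviously force standardness on an \emph{interior} horizontal slice. I would handle it by the same link argument used in Lemma~\ref{lemma:k=2-necessity}(2) and Lemma~\ref{lemma:toblerone}: in a unimodular triangulation of $k_i\tetra(p,q)$ that is standard on the boundary, the triangle of the cross-section $\{z=j\}$ adjacent to a boundary point is forced, and one propagates inward. Alternatively — and this is cleaner — I would not slice an \emph{arbitrary} triangulation but instead, using Theorem~\ref{thm:k1+k2}'s own statement applied recursively together with the composite case, reduce to the situation where the triangulation of $k_i\tetra(p,q)$ is one of the explicitly constructed ones (built layer-by-layer from toblerone prisms as in Corollary~\ref{coro:non-standard} and the composite-$k$ construction), for which each horizontal slice $\{z=j\}$ is visibly a union of standard fundamental squares by construction. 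In fact the simplest route is: place $k_1\tetra(p,q)$ with apex down and $k_2\tetra(p,q)$ with apex up, stacked so their large square facets coincide along $\{z=k_1\}$; each square facet of a standard-boundary triangulation carries the unique standard triangulation of a $m\times n$ grid of fundamental squares, and that triangulation depends only on the grid, not on which tetrahedron it bounds — so the two facet triangulations match identically. The union is then a triangulation of $k\tetra(p,q)$ (all simplices unimodular, meeting face-to-face across $\{z=k_1\}$ and face-to-face internally within each half), and its boundary is standard because the boundary of $k\tetra(p,q)$ is covered by boundary faces of the two halves, each of which is standard by hypothesis. This completes the argument; the only care needed is verifying that the glued cross-section faces meet face-to-face, which follows from the uniqueness of the standard triangulation of a planar grid.
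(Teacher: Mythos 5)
There is a genuine gap, and it is geometric rather than combinatorial: $k\tetra(p,q)$ does \emph{not} decompose into one copy of $k_1\tetra(p,q)$ and one copy of $k_2\tetra(p,q)$ glued along a common cross-section. A volume count already rules this out: the normalized volume of $k\tetra(p,q)$ is $k^3q=(k_1+k_2)^3q$, while your two pieces account for only $(k_1^3+k_2^3)q$, leaving a region of normalized volume $3k_1k_2kq>0$ untriangulated. Concretely, the horizontal slice at $z=k_1$ cuts $k\tetra(p,q)$ into two \emph{wedges} (each the convex hull of a rectangle and a skew edge), neither of which is a dilated copy of $\tetra(p,q)$; and the ``simplest route'' of stacking $k_1\tetra(p,q)$ and $k_2\tetra(p,q)$ along ``their large square facets'' does not parse, because a tetrahedron has no square facet and no apex opposite to one --- in $\tetra(p,q)$ the role of the apex is played by an edge at the top and a skew edge at the bottom. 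You half-notice the problem (``the lower part \dots is a frustum''), but the proposed fix (``the lower part together with the upward cone from the cross-section is a translate of $k_1\tetra(p,q)$'') is false: the lower wedge already strictly contains such a translate, so enlarging it cannot produce one.

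The paper's proof keeps your idea of invoking the hypothesis on two sub-simplices, but places them correctly and, crucially, triangulates the leftover. It cuts off the homothetic copies of $k_1\tetra(p,q)$ and $k_2\tetra(p,q)$ sitting at the two endpoints $(0,0,0)$ and $(kq,0,0)$ of the bottom edge; each of these meets the rest of $k\tetra(p,q)$ in a single dilated \emph{triangular} facet, on which the standard-boundary hypothesis applies verbatim, so no propagation argument from the boundary to interior horizontal slices is needed. The remaining region is then partitioned into horizontal layers and into toblerone prisms, all taken in the Y-direction, and every fundamental square there is triangulated with its maximal X-path; by Lemma~\ref{lemma:toblerone} this gives unimodular triangulations of all these prisms with standard boundary, compatible with one another and with the two cut-off simplices. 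Any correct write-up must contain some version of this middle step; as it stands your argument omits a positive fraction of the polytope.
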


\begin{proof}
We cut from $k\tetra(p,q)$ a copy of $k_1\tetra(p,q)$ incident to the vertex $(0,0,0)$ and a copy of $k_2\tetra(p,q)$ incident to the vertex $(kq,0,0)$. Those parts, by assumption, admit unimodular triangulations with standard boundary. For the rest, we partition into horizontal strips and then into toblerone simplices, as was done in Section~\ref{sec:non-standard}, except now all the toblerones go in the Y-direction. If we triangulate all fundamental squares using their maximal X-path, we get unimodular triangulations of all the toblerones, all with standard boundary (it has to be noted that the bottom-most toblerone is actually a square pyramid, but that does not interfere with the triangulation process). See Figure~\ref{fig:k1+k2}.
\begin{figure}
\includegraphics[scale=.5]{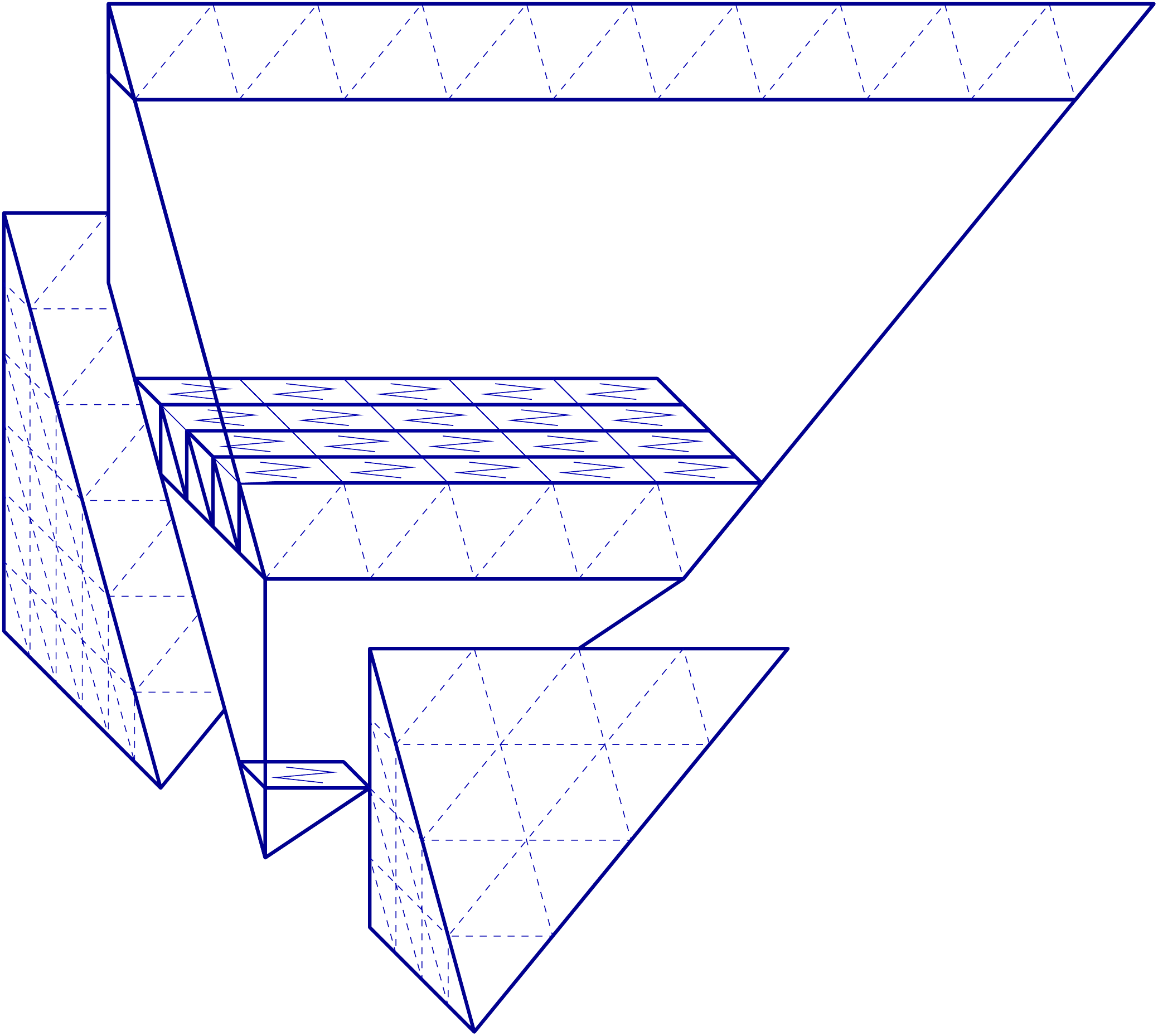}
\caption{Triangulating $(k_1+k_2)\tetra(p,q)$.}
\label{fig:k1+k2}
\end{figure}
\end{proof}

\section{Unimodular triangulations of $kP$ exist for every lattice $3$-polytope $P$ and every $k\in \N\setminus\{1,2,3,5\}$}
\label{sec:semi-standard}

In Section~\ref{sec:standard} we insisted our triangulations of $k\tetra(p,q)$ to have standard boundary because that guarantees that after dilating a triangulation $T$ of a lattice polytope $P$ we can refine each dilated simplex $k\Delta$, with $\Delta\in T$ independently and still get a triangulation of $kP$. In this section we extend this idea as follows: Instead of using the standard triangulation of every dilated triangle, we can use any triangulation that has the full (lattice) symmetries of the triangle, as long as we use the same one in every triangle.
The triangulation that we choose is the following one.

\begin{definition}
\label{quasi-standard}
Let $k\ge 7$ and let $\Gamma$ be a unimodular triangle. We call \emph{quasi-standard} triangulation of $k\Gamma$ the one obtained from the standard one by flipping the six edges whose barycentric coordinates are permutations of $\{\frac{1}{2}, \frac{5}{2}, k-3\}$. See Figure~\ref{fig:quasi-standard}.
\end{definition}
\begin{figure}
\includegraphics[scale=.5]{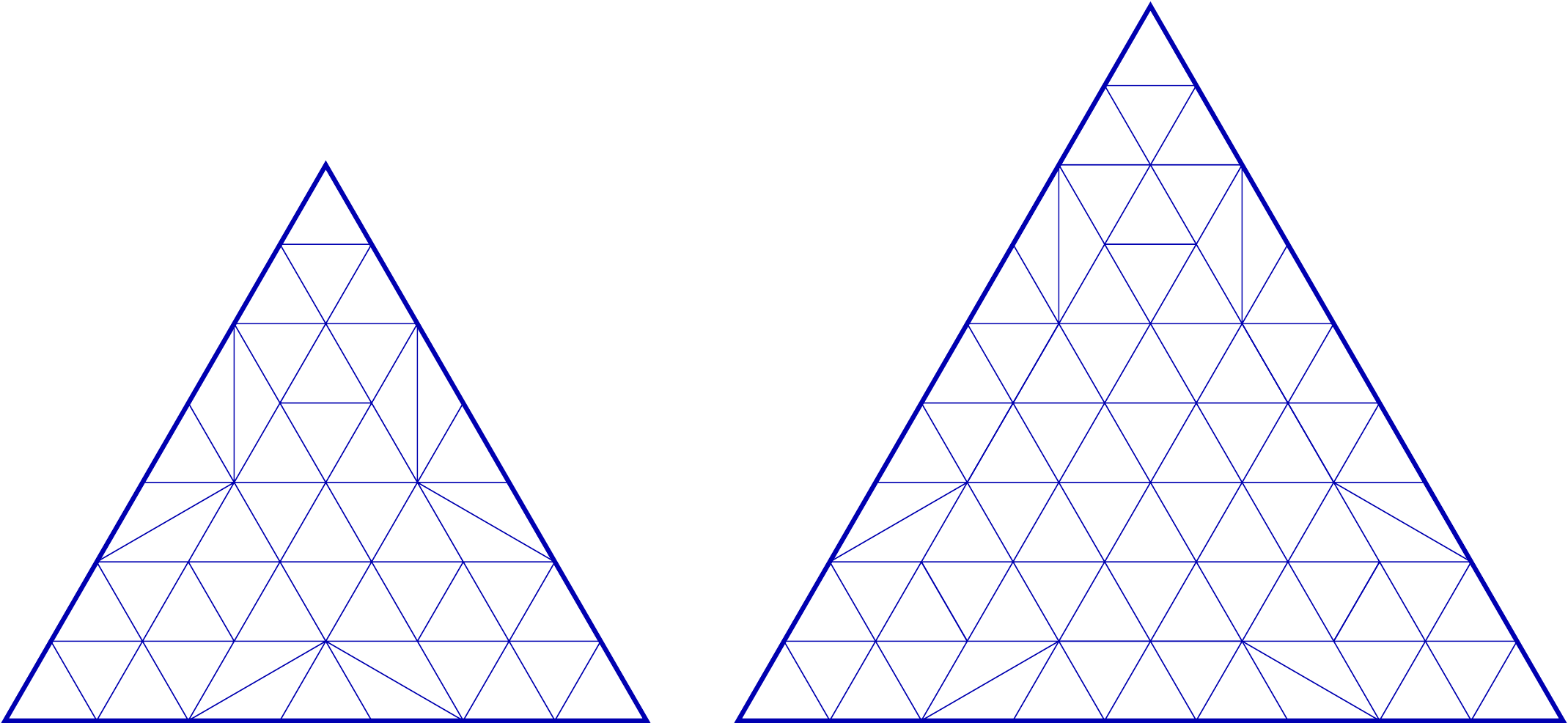}
\caption{Quasi-standard triangulation of a dilated triangle, for $k=7$ and $k=9$.}
\label{fig:quasi-standard}
\end{figure}

\begin{theorem}
\label{thm:quasi-standard}
For every $k\ge 7$ and for every empty tetrahedron $\tetra(p,q)$, $k\tetra(p,q)$ has a unimodular triangulation with quasi-standard boundary.
\end{theorem}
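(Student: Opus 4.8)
The plan is to build the triangulation by the same layer-and-toblerone scheme as in Corollary~\ref{coro:non-standard}, but arranged so that the unavoidable failure of a standard boundary happens \emph{exactly} at the six flips prescribed in Definition~\ref{quasi-standard}. Two elementary observations come first. The standard and the quasi-standard triangulations of a dilated unimodular triangle $k\Gamma$ agree on $\partial(k\Gamma)$: the six flipped edges have all barycentric coordinates strictly positive, hence lie in the relative interior, and flipping an interior edge does not change the induced triangulation of the boundary. Since $\tetra(p,q)$ is empty, each of its four facets is a unimodular triangle, so along the six edges of $k\tetra(p,q)$ the quasi-standard boundary triangulation is the forced unit subdivision, which is automatically consistent between adjacent facets; we therefore only have to realize the quasi-standard triangulation in the relative interior of each facet. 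Secondly, each of the six flips of $k\Gamma$ is supported on a unit parallelogram whose four vertices lie in the ``size-$4$ corner'' at one of the three vertices of $k\Gamma$ (the subtriangle where the dual barycentric coordinate is $\ge k-4$), and for $k\ge 7$ these six parallelograms have pairwise disjoint interiors --- this is precisely where the hypothesis $k\ge 7$ is used.

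Away from the four vertices of $k\tetra(p,q)$ I would proceed exactly as in Corollary~\ref{coro:non-standard}: slice into the $k$ horizontal layers, cut each layer into toblerone prisms, and choose in every fundamental square the ``fully flexible'' triangulation $T_0$ of Lemma~\ref{lemma:arbitrary_boundary} (a cone over one maximal $Y$-path). By that lemma, any toblerone all of whose fundamental squares are of this kind admits a unimodular triangulation whose two non-horizontal faces may be triangulated arbitrarily; using this I make those faces agree across interior walls and equal the \emph{standard} triangulation on the part of $\partial(k\tetra(p,q))$ they cover. Thus the boundary comes out standard everywhere away from the four vertices, which is also what the quasi-standard triangulation looks like there.

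It remains to deal, for each vertex $v$ of $k\tetra(p,q)$, with the region $C_v$ in which the barycentric coordinate dual to $v$ is $\ge k-4$; this region is lattice-congruent to $4\tetra(p,q)$, hence independent of $k$, and the six flips living on the three facets through $v$ are supported inside it (two per facet, on the third layer from $v$, with parallelogram neighbourhoods reaching the fourth layer, which is the interface with the bulk). What is needed is a \emph{single} unimodular triangulation of $4\tetra(p,q)$, for every $p$ and $q$, that restricts to the standard-with-those-two-edges-flipped triangulation on the three facets through its distinguished vertex and to the plain standard triangulation on the fourth facet; producing this is the heart of the matter and, I expect, the main obstacle. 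I would obtain it by starting from the unimodular triangulation of $4\tetra(p,q)$ with standard boundary of Kantor and Sarkaria (Theorem~\ref{thm:standard} with $k=4$) and performing, near the distinguished vertex, the six bistellar flips of pyramid-over-a-parallelogram type that flip exactly those boundary edges --- each such flip being available once one has arranged, by a bounded number of preliminary interior flips if necessary, that the two boundary triangles of a parallelogram are faces of two tetrahedra sharing their common opposite vertex. One must then check that the six flips can be carried out simultaneously (their parallelograms are interior-disjoint for $k\ge 7$) and that the four resulting corner patches, the toblerone triangulation of the bulk, and the forced unit subdivisions of the edges of $k\tetra(p,q)$ fit together into one unimodular triangulation of $k\tetra(p,q)$ with quasi-standard boundary; the smallest cases $k=7$ and $k=8$, where the regions $C_v$ overlap slightly along the edges of $k\tetra(p,q)$, need the extra observation that these overlaps consist only of standard tetrahedra unaffected by any flip and are therefore triangulated consistently.
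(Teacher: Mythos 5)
Your global architecture is genuinely different from the paper's (the paper does not excise corner patches; it runs the layer-and-toblerone construction of Corollary~\ref{coro:non-standard} on all of $k\tetra(p,q)$ with the interface at height $c=3$, chooses a specific pattern of maximal X- and Y-paths on the $3\times(k-3)$ interface squares so that the four unavoidable non-standard boundary edges are four of the desired ones, uses the freedom of Lemma~\ref{lemma:arbitrary_boundary} elsewhere, and then verifies that the eight remaining desired edges can be flipped because each is incident to two boundary triangles coned to one and the same interior point). Your preliminary observations are fine: the flipped edges are interior to each facet, they sit within lattice distance $4$ of a vertex, and their supporting parallelograms have disjoint interiors for $k\ge 7$. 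But the proof has a genuine gap exactly where you flag ``the heart of the matter'': the existence of a unimodular triangulation of $4\tetra(p,q)$ that is standard on one facet and standard-with-two-prescribed-flips on the other three is asserted, not proved. The proposed mechanism --- take the Kantor--Sarkaria triangulation and perform six boundary flips, ``arranging, by a bounded number of preliminary interior flips if necessary,'' that each flip is available --- is not an argument. A boundary edge of a $3$-dimensional triangulation can be flipped only if its two incident boundary triangles are faces of tetrahedra over a common apex, and nothing guarantees this for the Kantor--Sarkaria triangulation near a vertex, nor that it can be arranged by local flips; connectivity and availability of bistellar flips in dimension $3$ is precisely the kind of thing one cannot wave at. The paper never needs such a claim because it \emph{builds} a triangulation in which the required flip configurations are present by construction.

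There is a second, structural problem with the excision scheme itself. The corner region $C_v=\{\lambda_v\ge k-4\}$ is a translate of $4\tetra(p,q)$ whose facets are \emph{not} aligned with the horizontal layers $c\le z\le c+1$: e.g.\ for the top vertices the cut is along $qz-y=q(k-4)$, a tilted plane. So ``proceed exactly as in Corollary~\ref{coro:non-standard} away from the four vertices'' does not parse: the bulk $k\tetra(p,q)\setminus\bigcup_v C_v$ is not a union of layers and toblerones, and you would need a separate argument that this region admits a unimodular triangulation that is standard both on its outer boundary and on the four interface triangles, matching the corner patches. Neither this nor the $k=7,8$ overlap analysis is carried out. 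To repair the proof along the paper's lines, abandon the excision: triangulate all of $k\tetra(p,q)$ by layers with the interface at $c=3$, place maximal Y-paths on the four corner squares and the $k-5$ interior squares of the interface and maximal X-paths on the remaining $2k-8$, use Lemma~\ref{lemma:arbitrary_boundary} for all other horizontal walls to impose the quasi-standard pattern wherever you have free choice, and finish by flipping the eight remaining horizontal boundary edges, which is legitimate there because the construction makes their two incident boundary triangles cones over the same lattice point.
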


\begin{proof}
We triangulate $k\tetra(p,q)$ with the same technique of Corollary~\ref{coro:non-standard}, taking $c=3$: that is, 
we first divide $k\tetra(p,q)$ into $k$ horizontal layers, then divide the bottom three layers into toblerone prisms in the X direction and the upper $k-3$ into toblerone prisms in the Y direction. In the interface between the two parts we triangulate the $3\times (k-3)$ fundamental squares as follows: we use the maximal Y-path in the four corner plus the $k-5$ interior squares, and we use the maximal X-path in the other $2k-8$ squares (see Figure~\ref{fig:alternating2}). 
\begin{figure}
\includegraphics[scale=.8]{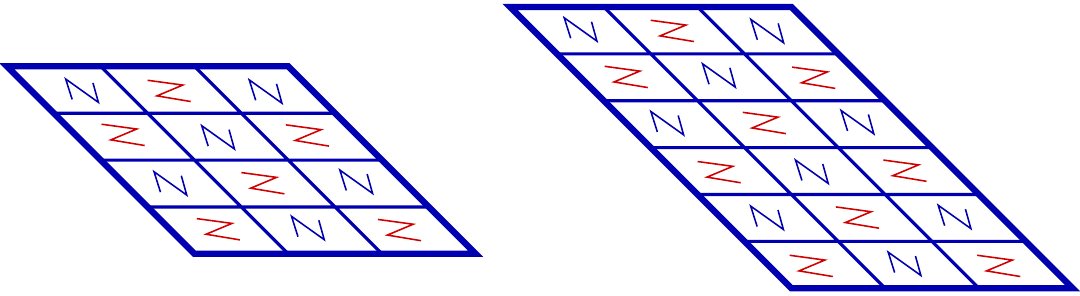}
\caption{The triangulation of the interface between the upper and lower parts of $k\tetra(p,q)$, in the proof of Theorem~\ref{quasi-standard}.}
\label{fig:alternating2}
\end{figure}
Since every row and every column contain squares both using the maximal X-path and the maximal Y-path, all the toblerone prisms incident to this interface can be triangulated unimodularly. Moreover, these unimodular triangulations can be taken so that in the boundary of  $k\tetra(p,q)$ only four non-standard edges arise, but these are precisely the four that we want in order to get a quasi-standard triangulation. In the rest of horizontal planes in between prisms we use the triangulations of Lemma~\ref{lemma:arbitrary_boundary}. Thus we get complete freedom as to how the rest of the boundaries of tob\-lerone prisms get triangulated. We of course choose to triangulate them in accordance to our choice of quasi-standard triangulation, arriving to the situation of Figure~\ref{fig:intermediate}:
\begin{figure}
\includegraphics[scale=.5]{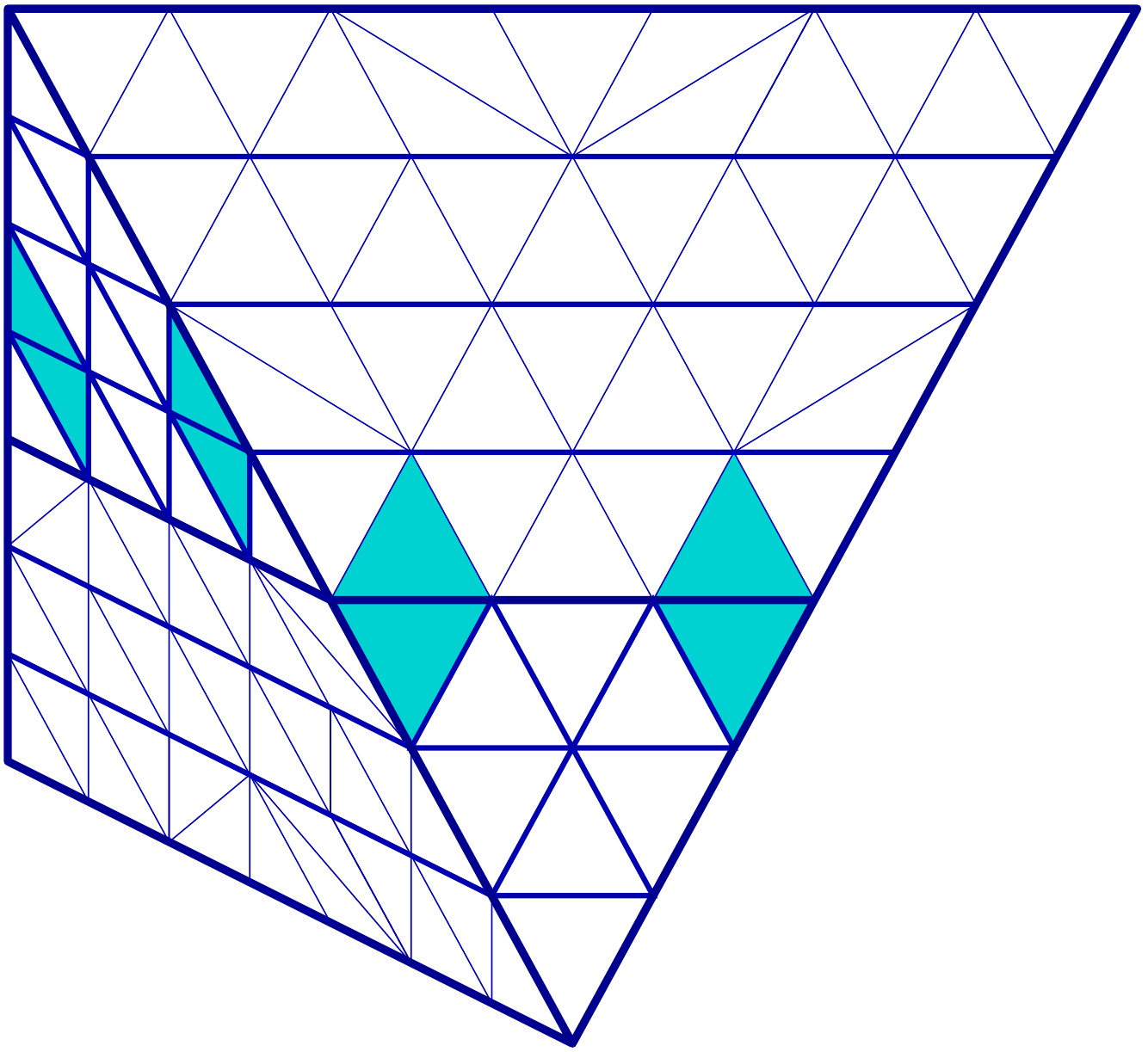}
\caption{The intermediate boundary triangulation in the proof of Theorem~\ref{quasi-standard}.}
\label{fig:intermediate}
\end{figure}
We have a unimodular triangulation of $k\tetra(p,q)$ in which each boundary face has a triangulation that uses four of the six non-standard edges that we want to use. The eight missing ones (two on each facet) are horizontal edges in the triangular faces of two toblerone prisms each. See again Figure~\ref{fig:intermediate}, where the triangles incident to those edges are shaded (only four are seen in the figure; there are another four on the back).

But, as it turns out, those eight edges can be ``flipped'' in our triangulation. Indeed, the two boundary triangles incident to each of them are joined to the same point, namely, the unique point at lattice distance one from those triangles in the corresponding toblerones. By flipping these eight edges we get a unimodular triangulation with quasi-standard boundary.
\end{proof}

\bibliographystyle{plain}

\end{document}